 \let\mathscr\relax
\newcommand{\ideal}[1]{\operatorname{Ideal}[#1]}
\newcommand{\idea}{ \operatorname{Ideal} }
\newcommand{\qmod}[1]{\operatorname{QM}[#1]}
\newcommand{\qm}{\operatorname{QM} }
\newcommand{\pre}[1]{\operatorname{Pre}[#1]}
\newcommand{\po}{\operatorname{Pre}}
\newcommand{\xdt}[1]{x_{\Delta_{#1}}}
\newcommand{\udt}[1]{u_{\Delta_{#1}}}
\newcommand{\re}{\mathbb{R}}
\newcommand{\N}{\mathbb{N}}
\newcommand{\U}{\mathbb{U}}
\newcommand{\diag}{\mbox{diag}}
\newcommand{\lmd}{\lambda}
\newcommand{\eps}{\epsilon}
\newcommand{\Dt}{\Delta}
\def\af{\alpha}
\def\gm{\gamma}
\def\rank{\mbox{rank}}
\newcommand{\sig}{\sigma}
\newcommand{\Sig}{\Sigma}
\newcommand{\st}{\mathrm{s.t.}}
\newcommand{\reff}[1]{(\ref{#1})}
\newcommand{\mc}[1]{\mathcal{#1}}
\newcommand{\ddd}{,\ldots,}
\newcommand{\lip}{\left<}
\newcommand{\rip}{\right>}
\newcommand{\bdes}{\begin{description}}
	\newcommand{\edes}{\end{description}}
\newcommand{\bal}{\begin{align}}
	\newcommand{\eal}{\end{align}}
\newcommand{\bnum}{\begin{enumerate}}
	\newcommand{\enum}{\end{enumerate}}
\newcommand{\bit}{\begin{itemize}}
	\newcommand{\eit}{\end{itemize}}
\newcommand{\bea}{\begin{eqnarray}}
	\newcommand{\eea}{\end{eqnarray}}
\newcommand{\be}{\begin{equation}}
	\newcommand{\ee}{\end{equation}}
\newcommand{\baray}{\begin{array}}
	\newcommand{\earay}{\end{array}}
\newcommand{\bsry}{\begin{subarray}}
	\newcommand{\esry}{\end{subarray}}
\newcommand{\bca}{\begin{cases}}
	\newcommand{\eca}{\end{cases}}
\newcommand{\bcen}{\begin{center}}
	\newcommand{\ecen}{\end{center}}
\newcommand{\bbm}{\begin{bmatrix}}
	\newcommand{\ebm}{\end{bmatrix}}
\newcommand{\bmx}{\begin{matrix}}
	\newcommand{\emx}{\end{matrix}}
\newcommand{\bpm}{\begin{pmatrix}}
	\newcommand{\epm}{\end{pmatrix}}
\newcommand{\btab}{\begin{tabular}}
	\newcommand{\etab}{\end{tabular}}
\theoremstyle{plain}
\newtheorem{theorem}{Theorem}[section]
\newtheorem{thm}[theorem]{Theorem}
\newtheorem{lem}[theorem]{Lemma}
\newtheorem{ass}[theorem]{Assumption}
\theoremstyle{definition}
\newtheorem{eg}[theorem]{Example}
\numberwithin{equation}{section}
\begin{document}

\title[On Tightness of the Sparse Moment-SOS Hierarchy]
{A Characterization for Tightness of the Sparse Moment-SOS Hierarchy}

\author[Jiawang~Nie]{Jiawang Nie}
\author[Zheng~Qu]{Zheng Qu}
\author[Xindong~Tang]{Xindong Tang}
\author[Linghao~Zhang]{Linghao Zhang}

\address{Jiawang Nie and Linghao Zhang,
Department of Mathematics, University of California San Diego,
9500 Gilman Drive, La Jolla, CA, USA, 92093.}
\email{njw@math.ucsd.edu,liz010@ucsd.edu}

\address{Zheng Qu, Department of Applied Mathematics,
The Hong Kong Polytechnic University, Hung Hom, Kowloon, Hong Kong.}
\email{quzheng.qu@polyu.edu.hk}

\address{Xindong Tang, Department of Mathematics,
Hong Kong Baptist University,
Kowloon Tong, Kowloon, Hong Kong.}
\email{xdtang@hkbu.edu.hk}

\date{}

\begin{abstract}
This paper studies the sparse Moment-SOS hierarchy of relaxations
for solving sparse polynomial optimization problems.
We show that this sparse hierarchy is tight if and only if
the objective can be written as a sum of sparse nonnegative polynomials,
each of which belongs to the sum of the ideal and quadratic module
generated by the corresponding sparse constraints.
Based on this characterization, we give several sufficient conditions
for the sparse Moment-SOS hierarchy to be tight.
In particular, we show that this sparse hierarchy is tight
under some assumptions such as convexity,
optimality conditions or finiteness of constraining sets.
\end{abstract}

\keywords{polynomial optimization, sparsity, moment, sum of squares, tight relaxation}

\subjclass[2020]{90C23, 65K10, 90C22}

\maketitle

\section{Introduction}
Let $x \coloneqq (x_1, \ldots, x_n)$ be an $n$-dimensional vector variable in $\re^n$.
Suppose $\Dt_1, \ldots, \Dt_m$ are subsets of $[n] \coloneqq \{1,\ldots, n\}$.
For $\Dt_i = \{j_1, \ldots, j_{n_i} \}$, denote the subvector
$\xdt{i}  \coloneqq  (x_{j_1}, \ldots, x_{j_{n_i}} ).$
We consider the sparse polynomial optimization problem
\be  \label{sparse:pop}
\left\{ \baray{cl}
 \displaystyle \min_{x\in\re^n} &  f(x)  \coloneqq   f_1( \xdt{1} )+\dots + f_m( \xdt{m} ) \\
 \st  &  h_i( \xdt{i} ) = 0, \, g_i( \xdt{i} ) \ge 0, \,  i=1,\ldots, m.\\
\earay \right.
\ee
In the above, each $f_i$ is a polynomial and $h_i, g_i$ are vectors of polynomials in $\xdt{i}$.
We remark that for each $i$, the dimensions for the polynomial vectors $h_i$ and $g_i$ are not necessarily equal. It is also possible that $h_i$ or $g_i$ does not appear for some $i$.
Throughout the paper, a minimizer for \reff{sparse:pop}
means it is a global minimizer.
We denote by $f_{\min}$ the minimum value of \reff{sparse:pop} and denote
\be\label{K_di}
K_{\Dt_i} \coloneqq \{\xdt{i} \in \re^{n_i}: h_i(\xdt{i}) = 0, g_i(\xdt{i}) \ge 0 \}.
\ee
The feasible set $K$ of (\ref{sparse:pop}) is
\[
K = \bigcap\limits_{i = 1 }^{m}\{x \in \re^n : \xdt{i} \in K_{\Dt_i}\}.
\]

General polynomial optimization problems can be solved by
the Moment-SOS hierarchy of semidefinite programming relaxations proposed by Lasserre \cite{Lasserre2001}.
It produces a sequence of lower bounds for $f_{\min}$,
which converges to $f_{\min}$ under the archimedeanness.
Moreover, under some classical sufficient optimality conditions,
the Moment-SOS hierarchy in \cite{Lasserre2001} is {\it tight},
i.e., it has finite convergence \cite{Nie14}.
Throughout the paper, the method in \cite{Lasserre2001}
is called the {\it dense} Moment-SOS hierarchy.
We refer to the books \cite{HKL20,lasserre2015introduction,Lau09,nie2023moment}
about this topic.

The Moment-SOS hierarchy has strong performance for solving polynomial optimization.
However, a concern in its computational practice is that the sizes
of the resulting semidefinite programs grow quickly as the relaxation order increases.
To improve computational efficiency, it is important to exploit sparsity.
In some literature, the sparsity pattern in (\ref{sparse:pop})
is called {\it correlative sparsity} \cite{Lasserre06,waki2006sums},
to be distinguished from {\it term sparsity} \cite{TSSOS,ChTS,CSTSSOS}.
Some sparsity patterns can be given by
arithmetic-geometric mean inequalities
\cite{IlWolf16,MCW2021Newton,MCW2021Sig}.
In this paper, we focus on correlative sparsity.

In this paper, we consider the {\it sparse Moment-SOS hierarchy of semidefinite relaxations}
for solving sparse polynomial optimization.
We refer to (\ref{eq:spa_sos})-(\ref{eq:spa_mom}) in Section~\ref{sc:char}
for the exact formulations of sparse Moment-SOS relaxations.
The sparse relaxation has positive semidefinite (psd) matrix constraints
whose sizes are much smaller than those of the dense relaxation.
The sets $\Delta_1, \ldots, \Delta_m$ are said to satisfy the
running intersection property (RIP) if for every $j = 2, \ldots, m$, it holds
\be\label{rip}
\Delta_{j}  \cap (
\Delta_1 \cup \cdots \cup  \Delta_{j-1} )\subseteq \Delta_t
\ee
for some $t \le j-1$. The geometric meaning of RIP is as follows. Let $G=(V,E)$ be the graph associated with the correlative sparsity pattern $(\Delta_1,\ldots,\Delta_m)$, i.e., $V=\{1,\ldots,n\}$ and $(k_1,k_2)\in E$ if and only if $\{k_1,k_2\}\subseteq \Delta_i$ for some $i$.  The RIP is equivalent to that the sparsity pattern graph $G$ is \textit{chordal}, i.e., all its cycles of length at least four have a chord (an edge is called a chord if it joins two non-adjacent nodes in the cycle). We refer to~\cite{Blair1993,ChTS} for more details on chordal graphs and the RIP.
Let $f_k^{spa}$ and $f_k^{smo}$ denote the optimal values of
\reff{eq:spa_sos} and \reff{eq:spa_mom} respectively.
When the RIP holds and every $K_{\Dt_i}$ satisfies the archimedean condition,
the sequence of $f_k^{spa}$ converges to the minimum value
$f_{\min}$ of \reff{sparse:pop} asymptotically \cite{Kojima09,Lasserre06}.
A convergence rate of the sparse Moment-SOS hierarchy
is given in the recent work \cite{korda2023convergence}.
We refer to \cite{huangkang,KLMS23,Lasserre06,Magron23book,nie2009sparse,Qu2024,waki2006sums}
for related work on the sparse polynomial optimization.
Recently, sparsity has also been exploited to solve
noncommutative polynomial optimization \cite{KMP22,WangMag21}.
Some applications can be found in
\cite{newton2023sparse,wang2022certifying,yang2020perfect,yangdu,zhou2023semidefinite}.
Sparse Moment-SOS relaxations can be implemented in the software
{\tt TSSOS} \cite{vmjw21,TSSOS,CSTSSOS}.

In practice, people often observe that the sparse Moment-SOS hierarchy
has finite convergence, i.e., it is tight.
However, there exist examples for which the dense hierarchy is tight
while the sparse one is not (see \cite{nie2009sparse}).
To the best of the authors' knowledge, there is very little work to characterize
when the sparse Moment-SOS hierarchy is tight.

\subsection*{Contributions}
This paper characterizes tightness of the sparse Moment-SOS hierarchy,
which is given in (\ref{eq:spa_sos})-(\ref{eq:spa_mom}).
Our main contributions are:
\begin{itemize}
\item We give a sufficient and necessary condition for the sparse Moment-SOS hierarchy to be tight.
More precisely, when the optimal value of \reff{eq:spa_sos} is achievable,
we show that $f_k^{spa} = f_{\min}$ if and only if there exist polynomials
$p_i \in \re[\xdt{i}]$ such that (see Section~\ref{sc:pre} for the meaning of notation below)
\be\label{p+f}
\boxed{
\begin{gathered}
 p_1 + \cdots + p_m + f_{\min} = 0,   \\
f_i + p_i \in \idea_{\Dt_i}[h_i]_{2k} + \qm_{\Dt_i}[g_i]_{2k}, \ i \in [m].
\end{gathered}
}
\ee
We remark that the first equation in \reff{p+f} is equivalent to
\[
 f - f_{\min}  \, = \,  (f_1 + p_1) + \cdots + (f_m + p_m) .
\]
	
\item Under certain conditions, we show that the tightness of sparse Moment-SOS relaxations can be certified when flat truncations hold for the minimizer of the sparse moment relaxation \reff{eq:spa_mom}.
This also gives minimizers for \reff{sparse:pop}.
	
\item For convex sparse polynomial optimization problems, we show that the sparse Moment-SOS hierarchy is tight under some general conditions. In particular, we show that the sparse relaxations are tight for all relaxation orders, when \reff{sparse:pop} is an SOS-convex optimization problem.
	
\item Based on the characterization, we prove the sparse Moment-SOS hierarchy is tight when some classical sufficient optimality conditions hold or when each individual equality constraining variety is finite. In particular, when the RIP holds, we show that this sparse hierarchy is tight if each individual constraining variety is finite.
	
\item Based on the characterization, we prove the Schm\"{u}dgen type sparse Moment-SOS hierarchy is tight under some assumptions. In particular, we show that this sparse hierarchy is tight when the RIP holds and each individual constraining set is finite.

\item We remark that this paper is the first work that characterizes tightness of
the sparse Moment-SOS hierarchy.
Also, we give a sparse version of the flat truncation condition to detect tightness and to extract minimizers for (\ref{sparse:pop}).
It is generally hard to check tightness of the sparse Moment-SOS hierarchy \cite{Vargas24}.
There are almost no such results in the prior existing work, except for the case that moment matrices corresponding to intersections of blocks are rank one \cite{Lasserre06}.
Moreover, we give several sufficient conditions for the sparse Moment-SOS hierarchy to be tight.
For the dense case, the analogues of these sufficient conditions are studied in \cite{deklerk2011,Las09,Nie13Finite,Nie14}.
However, the new sufficient conditions given in this paper are not studied in earlier work because the sparse Moment-SOS hierarchy requires additional assumptions for tightness.
In Section~\ref{sc:sufficient}, we show that these new sufficient conditions are satisfied for many cases; see, for instance, Theorems~\ref{tm:convex} and \ref{tm:ripfiniteass}.
We acknowledge that some of our results (e.g., Theorems~\ref{thm:sepa:notattain}, \ref{thm:spar2full:mom}, \ref{tm:finite_sosc}, \ref{tm:finite_variety}, \ref{thm:schm:finitevariety}) apply some techniques developed in the first author's earlier work \cite{nie2013certifying,Nie13Finite,Nie14,nie2023moment}.

\end{itemize}

This paper is organized as follows.
Some basics on polynomial optimization and algebraic geometry are reviewed in Section~\ref{sc:pre}.
In Section~\ref{sc:char}, we give a characterization for the tightness of sparse Moment-SOS hierarchy,
and we study how to certify the tightness and get minimizers.
Section~\ref{sc:sufficient} gives sufficient conditions for the sparse Moment-SOS hierarchy to be tight.
The tightness of Schm\"{u}dgen type sparse Moment-SOS relaxations is investigated in Section~\ref{sc:schmudgen}.
Some numerical experiments are presented in Section~\ref{sc:example}.
Proofs for some theorems in earlier sections are given in Section~\ref{sc:some_proofs}.
Section~\ref{sc:conclusion} draws conclusions and makes some discussions.

\section{Preliminaries}
\label{sc:pre}

{\bf Notation}
Denote by $\N$ the set of nonnegative integers and $\re$ the real field.
For a positive integer $k$, let $[k] \coloneqq \{1\ddd k\}$.
For a subset $\Dt_i \subseteq [n]$, denote by $\re^{\Dt_i}$
the space of real vectors $x_{\Dt_i}$.
Let $\mathfrak{p}_i$ denote the projection from $\re^n$ to $\re^{\Dt_i}$ such that
$\mathfrak{p}_i (x) = \xdt{i}$ for all $x\in \re^n $.
For $i,j \in [n]$, denote $\Dt_{ij} \coloneqq \Dt_{i} \cap \Dt_{j}$ and the projection $\mathfrak{p}_{ij} : \re^{\Dt_i} \rightarrow \re^{\Dt_{ij}}$ be
such that $\mathfrak{p}_{ij}(\xdt{i}) = \xdt{ij}$.
The ring of polynomials in $x = (x_1, \ldots, x_n)$ with real coefficients is denoted as  $\re[x]$.
For a scalar or vector $p$ of polynomials, $\deg(p)$ denotes the maximal degree of its terms.
For a power $\af = (\af_1, \ldots, \af_n)$,
denote the monomial $x^\af \coloneqq x_1^{\af_1} \cdots x_n^{\af_n}$.
For a degree $d$, the subset of polynomials in $\re[x]$
with degrees at most $d$ is denoted as $\re[x]_d$.
We define $\re[x_{\Dt_i}]$ and $\re[x_{\Dt_i}]_d$ similarly.
For $\phi\in\re[x]$ and subsets $A,B\subseteq \re[x]$, denote
\[
\phi  A\, \coloneqq  \, \{ \phi p: p\in A \}, \quad
A + B\, \coloneqq  \, \{ p_1+p_2: p_1\in A,\, p_2\in B \}.
\]
For $f \in \re[x]$, $\nabla f$ denotes its gradient
with respect to $x$,
and $\nabla_{\Dt_i} f$ denotes the gradient
with respect to $x_{\Dt_i}$.
The Hessian matrices $\nabla^2f$ and $\nabla^2_{\Dt_i} f$ are defined similarly.
The Euclidean norm of $x$ is
$\|x\| \coloneqq (|x_1|^2 + \cdots + |x_n|^2)^{1/2}$.
Denote by $e_i$ the canonical basis vector such that the $i$th entry is $1$ and $0$ otherwise.
Let $\mc{S}^n$ be the space of all $n \times n$ real symmetric matrices.
For $X \in \mc{S}^n$, $X \succeq 0$ (resp., $X \succ 0$)
means $X$ is positive semidefinite (resp., positive definite).

\subsection{Ideals and SOS polynomials}
\label{ssc:idqm}

A tuple $h_i \coloneqq (h_{i,1}, \ldots, h_{i,\ell_i})$
of polynomials in $\re[\xdt{i}]$ generates
two ideals respectively in $\re[x_{\Dt_i}]$ and $\re[x]$ as
\[
\baray{rcl}
\idea_{\Dt_i}[h_i] & \coloneqq & h_{i,1} \re[\xdt{i}] + \cdots + h_{i,\ell_i}  \re[\xdt{i}], \\
\ideal{h_i}  & \coloneqq & h_{i,1}  \re[x] + \cdots + h_{i,\ell_i}  \re[x].
\earay
\]
The degree-$2k$ truncation of $\idea_{\Dt_i}[h_i]$ is
\[
\idea_{\Dt_i}[h_i]_{2k} \coloneqq h_{i,1}  \re[\xdt{i}]_{2k-\deg(h_{i,1})} + \cdots + h_{i,\ell_i}  \re[\xdt{i}]_{2k-\deg(h_{i,\ell_i})}.
\]
The truncation $\ideal{h_i}_{2k}$ is similarly defined.
For $h \coloneqq (h_1\ddd h_m)$, with each $h_i$ a tuple of polynomials in $\re[\xdt{i}]$,
we denote
\be\label{eq:idl_spa}
\left\{
\baray{rcl}
\ideal{h}_{spa} & \coloneqq & \idea_{\Dt_1}[h_1] + \cdots + \idea_{\Dt_m}[h_m],\\
\ideal{h}_{spa,2k} & \coloneqq & \idea_{\Dt_1}[h_1]_{2k} + \cdots + \idea_{\Dt_m}[h_m]_{2k}.
\earay
\right.
\ee
For a set $P \subseteq \re[\xdt{i}]$, its {\it real variety} is
\[
V_{\re}(P) \coloneqq \{ \xdt{i} \in \re^{\Dt_i} : p(\xdt{i}) = 0\,\, \forall p\in P \}.
\]
The {\it vanishing ideal} of a subset $V_i \subseteq \re^{\Dt_{i}}$ is
\[
 I(V_i)\coloneqq \{ p \in \re[x_{\Dt_i}] : p(\xdt{i}) = 0\,\,
\forall \xdt{i}\in V_i \}.
\]
Clearly, $P \subseteq I(V_{\re}(P))$.
An ideal $J$ is said to be
{\it real radical} if $J = I(V_{\re}(J))$.

A polynomial $\sigma \in \re[x_{\Dt_i}]$ is said to be a {\it sum of squares} (SOS)
if there exist polynomials $p_1, \ldots, p_s \in \re[x_{\Dt_i}]$ such that
$\sigma = p_1^2 + \cdots + p_s^2$.
The cone of all SOS polynomials in $\re[\xdt{i}]$ is denoted as
$\Sigma[\xdt{i}]$, and its degree-$2k$ truncation is
\[
\Sigma[\xdt{i}]_{2k} \coloneqq  \Sigma[\xdt{i}] \cap \re[\xdt{i}]_{2k}.
\]
We define $\Sig[x]$ and $\Sig[x]_{2k}$ similarly.
A tuple $g_i \coloneqq (g_{i,1}, \ldots, g_{i,s_i})$
of polynomials in $\re[\xdt{i}]$ generates
the quadratic modules in $\re[\xdt{i}]$ and $\re[x]$  respectively as
\[
\baray{rcl}
\qm_{\Dt_i}[g_i] &\coloneqq&  \Sig[\xdt{i}] + g_{i,1} \Sig[\xdt{i}] + \cdots + g_{i,s_i} \Sig[\xdt{i}], \\
\qm[g_i] &\coloneqq&  \Sig[x] + g_{i,1} \Sig[x] + \cdots + g_{i,s_i} \Sig[x] .
\earay
\]
The preorderings of $g_i$ in $\re[\xdt{i}]$ and $\re[x]$ are respectively
\[
\po_{\Dt_i}[g_i] \, \coloneqq \sum_{J \subseteq [s_i] }   \prod_{j \in J} g_{i,j}   \Sig[\xdt{i}], \quad
\po[g_i] \, \coloneqq \sum_{J \subseteq [s_i] }   \prod_{j \in J} g_{i,j}   \Sig[x].
\]
The product in the above is $1$ if $J$ is empty.
For an even degree $2k$, we denote the truncation ($g_{i,0} = 1$)
\[
\qm_{\Dt_i}[g_i]_{2k} \coloneqq \Big\{ \sum_{j=0}^{s_i} \sig_jg_{i,j} :
\sig_j \in \Sig[\xdt{i}], \deg(\sig_jg_{i,j}) \le 2k \Big\}.
\]
The truncation $\po_{\Dt_i}[g_i]_{2k}$ is defined similarly.
For $g \coloneqq (g_1\ddd g_m)$ with each $g_i$
a tuple of polynomials in $\re[\xdt{i}]$, we denote
\be\label{eq:qm_spa}
\left\{
\begin{array}{rcl}
\qm[g]_{spa} & \coloneqq & \qm_{\Dt_1}[g_1] + \cdots + \qm_{\Dt_m}[g_m],\\
\po[g]_{spa} & \coloneqq & \po_{\Dt_1}[g_1] + \ldots + \po_{\Dt_m}[g_m],\\
\qm[g]_{spa,2k} & \coloneqq & \qm_{\Dt_1}[g_1]_{2k} + \cdots + \qm_{\Dt_m}[g_m]_{2k}, \\
\po[g]_{spa,2k} & \coloneqq & \po_{\Dt_1}[g_1]_{2k} + \ldots + \po_{\Dt_m}[g_m]_{2k}.
\end{array}
\right.
\ee

The set $\idea_{\Dt_i}{[h_i]} + \qm_{\Dt_i}{[g_i]}$ is said to be {\it archimedean}
if it contains a polynomial $q$ such that
the set of all points $\xdt{i}$ satisfying $q(\xdt{i})\ge0$ is compact.
When $\idea_{\Dt_i}{[h_i]} + \qm_{\Dt_i}{[g_i]}$ is archimedean,
the set $K_{\Dt_i}$ must be compact.
The converse is not necessarily true.
However, if $K_{\Dt_i}$ is compact, i.e., there exists $R>0$ such that
$\Vert \xdt{i}\Vert^2  \le R$ for all $\xdt{i}\in K_{\Dt_i}$, then
$\idea_{\Dt_i}{[h_i]} + \qm_{\Dt_i}[g_i, R- \| \xdt{i} \|^2]$ is archimedean.
The following lemma is useful.

\begin{lem}  \label{lm:sep}
Let $f = f_{1}+\dots+f_m$ be such that each $f_i\in\re[\xdt{i}]$ and let
$G = G_1 + \cdots + G_m$ be such that each
$G_i \subseteq \re[\xdt{i}]$.
Then, for a given $\gamma\in\re$, it holds $f-\gamma \in G$
if and only if there exist polynomials $p_i \in \re[\xdt{i}]$ such that
\be\label{eq:sep_pi}
\boxed{
\begin{gathered}
p_1  + \cdots + p_m  + \gamma  =0 , \\
f_i + p_i \in G_i,\, i = 1, \ldots, m.
\end{gathered}
}
\ee
\end{lem}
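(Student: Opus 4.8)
The statement is an equivalence, so the plan is to prove the two implications separately. Both reduce to a single observation: the sumset decomposition $G = G_1 + \cdots + G_m$ respects the variable groupings, because each $f_i$ and each $G_i$ live in the \emph{same} subring $\re[\xdt{i}]$. This matching is what makes the correction polynomials $p_i$ stay in the right ring.

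For the direction assuming \reff{eq:sep_pi}, I would argue directly. Given $p_i \in \re[\xdt{i}]$ with $p_1 + \cdots + p_m + \gamma = 0$ and $f_i + p_i \in G_i$, I substitute $\gamma = -(p_1 + \cdots + p_m)$ to obtain
\[
f - \gamma = (f_1 + \cdots + f_m) + (p_1 + \cdots + p_m) = (f_1 + p_1) + \cdots + (f_m + p_m).
\]
Each summand $f_i + p_i$ lies in $G_i$, so the right-hand side lies in $G_1 + \cdots + G_m = G$ by the definition of the sumset. Hence $f - \gamma \in G$.

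For the converse, suppose $f - \gamma \in G$. By the definition $G = G_1 + \cdots + G_m$ together with the convention $A + B = \{a + b : a \in A,\, b \in B\}$ from Section~\ref{sc:pre}, there exist $q_i \in G_i \subseteq \re[\xdt{i}]$ with $f - \gamma = q_1 + \cdots + q_m$. I would then simply set $p_i := q_i - f_i$. Since both $q_i$ and $f_i$ lie in $\re[\xdt{i}]$, this choice keeps $p_i \in \re[\xdt{i}]$, and $f_i + p_i = q_i \in G_i$ yields the second line of \reff{eq:sep_pi}. For the first line,
\[
p_1 + \cdots + p_m + \gamma = (q_1 + \cdots + q_m) - (f_1 + \cdots + f_m) + \gamma = (f - \gamma) - f + \gamma = 0.
\]

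There is no serious obstacle here; the entire content is the observation that because both $f_i$ and the summand $q_i$ of any $G$-decomposition live in $\re[\xdt{i}]$, the correction $p_i = q_i - f_i$ remains in $\re[\xdt{i}]$ and absorbs exactly the discrepancy $f_i - q_i$, leaving the constant $\gamma$ to be balanced by the telescoping sum. The only point requiring care is to invoke the definition of the sumset $G_1 + \cdots + G_m$ so that a decomposition $f - \gamma = \sum_i q_i$ with $q_i \in G_i$ is available; everything else is bookkeeping. I note that no degree or truncation hypothesis enters the argument, so the lemma applies verbatim with $G_i = \idea_{\Dt_i}[h_i]_{2k} + \qm_{\Dt_i}[g_i]_{2k}$, which is precisely the setting of \reff{p+f}.
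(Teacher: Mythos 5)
Your proof is correct and takes essentially the same route as the paper: both extract a decomposition $f-\gamma = q_1 + \cdots + q_m$ with $q_i \in G_i$, define $p_i \coloneqq q_i - f_i$ (the paper writes $s_i - f_i$), and verify the two conditions by the same telescoping computation. The only difference is cosmetic: you write out the ``if'' direction explicitly, which the paper dismisses as straightforward.
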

\begin{proof}
The ``if" implication is straightforward.
For the ``only if'' implication, suppose
$f-\gamma \in G,$
then there exist $s_i\in G_i$ satisfying
\be\label{eq:f-fmin=phi+s}
f - \gm = s_1+\cdots+s_m.
\ee
For each $i$, let
$p_i\,\coloneqq\, s_i-f_i.$
Since $f_i,s_i\in \re[\xdt{i}]$, we have $p_i \in \re[\xdt{i}]$ and
\[\begin{aligned}
p_1+\cdots + p_m & = s_1+\cdots+s_m - (f_1+\cdots+f_m) \\
& = s_1+\cdots+s_m - f = -\gm.
\end{aligned}  \]
Therefore, $p_1+\cdots +p_m+\gm = 0$ and $f_i+p_i = s_i\in G_i$ for all $i$.
So, \reff{eq:sep_pi} holds.
\end{proof}

The feasible set of the sparse optimization problem \reff{sparse:pop} is
\be \label{set:K}
K = \left\{x \in \re^n
\left| \baray{c}
h_i(\xdt{i}) = 0, \  g_i(\xdt{i}) \ge 0, \\ i = 1, \ldots, m
\earay \right.
\right\}.
\ee
In some applications, one is interested in a certificate for $K$ to be empty.
The Positivstellensatz (see \cite{BCR98}) for $K = \emptyset$ is
\be\label{positiv}
1 + \sig + \phi = 0,
\ee
where $\sig \in \pre{g} \coloneqq \sum_{i=1}^{m} \pre{g_i}$ and $\phi \in \ideal{h} \coloneqq \sum_{i=1}^{m} \ideal{h_i}$.
The Positivstellensatz \reff{positiv} is said to be {\it sparse} if
\reff{positiv} holds for $\sig \in \pre{g}_{spa}$ and $\phi \in \ideal{h}_{spa}$.

\begin{thm}\label{tm:sparse_empty}
Let $K$ be the set as in \reff{set:K}. 
Then a sparse version of Positivstellensatz \reff{positiv}
holds if and only if there exist polynomials
$p_i \in \re[\xdt{i}]$ such that
\be\label{sparse:positiv}
\boxed{
\baray{c}
p_1  + \cdots + p_m  =0 , \\
-1 + p_i \in \idea_{\Dt_i}[h_i] + \po_{\Dt_i}[g_i], \,\, i \in [m].
\earay
}
\ee
\end{thm}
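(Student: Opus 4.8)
The plan is to read this theorem as essentially a restatement of the separation Lemma~\ref{lm:sep}, modulo a harmless rescaling. First I would unwind the definition of the sparse Positivstellensatz. By \reff{eq:idl_spa} and \reff{eq:qm_spa}, requiring \reff{positiv} to hold with $\sig \in \pre{g}_{spa}$ and $\phi \in \ideal{h}_{spa}$ is the same as requiring
\[
-1 = \sig + \phi \in G \coloneqq \pre{g}_{spa} + \ideal{h}_{spa} = \sum_{i=1}^m G_i, \qquad G_i \coloneqq \idea_{\Dt_i}[h_i] + \po_{\Dt_i}[g_i],
\]
where each $G_i \subseteq \re[\xdt{i}]$. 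Hence the sparse version of \reff{positiv} holds if and only if $-1 \in G$, and this is the statement I would aim to convert into \reff{sparse:positiv}.

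Next I would note that $G$ is a cone stable under multiplication by nonnegative reals: each preordering $\po_{\Dt_i}[g_i]$ is preserved under scaling by $\lmd \ge 0$ because $\lmd \Sig[\xdt{i}] \subseteq \Sig[\xdt{i}]$, and each ideal $\idea_{\Dt_i}[h_i]$ is preserved under scaling by any constant. Consequently $-1 \in G$ if and only if $-m \in G$ (multiply by $m$ in one direction and by $1/m$ in the other).

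With these two reductions in hand, I would apply Lemma~\ref{lm:sep} to the trivial decomposition $f = f_1 + \cdots + f_m$ with $f_i \coloneqq -1 \in \re[\xdt{i}]$, the sets $G_i$ above, and $\gm \coloneqq 0$. Since $f - \gm = -m$, the lemma yields that $-m \in G$ if and only if there exist $p_i \in \re[\xdt{i}]$ with $p_1 + \cdots + p_m + 0 = 0$ and $f_i + p_i = -1 + p_i \in G_i$ for every $i$, which is precisely \reff{sparse:positiv}. Chaining the equivalences, the sparse version of \reff{positiv} holds $\iff -1 \in G \iff -m \in G \iff$ condition \reff{sparse:positiv} holds, and this completes the argument.

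The only delicate point, and the sole obstacle, is the constant-term bookkeeping: the natural choice $f_i = -1$ forces $f = -m$ rather than $-1$, so the cone/rescaling step is genuinely needed to pass from $-m \in G$ back to the single constant appearing in \reff{positiv}. There are no analytic or archimedean issues here, since no positivity certificate of bounded degree is being asserted; the whole statement is a formal consequence of Lemma~\ref{lm:sep} together with the cone structure of $G$.
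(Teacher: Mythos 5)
Your proposal is correct and is essentially the paper's own proof: the paper likewise deduces the theorem from Lemma~\ref{lm:sep} applied with $\gm = 0$, $f_i = -1$, and $G_i = \idea_{\Dt_i}[h_i] + \po_{\Dt_i}[g_i]$. Your rescaling step is a welcome extra bit of care, since with these choices the lemma literally certifies $-m \in G$ rather than $-1 \in G$, and the paper's one-line proof leaves this normalization (handled by your cone observation that $\lmd G \subseteq G$ for $\lmd \ge 0$) implicit.
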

\begin{proof}
The conclusion follows from Lemma~\ref{lm:sep} with $\gm = 0$,
$f_i = -1$ and $G_i = \idea_{\Dt_i}[h_i]+\po_{\Dt_i}[g_i]$.
\end{proof}

\subsection{Sparse moments}
\label{ssc:sparmom}

Denote the set of monomial powers in $\xdt{i}$ as
\[
\N^{\Dt_i}\coloneqq \{\af = (\alpha_1\ddd \alpha_{n})\in\N^n :
\alpha_j = 0\,\, \forall j\notin \Dt_{i}\}.
\]
For a degree $d$, denote
$
\N_{d}^{\Dt_i} \coloneqq \{\af\in \N^{\Dt_i}:   |\af| \le d \},
$
where $|\af| \coloneqq \af_1 + \cdots + \af_{n}$.
The vector of all monomials in $\xdt{i}$ in the graded lexicographic order
with degrees up to $d$ is denoted as $[\xdt{i}]_d$, i.e.,
\be \label{[xDti]d}
[\xdt{i}]_d \, = \,  \big( x^\af \big)_{ \af \in \N_{d}^{\Dt_i}   }.
\ee
Denote by $\re^{\N_d^{\Dt_i}}$
the space of all real vectors labeled by $\af \in \mathbb{N}_d^{\Dt_i}$.
A vector $y_{\Dt_i}$ in $\re^{\N_{d}^{\Dt_i}}$
is called a truncated multi-sequence (tms) of degree $d$.
For a given $y_{\Dt_i}\in \re^{\mathbb{N}_d^{\Dt_i}}$,
the {\it Riesz functional} generated by $y_{\Dt_i}$
is the linear functional $\mathscr{R}_{y_{\Dt_i}}$ acting on $\re[\xdt{i}]$ such that
\[
\mathscr{R}_{y_{\Dt_i}}(x^{\alpha}) \, = \,
(y_{\Dt_i} )_{\alpha} \quad \mbox{for each} \quad \alpha\in \mathbb{N}_d^{\Dt_i} .
\]
This induces the bilinear operation
\be \label{<p,yDt>}
\langle p, y_{\Dt_i} \rangle \coloneqq \mathscr{R}_{y_{\Dt_i}}(p).
\ee
The {\it localizing vector} and the {\it localizing matrix} of $p\in\re[\xdt{i}]$,
generated by $y_{\Dt_i}$, are respectively
\[
\baray{rcl}
\mathscr{V}_{p}^{\Dt_i,2k}[y_{\Dt_i}] &\coloneqq& \mathscr{R}_{y_{\Dt_i}}\left( p(\xdt{i})  [\xdt{i}]_{k_1} \right), \\
L_{p}^{\Dt_i,k}[y_{\Dt_i}] &\coloneqq& \mathscr{R}_{y_{\Dt_i}}\left( p(\xdt{i})  [\xdt{i}]_{k_2}[\xdt{i}]^T_{k_2} \right).
\earay
\]
In some literature, the localizing matrix $L_{p}^{\Dt_i,k}[y_{\Dt_i}]$ is also denoted as $M_{k-\lceil \deg(p)/2 \rceil}(py_{\Dt_i})$; see \cite{Lasserre06,lasserre2015introduction,Magron23book,CSTSSOS}.
In the above, the Riesz functional is applied entry-wise and
\[
k_1 \coloneqq 2k-\deg(p), \quad k_2 \coloneqq \lfloor k-\deg(p)/2 \rfloor.
\]
In particular, when $p=1$ is the constant polynomial, we get the {\it moment matrix}
\[
M_{\Dt_i}^{(k)}[y_{\Dt_i}] \coloneqq L_{1}^{\Dt_i,k}[y_{\Dt_i}].
\]
Recall that $h_i = (h_{i,1}, \ldots, h_{i,\ell_i})$ and $g_i \coloneqq (g_{i,1}, \ldots, g_{i,s_i})$ are tuples of polynomials in $\re[\xdt{i}]$. We denote
\[
\baray{rcl}
\mathscr{V}_{h_i}^{\Dt_i, 2k}[y_{\Dt_{i}}] &\coloneqq&
  (\mathscr{V}_{h_{i,1}}^{\Dt_i,2k}[y_{\Dt_i}], \ldots,
\mathscr{V}_{h_{i,\ell_i}}^{\Dt_i,2k}[y_{\Dt_i}]), \\
L_{g_i}^{\Dt_i, k}[y_{\Dt_{i}}] &\coloneqq& \diag
   \left( L_{g_{i,1}}^{\Dt_i,k}[y_{\Dt_i}] \ddd L_{g_{i,s_i}}^{\Dt_i,k}[y_{\Dt_i}] \right).
\earay
\]
For a given $k$, denote the monomial power set
\be\label{u}
\U_k \coloneqq \bigcup_{i \in [m]}  \mathbb{N}_{2k}^{\Dt_i}.
\ee
Let $\re^{\U_k}$ denote the space of real vectors
$
y  =  (y_{\af})_{ \af \in \U_k }.
$
For given $y\in\re^{\U_k}$, we denote the subvector
\be \label{yDti}
y_{\Dt_i} \, \coloneqq \, (y_{\af})_{ \af \in \mathbb{N}_{2k}^{\Dt_i} }.
\ee
For the objective $f$ in \reff{sparse:pop}, we denote
\be \label{<f,y>}
\langle f, y \rangle  \, \coloneqq \,
\langle f_1, y_{\Dt_1} \rangle + \cdots + \langle f_m, y_{\Dt_m} \rangle .
\ee
For a degree $t \le k$, the $2t$-degree truncation of $y_{\Dt_i}$ is the subvector
\be \label{yDti:2t}
y_{\Dt_i}|_{2t} \, \coloneqq \, (y_{\af})_{ \af \in \mathbb{N}_{2t}^{\Dt_i} }.
\ee
The sparse localizing vectors/matrices are denoted as
\be \label{loc:mat:vec}
\left\{
\baray{rcl}
\mathscr{V}_{h_i}^{spa, 2k}[y] & \coloneqq & \mathscr{V}_{h_i}^{\Dt_i,2k}[y_{\Dt_i}], \\
L_{g_i}^{spa, k}[y] & \coloneqq &  L_{g_i}^{\Dt_i,k}[y_{\Dt_i}], \\
M^{(k)}_{spa}[y] &\coloneqq&  \diag \left(M_{\Dt_1}^{(k)}[y_{\Dt_1}] \ddd
M_{\Dt_m}^{(k)}[y_{\Dt_m}] \right).
\earay
\right.
\ee
We refer to \cite{HKL20,lasserre2015introduction,Lau09,nie2023moment}
for more detailed introductions to polynomial optimization.

\section{The characterization for tightness}
\label{sc:char}

In this section, we characterize when the sparse Moment-SOS hierarchy is tight
for solving \reff{sparse:pop}.
Let
\be \label{deg:k0}
k_0 \coloneqq \max_{i\in[m]} \left\{ \lceil \deg(f)/2\rceil ,
\lceil \deg(g_i)/2\rceil ,  \lceil \deg(h_i)/2 \rceil  \right\} .
\ee
For a degree $k\ge k_0$, the $k$th order sparse SOS relaxation for \reff{sparse:pop} is
\be \label{eq:spa_sos}
\left\{ \baray{ccl}
f_k^{spa}  \coloneqq  &  \max &  \gamma  \\
& \st  &  f - \gamma  \in  \ideal{h}_{spa, 2k} + \qmod{g}_{spa, 2k}.
\earay \right.
\ee
Its dual optimization problem is the sparse moment relaxation
\be\label{eq:spa_mom}
\left\{ \baray{rcl}
f_k^{smo}  \coloneqq   & \min &  \langle f, y \rangle = \langle f_1, y_{\Dt_1} \rangle + \cdots + \langle f_m, y_{\Dt_m} \rangle \\
&  \st   &  \mathscr{V}_{h_i}^{spa, 2k}[y] = 0, L_{g_i}^{spa, k}[y] \succeq 0 \ ( i \in [m]), \\
&     &  M_{spa}^{(k)}[y]\succeq 0, y_0=1, \, y \in \re^{\U_k}.
\earay \right.
\ee
We refer to Subsections \ref{ssc:idqm} and \ref{ssc:sparmom}
for the above notation.
Recall that $f_{\min}$ denotes the minimum value of \reff{sparse:pop}.
When the RIP holds, if each $\idea_{\Dt_i}{[h_i]} + \qm_{\Dt_i}{[g_i]}$ is archimedean,
then $f_k^{spa} \rightarrow f_{\min}$ as $k \rightarrow \infty$,
as shown in \cite{Lasserre06}.
When $f_k^{spa} = f_{\min}$ for some $k$, the hierarchy of \reff{eq:spa_sos}
is said to be tight. Similarly, if $f_k^{smo} = f_{\min}$ for some $k$,
then the hierarchy of \reff{eq:spa_mom} is tight.
If they are both tight, the sparse Moment-SOS hierarchy of
\reff{eq:spa_sos}-\reff{eq:spa_mom} is said to be tight,
or to have finite convergence.

\subsection{Characterization for the tightness}

First, we give a sufficient and necessary condition for the sparse SOS hierarchy
of \reff{eq:spa_sos} to be tight.
We remark that the sparse Moment-SOS hierarchy of \reff{eq:spa_sos}-\reff{eq:spa_mom}
is tight if and only if the sparse SOS hierarchy of~\reff{eq:spa_sos} is tight.
The following is the main theorem.

\begin{thm} \label{thm:sepa:match}
For the sparse Moment-SOS hierarchy of
\reff{eq:spa_sos}-\reff{eq:spa_mom}, we have:
\begin{enumerate}[(i)]
\item For a relaxation order $k \ge k_0$, it holds
\be
\label{eq:f-fmin_in_IQ_2k}
f-f_{\min} \in \ideal{h}_{spa,2k} + \qmod{g}_{spa,2k}
\ee
if and only if there exist polynomials
$p_i \in \re[\xdt{i}]_{2k}$ such that
\be
\label{eq:p+fmin_in_idl}
\boxed{
\begin{gathered}
p_1  + \cdots + p_m  + f_{\min}  =0, \\
f_i + p_i \in \idea_{\Dt_i}[h_i]_{2k} + \qm_{\Dt_i}[g_i]_{2k}, \ i \in [m].
\end{gathered}
}
\ee
The equation in the above is equivalent to that
\[
f - f_{\min}  \, = \, (f_1 + p_1)  + \cdots + (f_m + p_m ).
\]

\item When \reff{eq:p+fmin_in_idl} holds for some order $k$,
the minimum value $f_{\min}$ of (\ref{sparse:pop})
is achievable if and only if all sparse polynomials $f_i + p_i$
have a common zero in $K$, i.e., there exists $u\in K$ such that
$f_i(\udt{i})+ p_i(\udt{i})=0$ for all $i\in[m]$.

\end{enumerate}
\end{thm}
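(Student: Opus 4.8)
The plan is to obtain part (i) as an immediate consequence of Lemma~\ref{lm:sep}, and then to establish part (ii) through a pointwise nonnegativity argument on $K$.

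For part (i), I would invoke Lemma~\ref{lm:sep} with $\gamma = f_{\min}$, the given summands $f_i$, and $G_i \coloneqq \idea_{\Dt_i}[h_i]_{2k} + \qm_{\Dt_i}[g_i]_{2k}$. By the definitions in \reff{eq:idl_spa} and \reff{eq:qm_spa}, the aggregate $G \coloneqq G_1 + \cdots + G_m$ is precisely $\ideal{h}_{spa,2k} + \qmod{g}_{spa,2k}$, so the membership \reff{eq:f-fmin_in_IQ_2k} is exactly the hypothesis $f - f_{\min} \in G$ of the lemma. The lemma then produces polynomials $p_i \in \re[\xdt{i}]$ satisfying \reff{eq:p+fmin_in_idl}, and conversely any such $p_i$ recombine to give $f - f_{\min} \in G$. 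The single bookkeeping point is the degree bound $p_i \in \re[\xdt{i}]_{2k}$: in the lemma one sets $p_i = s_i - f_i$ with $s_i \in G_i \subseteq \re[\xdt{i}]_{2k}$, and since $\deg(f_i) \le \deg(f) \le 2 k_0 \le 2k$, we get $\deg(p_i) \le 2k$. The stated reformulation $f - f_{\min} = \sum_{i}(f_i + p_i)$ is just a transcription of the first line of \reff{eq:p+fmin_in_idl}.

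For part (ii), the crucial fact is that each $f_i + p_i$ is nonnegative on $K_{\Dt_i}$. Indeed, writing $f_i + p_i = \phi_i + \sigma_i$ with $\phi_i \in \idea_{\Dt_i}[h_i]_{2k}$ and $\sigma_i \in \qm_{\Dt_i}[g_i]_{2k}$, the ideal part $\phi_i$ vanishes wherever $h_i = 0$ and the quadratic-module part $\sigma_i$ is nonnegative wherever $g_i \ge 0$; hence $(f_i + p_i)(\xdt{i}) \ge 0$ for every $\xdt{i} \in K_{\Dt_i}$. Combining this with the decomposition from part (i), for every $x \in K$ we obtain
\[
f(x) - f_{\min} = \sum_{i=1}^m (f_i + p_i)(\xdt{i}),
\]
a sum of nonnegative terms.

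The equivalence in part (ii) then follows from the elementary observation that a finite sum of nonnegative reals vanishes if and only if every summand does. If $u \in K$ is a common zero of all the $f_i + p_i$, then $f(u) - f_{\min} = 0$, so $f_{\min}$ is attained at $u$. Conversely, if $f_{\min}$ is attained at some $u \in K$, then $\sum_i (f_i + p_i)(\udt{i}) = 0$; since each summand is nonnegative on $K_{\Dt_i}$, each must be zero, exhibiting the common zero. I do not expect a genuine obstacle here: part (i) is essentially a reindexing of Lemma~\ref{lm:sep}, and part (ii) is a soft nonnegativity argument. The only step warranting care is verifying that the truncated objects $\idea_{\Dt_i}[h_i]_{2k}$ and $\qm_{\Dt_i}[g_i]_{2k}$ retain the expected vanishing and sign behavior on $K_{\Dt_i}$, which is immediate from their generators.
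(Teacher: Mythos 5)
Your proposal is correct and takes essentially the same route as the paper: part (i) is exactly the paper's instantiation of Lemma~\ref{lm:sep} with $\gamma = f_{\min}$ and $G_i = \idea_{\Dt_i}[h_i]_{2k} + \qm_{\Dt_i}[g_i]_{2k}$, and part (ii) is the same argument that $f - f_{\min} = \sum_i (f_i + p_i)$ is a sum of terms nonnegative on $K$, so it vanishes at a point of $K$ if and only if every summand does. Your explicit degree bookkeeping for $p_i \in \re[\xdt{i}]_{2k}$ and the justification of why each $f_i + p_i \ge 0$ on $K_{\Dt_i}$ are details the paper leaves implicit, but they match its intent.
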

\begin{proof}
(i) Let $\gm = f_{\min}$,
$G = \ideal{h}_{spa,2k} + \qm[g]_{spa, 2k}$ and
\[
G_i = \idea_{\Dt_i}[h_i]_{2k} +\qm_{\Dt_{i}}[g_i]_{2k}, \, \, i=1,\ldots, m.
\]
Then, the conclusion follows from Lemma~\ref{lm:sep}.

\smallskip
\noindent (ii) Assume \reff{eq:p+fmin_in_idl} holds for some order $k$. \\
($\Rightarrow$): Suppose $f_{\min}$ is achievable for \reff{sparse:pop},
then there exists a minimizer $u \in K$ such that $f_{\min} = f(u)$. Note
\begin{eqnarray*}
f(x)-f_{\min} &=& - \Big[ f_{\min}+\sum_{i=1}^m p_i(\xdt{i}) \Big] +
  \sum_{i=1}^m [f_i(\xdt{i}) + p_i(\xdt{i})] \\
& \in &  \ideal{h}_{spa,2k} + \qm[g]_{spa,2k}.
\end{eqnarray*}
Since $p_1 + \dots + p_m +f_{\min} = 0,$  we have
\[
\sum_{i=1}^m (f_i(\udt{i}) + p_i(\udt{i})) = 0.
\]
Since each $f_i(\udt{i}) + p_i(\udt{i}) \ge 0$ on $K_{\Dt_i}$,
we have
$
f_i(\udt{i}) + p_i(\udt{i}) = 0
$
for all $i \in [m]$. Therefore,
$u \in K$ is a common zero of all $f_i+p_i$.

\smallskip
\noindent
($\Leftarrow$): Suppose $u\in K$ is a common zero of all $f_i+p_i$, then
\[
\sum\limits_{i=1}^{m} f_i(\udt{i})+ p_i(\udt{i}) = f(u) + p_1(\udt{1}) + \cdots + p_m(\udt{m}) = 0.
\]
Since $p_1  + \cdots + p_m  + f_{\min} =0$,
$f(u) = f_{\min}$, so $f_{\min}$ is achievable.
\end{proof}

Theorem~\ref{thm:sepa:match} gives a sufficient and necessary condition
for the membership \reff{eq:f-fmin_in_IQ_2k},
which implies the tightness $f^{spa}_{k} = f_{\min}$.
When the optimal value of \reff{eq:spa_sos} is achievable,
$f^{spa}_k =f_{\min}$ if and only if \reff{eq:f-fmin_in_IQ_2k} holds.
When \reff{eq:spa_sos} does not achieve its optimal value,
we may not have (\ref{eq:f-fmin_in_IQ_2k}),
even if the relaxation \reff{eq:spa_sos} is tight.
For such cases, we have the following characterization
for the tightness $f^{spa}_k =f_{\min}$.

\begin{thm}  \label{thm:sepa:notattain}
The $k$th order sparse SOS relaxation \reff{eq:spa_sos}
is tight (i.e., $f_{\min} = f^{spa}_k$)
if and only if for every $\epsilon>0$, there exist
polynomials $p_i \in \re[\xdt{i}]_{2k}$ such that
\be  \label{eq:sep_pi_eps}
\boxed{
\begin{gathered}
p_1  + \cdots + p_m  + f_{\min}  = 0,\\
f_i + p_i +\epsilon \in \idea_{\Dt_i}[h_i]_{2k} + \qm_{\Dt_i}[g_i]_{2k},\ i \in [m].
\end{gathered}
}
\ee
\end{thm}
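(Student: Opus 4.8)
The plan is to reduce the ``tightness'' statement $f_{\min}=f^{spa}_k$ to a statement about membership in a fixed cone, by approximating from below. First I would prove the easier ``if'' direction. Suppose for every $\epsilon>0$ there exist $p_i\in\re[\xdt{i}]_{2k}$ satisfying \reff{eq:sep_pi_eps}. Applying Lemma~\ref{lm:sep} with $\gamma=f_{\min}-\epsilon$, $f_i$ unchanged, and $G_i=\idea_{\Dt_i}[h_i]_{2k}+\qm_{\Dt_i}[g_i]_{2k}$, the two conditions in \reff{eq:sep_pi_eps} are exactly equivalent to
\[
f-(f_{\min}-\epsilon)\in\ideal{h}_{spa,2k}+\qmod{g}_{spa,2k},
\]
which means $\gamma=f_{\min}-\epsilon$ is feasible for the sparse SOS relaxation \reff{eq:spa_sos}. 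Hence $f^{spa}_k\ge f_{\min}-\epsilon$ for every $\epsilon>0$, so $f^{spa}_k\ge f_{\min}$. Since $f^{spa}_k\le f_{\min}$ always holds (the relaxation gives a lower bound, because any feasible $\gamma$ satisfies $f-\gamma\ge0$ on $K$ so $\gamma\le f_{\min}$), we conclude $f^{spa}_k=f_{\min}$.

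For the ``only if'' direction, I would argue in the reverse order. Assume $f^{spa}_k=f_{\min}$. Fix any $\epsilon>0$. Because $f^{spa}_k=f_{\min}$ is the supremum of the feasible $\gamma$'s in \reff{eq:spa_sos}, the value $\gamma=f_{\min}-\epsilon<f^{spa}_k$ is strictly below the optimal value, hence it is feasible for \reff{eq:spa_sos}; that is,
\[
f-(f_{\min}-\epsilon)\in\ideal{h}_{spa,2k}+\qmod{g}_{spa,2k}.
\]
Now I would invoke Lemma~\ref{lm:sep} again, this time in the ``only if'' direction, with $\gamma=f_{\min}-\epsilon$ and the same splitting $G=\ideal{h}_{spa,2k}+\qmod{g}_{spa,2k}$, $G_i=\idea_{\Dt_i}[h_i]_{2k}+\qm_{\Dt_i}[g_i]_{2k}$. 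The lemma produces polynomials $q_i\in\re[\xdt{i}]$ with $q_1+\cdots+q_m+(f_{\min}-\epsilon)=0$ and $f_i+q_i\in G_i$ for each $i$. Setting $p_i\coloneqq q_i$ for $i\ge2$ and absorbing the $\epsilon$ into the first block by writing $p_1\coloneqq q_1+\epsilon$ (or, more symmetrically, redistributing so that $f_i+p_i+\epsilon\in G_i$ while $\sum_i p_i+f_{\min}=0$), I recover exactly the two conditions of \reff{eq:sep_pi_eps}. A small bookkeeping point is to make sure the $\epsilon$ is attached to each block as written in \reff{eq:sep_pi_eps}: since the constant $m\epsilon$ versus $\epsilon$ only rescales the parameter, one replaces $\epsilon$ by $\epsilon/m$ at the start, or equivalently notes that the family of conditions over all $\epsilon>0$ is unchanged under positive rescaling.

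The only genuinely delicate point is the degree bound $p_i\in\re[\xdt{i}]_{2k}$. I expect this to be the main obstacle: Lemma~\ref{lm:sep} as stated returns $p_i\in\re[\xdt{i}]$ with no degree control, so I must verify that the truncation structure forces $\deg(p_i)\le 2k$. This follows because each $s_i=f_i+p_i$ lies in the truncated cone $G_i=\idea_{\Dt_i}[h_i]_{2k}+\qm_{\Dt_i}[g_i]_{2k}$, every element of which has degree at most $2k$ by definition of the truncations; since $\deg(f_i)\le\deg(f)\le 2k_0\le 2k$, the difference $p_i=s_i-f_i$ automatically satisfies $\deg(p_i)\le 2k$. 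I would state this degree check explicitly rather than leave it implicit, as it is exactly the feature distinguishing the truncated setting from the full version in Theorem~\ref{tm:sparse_empty}. With these pieces assembled, both implications close and the equivalence follows.
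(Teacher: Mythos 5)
Your proposal follows essentially the same route as the paper's proof: both directions come down to Lemma~\ref{lm:sep} applied with $G_i = \idea_{\Dt_i}[h_i]_{2k} + \qm_{\Dt_i}[g_i]_{2k}$, plus the fact that the feasible set of \reff{eq:spa_sos} is downward closed, and your explicit verification of the degree bound $p_i \in \re[\xdt{i}]_{2k}$ is a point the paper leaves implicit. The skeleton is sound, but two of your intermediate claims are wrong as written and need repair.

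First, in the ``if'' direction, \reff{eq:sep_pi_eps} is \emph{not} exactly equivalent to feasibility of $\gamma = f_{\min}-\epsilon$ in \reff{eq:spa_sos}: summing the $m$ block memberships and using $p_1+\cdots+p_m = -f_{\min}$ gives
\[
f - (f_{\min} - m\epsilon) \;=\; \sum_{i=1}^m \big(f_i + p_i + \epsilon\big) \;\in\; \ideal{h}_{spa,2k} + \qmod{g}_{spa,2k},
\]
so the feasible value produced is $f_{\min}-m\epsilon$, not $f_{\min}-\epsilon$. As you note, this costs nothing since $\epsilon>0$ is arbitrary, but the pointwise ``exact equivalence'' claim should be dropped. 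Second, in the ``only if'' direction your absorption has the wrong sign: Lemma~\ref{lm:sep} with $\gamma = f_{\min}-\epsilon$ yields $q_1+\cdots+q_m+f_{\min}-\epsilon=0$, so setting $p_1 \coloneqq q_1+\epsilon$ gives $\sum_i p_i + f_{\min} = 2\epsilon \neq 0$; the constant must be subtracted, e.g.\ $p_i \coloneqq q_i - \epsilon/m$ for all $i$. Then $\sum_i p_i + f_{\min} = 0$ and $f_i + p_i + \epsilon = (f_i+q_i) + (1-\tfrac{1}{m})\epsilon \in G_i$, where the last step uses that nonnegative constants lie in $\qm_{\Dt_i}[g_i]_{2k}$ and $G_i$ is closed under addition --- a fact you should state explicitly, since for every block the membership does not follow from $f_i+q_i\in G_i$ alone. (The same fact is what justifies your terse step ``strictly below the optimal value, hence feasible.'') The paper avoids this redistribution altogether by applying Lemma~\ref{lm:sep} with $\gamma = f_{\min}$ to the shifted blocks $f_i^{\epsilon} \coloneqq f_i+\epsilon$, which outputs \reff{eq:sep_pi_eps} directly; that bookkeeping is cleaner and you may want to adopt it.
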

\begin{proof}
($\Leftarrow$): Suppose \reff{eq:sep_pi_eps} holds for some polynomials
$p_i \in  \re[\xdt{i}]_{2k}$, then
\begin{eqnarray*}
f - (f_{\min} - m\eps) &=& -\Big[ f_{\min} + \sum_{i=1}^{m}p_i \Big] +
    \sum_{i=1}^{m} (f_i+p_i+\eps) \\
& \in & \ideal{h}_{spa, 2k} + \qm[g]_{spa, 2k}.
\end{eqnarray*}
This means that $\gm = f_{\min} - m\eps$ is feasible for the $k$th order sparse SOS relaxation
\reff{eq:spa_sos}, so $f_{k}^{spa} \ge f_{\min}-m\eps$.
Since $\eps > 0$ is arbitrary, we get $f_{k}^{spa} \ge f_{\min}$.
On the other hand, we always have $f_{k}^{spa} \le f_{\min}$,
so $f_{k}^{spa} = f_{\min}$.

\medskip \noindent
($\Rightarrow$): For an arbitrary $\eps>0$, let
\[
f^{\epsilon}_i(\xdt{i}) \, \coloneqq \,  f_i(\xdt{i})+\eps.
\]
Suppose the $k$th order relaxation \reff{eq:spa_sos} is tight.
Then, every $\gm < f_{\min} = f_{k}^{spa}$
is feasible for \reff{eq:spa_sos}.
This is because for every $\eps > 0$, there exists a feasible $\hat{\gm}$
such that $\hat{\gm} > f_{\min} - m\eps$.
So, $f - \hat{\gm} \in \ideal{h}_{spa, 2k} + \qmod{g}_{spa, 2k}$ and
\[
f- (f_{\min} - m\eps) = f - \hat{\gm} + [\hat{\gm} - (f_{\min} - m\eps)]
\in \ideal{h}_{spa, 2k} + \qmod{g}_{spa, 2k} ,
\]
since $\hat{\gm} - (f_{\min} - m\eps) > 0$.
Thus, $\gm = f_{\min} - m\eps$ is feasible for \reff{eq:spa_sos}, and
\[
f(x) - (f_{\min} - m\epsilon) = \sum_{i=1}^mf^{\epsilon}_i-f_{\min}
\in \ideal{h}_{spa,2k} + \qmod{g}_{spa,2k}.
\]
We apply Lemma~\ref{lm:sep} with $\gm = f_{\min}$ and $f_i = f_i^{\eps}$,
then there exist polynomials $p_i  \in \re[\xdt{i}]_{2k}$
such that $p_1 + \cdots + p_m + f_{\min} =0$ and for all $i$,
\[
f_i^{\eps} + p_i = f_i + p_i + \eps \in
\idea_{\Dt_{i}}[h_i]_{2k} + \qm_{\Dt_{i}}[g_i]_{2k}  .
\]
So, \reff{eq:sep_pi_eps} holds.
\end{proof}

\subsection{Detecting tightness and extracting minimizers}

We now discuss how to detect tightness of sparse Moment-SOS relaxations
\reff{eq:spa_sos}-\reff{eq:spa_mom}.
The tightness of \reff{eq:spa_sos} can be detected
if there exists a feasible point $u$
such that $f_{k}^{spa} = f(u)$; similarly,
the tightness of \reff{eq:spa_mom}
can be detected by $f_{k}^{smo} = f(u)$.
This issue is related to how to extract a minimizer
for \reff{sparse:pop} from the sparse moment relaxations.

For each $i \in [m]$, denote the degrees
\be\label{degreedi}
\baray{rcl}
d_i &\coloneqq & \max\{\lceil \deg(h_i) / 2 \rceil, \lceil \deg(g_i) / 2 \rceil\} .
\earay
\ee
Suppose $y^*$ is a minimizer of \reff{eq:spa_mom}
for a relaxation order $k \ge k_0$.
To extract a minimizer, we typically need to consider a representing measure
for the subvector $y^*_{\Dt_i}$.
Suppose there exists a degree $t \in [k_0,k]$ such that
\be \label{decomp_i}
\boxed{
\begin{gathered}
{y^*_{\Dt_{i}} |_{2t} = \lambda_{i,1}[u^{(i,1)}]_{2t} + \cdots + \lambda_{i,r_i}[u^{(i,r_i)}]_{2t},} \\
\lambda_{i,1} > 0, \ldots, \lambda_{i,r_i} > 0, \quad
\lambda_{i,1} + \cdots + \lambda_{i,r_i} = 1
\end{gathered}
}
\ee
for some points $u^{(i,j)} \in K_{\Dt_i}$. We refer to
\reff{[xDti]d}, \reff{yDti} and \reff{yDti:2t}
for the above notation. Denote the set
\be\label{suppxdti}
\mathcal{X}_{\Dt_i} \coloneqq \{u^{(i,1)}, \ldots, u^{(i,r_i)} \}.
\ee

\begin{thm} \label{optmin}
Suppose $y^*$ is a minimizer of \reff{eq:spa_mom} and there exists $t\in[k_0,k]$ such that \reff{decomp_i} holds for all $i\in[m]$.
Let $x^*=(x_1^*,\,\cdots,\,x_n^*)$ be a point such that each
$x^*_{\Dt_i}\in \mathcal{X}_{\Dt_i}$.
If $f_t^{smo} =f_k^{smo}$, then $x^*$
is a minimizer of \reff{sparse:pop}.
\end{thm}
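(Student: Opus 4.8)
The plan is to reduce the whole statement to the single equality $f(x^*)=f_k^{smo}$. Indeed, since each $x^*_{\Dt_i}\in\mathcal{X}_{\Dt_i}\subseteq K_{\Dt_i}$, the point $x^*$ lies in $K$, so it is feasible and $f_{\min}\le f(x^*)$, while $f_k^{smo}\le f_{\min}$ always holds; hence once we show $f(x^*)=f_k^{smo}$ we obtain the chain $f_{\min}\le f(x^*)=f_k^{smo}\le f_{\min}$, which forces $f(x^*)=f_{\min}$ (so $x^*$ is a minimizer) and simultaneously certifies tightness. As preparation I would first record that the objective only sees low-degree data: because $\deg f_i\le 2k_0\le 2t$, the pairing $\langle f_i,y^*_{\Dt_i}\rangle$ depends only on $y^*_{\Dt_i}|_{2t}$, so \reff{decomp_i} gives $\langle f_i,y^*_{\Dt_i}\rangle=\sum_j\lambda_{i,j}f_i(u^{(i,j)})$ and therefore $f_k^{smo}=\sum_{i,j}\lambda_{i,j}f_i(u^{(i,j)})$. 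I would also note that the restriction $z$ of $y^*$ to $\re^{\U_t}$ is feasible for the order-$t$ relaxation \reff{eq:spa_mom} (each $z_{\Dt_i}$ is the moment vector of the atomic measure $\mu_i\coloneqq\sum_j\lambda_{i,j}\delta_{u^{(i,j)}}$ supported in $K_{\Dt_i}$), with $\langle f,z\rangle=f_k^{smo}$, so the hypothesis $f_t^{smo}=f_k^{smo}$ makes $z$ optimal at order $t$.

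The heart of the argument is the inequality $f(x^*)\le f_{\min}$, which I would obtain on the dual side via complementary slackness. Applying Theorem~\ref{thm:sepa:match}(i) I would produce polynomials $p_i\in\re[\xdt{i}]_{2k}$ satisfying the polynomial identity $p_1+\cdots+p_m+f_{\min}=0$ together with $f_i+p_i\in\idea_{\Dt_i}[h_i]_{2k}+\qm_{\Dt_i}[g_i]_{2k}$. Writing $f_i+p_i=\phi_i+q_i$ with $\phi_i$ in the truncated ideal and $q_i$ in the truncated quadratic module, feasibility of $y^*_{\Dt_i}$ at order $k$ gives $\langle\phi_i,y^*_{\Dt_i}\rangle=0$ and $\langle q_i,y^*_{\Dt_i}\rangle\ge 0$, hence $\langle f_i+p_i,y^*_{\Dt_i}\rangle\ge 0$ for every $i$. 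Summing and using $\sum_i p_i=-f_{\min}$ with $y^*_0=1$ yields $\sum_i\langle f_i+p_i,y^*_{\Dt_i}\rangle=f_k^{smo}-f_{\min}\le 0$, so each nonnegative summand must vanish and $f_k^{smo}=f_{\min}$. Since $f_i+p_i\ge 0$ on $K_{\Dt_i}$ and $y^*_{\Dt_i}|_{2t}$ is represented by $\mu_i$ supported on $\mathcal{X}_{\Dt_i}$, the vanishing localizes to the support, giving $f_i(u^{(i,j)})+p_i(u^{(i,j)})=0$ at every atom. In particular $f_i(x^*_{\Dt_i})=-p_i(x^*_{\Dt_i})$ for each $i$, and evaluating the identity $\sum_i p_i(\xdt{i})=-f_{\min}$ at $x^*$ gives the clean conclusion $f(x^*)=-\sum_i p_i(x^*_{\Dt_i})=f_{\min}$.

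The main obstacle is securing the certificate $p_i$ and legitimately transferring the complementary-slackness vanishing down to the individual atoms. Producing $p_i$ through Theorem~\ref{thm:sepa:match} presupposes tightness of the SOS side, whereas only the moment-side hypotheses are given and a duality gap between \reff{eq:spa_sos} and \reff{eq:spa_mom} is a priori possible; I would handle this by first upgrading $f_t^{smo}=f_k^{smo}$ and \reff{decomp_i} into tightness of the moment relaxation (sandwiching values using feasibility of $z$ at order $t$ and of the Dirac vector $\hat y$ with $\hat y_{\Dt_i}=[x^*_{\Dt_i}]_{2t}$, which is well defined on the overlaps $\Dt_{ij}$ since $x^*$ is a single point) and only then invoking the characterization. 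The second delicate point is the degree mismatch: $p_i$ may have degree up to $2k$, so passing from $\langle f_i+p_i,y^*_{\Dt_i}\rangle=0$ to vanishing of $f_i+p_i$ on $\mathcal{X}_{\Dt_i}$ requires reconciling the order-$k$ pairing with the degree-$2t$ representing measure $\mu_i$; I expect to resolve this by exploiting the flat structure of $y^*_{\Dt_i}$ to show that the term-by-term nonnegative contributions of $q_i$ each vanish on the support of $\mu_i$. The alternative, certificate-free route is to prove directly that $f_i$ is constant on each $\mathcal{X}_{\Dt_i}$ (equivalent to $f(x^*)=f_k^{smo}$ for an arbitrary assembly), but this forces one to exploit optimality of $z$ against the block-consistency constraints on the overlaps, which is precisely the subtlety that the dual certificate sidesteps.
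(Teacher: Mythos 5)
The heart of your argument---producing certificates $p_i$ via Theorem~\ref{thm:sepa:match}(i) and running complementary slackness---cannot be carried out under the hypotheses of Theorem~\ref{optmin}. Theorem~\ref{thm:sepa:match}(i) does not \emph{produce} the membership \reff{eq:f-fmin_in_IQ_2k}; it only converts it into the system \reff{eq:p+fmin_in_idl}. That membership amounts to tightness of the SOS relaxation \reff{eq:spa_sos} \emph{with attainment}, neither of which follows from the purely moment-side hypotheses you are given (a duality gap between \reff{eq:spa_sos} and \reff{eq:spa_mom} is possible, and attainment can fail even without a gap---that is precisely why Theorem~\ref{thm:sepa:notattain} exists). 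The paper's own Example~\ref{ex:extraction-not-minimizer} makes this failure concrete: there \reff{decomp_i} holds and $f_t^{smo}=f_k^{smo}=\tfrac{9}{2}$ for all $2\le t\le k$, yet $f_k^{spa}=f_k^{smo}=\tfrac{9}{2}<9=f_{\min}$, so \reff{eq:f-fmin_in_IQ_2k} fails and no certificates $p_i$ exist. Worse, note that your slackness computation $\sum_i\langle f_i+p_i,y^*_{\Dt_i}\rangle=f_k^{smo}-f_{\min}\ge 0$ never uses the existence of the assembled point $x^*$; if certificates followed from the other hypotheses, it would prove $f_k^{smo}=f_{\min}$ in that example, which is false. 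So any correct proof must use $x^*$ itself in an essential way, and the certificate route gives it no role.

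Your proposed repair is circular at exactly this point. You say you would first ``upgrade'' the hypotheses to moment-side tightness by ``sandwiching'' with the feasible vectors $z=y^*|_{spa,2t}$ and the Dirac vector $\hat y$ at $x^*$. But feasibility of $\hat y$ at order $t$ only yields $f_t^{smo}\le\langle f,\hat y\rangle=f(x^*)$, the trivial direction; the inequality you need, $f(x^*)\le f_t^{smo}$, is the entire content of the theorem. The paper gets it by a convex-splitting argument you never formulate: with $\rho\coloneqq\min_i\rho_i$ the smallest weight of the atom $x^*_{\Dt_i}$ in \reff{decomp_i}, write $y^*|_{spa,2t}=\rho\hat y+(1-\rho)\tilde y$ and check that $\tilde y$ is feasible at order $t$ (blockwise it is the moment vector of the nonnegative measure $\sum_j\lambda_{i,j}\delta_{u^{(i,j)}}-\rho\,\delta_{x^*_{\Dt_i}}$ supported in $K_{\Dt_i}$); since $f_t^{smo}=f_k^{smo}$ makes $y^*|_{spa,2t}$ optimal at order $t$, both $\langle f,\hat y\rangle\ge f_t^{smo}$ and $\langle f,\tilde y\rangle\ge f_t^{smo}$ must be equalities, whence $f(x^*)=f_t^{smo}\le f_{\min}\le f(x^*)$. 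This is exactly the ``certificate-free route'' you mention in your last sentence and decline to develop---it is not a subtlety the dual certificate sidesteps, it is the proof. Finally, even granting the certificates, your own degree-mismatch worry is fatal rather than technical: \reff{decomp_i} only constrains $y^*|_{2t}$ and does not make $y^*$ flat, so there is no justification for integrating a degree-$2k$ certificate against the degree-$2t$ representing measure.
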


The proof of Theorem~\ref{optmin} is given in Subsection~\ref{pf_optmin}.
For the dense Moment-SOS hierarchy (i.e., $m=1$),
the tightness is guaranteed if there exists a degree $t\in [k_0,k]$ such that (\ref{decomp_i}) holds.
This is because for the dense case, we have (let $r\coloneqq r_i$, $u^{(j)} \coloneqq u^{(1,j)}$ and $\lmd^{(j)} \coloneqq \lmd^{(1,j)}$ for notational convenience)
\[ f(u^{(j)}) \ge f_{\min} \ge  \lip f,y^*|_{2k} \rip = \lip f,y^*|_{2t} \rip = \lmd_1 f(u^{(1)}) + \cdots + \lmd_r f(u^{(r)}) , \]
which implies
\[ f(u^{(1)}) = \cdots = f(u^{(r)}) = \lip f,y^*|_{2k} \rip = f_{\min}. \]
However, for the sparse Moment-SOS hierarchy of (\ref{eq:spa_sos})-(\ref{eq:spa_mom}), even if (\ref{decomp_i}) holds for all $i$, we may not have $f^{smo}_k = f_{\min}$
and there may not exist a point $x^*=(x_1^*,\,\cdots,\,x_n^*)$
such that every $x_i^* \in \mc{X}_{\Dt_i}$.
This is demonstrated as follows.

\begin{eg} \label{ex:extraction-not-minimizer}
Consider the following sparse polynomial optimization problem:
\be\label{eq:extraction-x-X}
\left\{
\baray{cl}
\min\limits_{x \in \re^3} & f(x) = x_1 +x_2 + x_3 \\
\st  &  (x_1 - 1)(x_1 - 2)(x_1+x_2-6)= 0,\ x_1 - x_2 = 0, \\
	 &  (x_2 - 1)(x_2 - 2)(x_2+x_3-6) = 0,\ (x_2 + x_3 - 3)(x_2-3) = 0, \\
	 &  (x_3 - 1)(x_3 - 2)(x_1+x_3-6) = 0,\ x_1 - x_3 = 0. \\
\earay
\right.
\ee
In the above, $f_1 = x_1, f_2 = x_2, f_3=x_3,$
$\Dt_1 = \{1,2\}$, $\Dt_2 = \{2,3\}$, $\Dt_3 = \{1,3\}$, and
$$
\begin{aligned}
& h_1 = \big((x_1 - 1)(x_1 - 2)(x_1+x_2-6), \, x_1 - x_2\big), \\
& h_2 = \big((x_2 - 1)(x_2 - 2)(x_2+x_3-6),\, (x_2 + x_3 - 3)(x_2-3)\big), \\
& h_3 = \big((x_3 - 1)(x_3 - 2)(x_1+x_3-6),\, x_1 - x_3\big).
\end{aligned}
$$
Clearly, we can check that $K = \{(3,3,3)\}$ and
\begin{eqnarray}  \label{eq:uij}
K_{\Dt_1} = \Big\{\bbm 1 \\ 1 \ebm, \bbm 2 \\ 2 \ebm, \bbm 3 \\ 3 \ebm \Big\}, &&
K_{\Dt_2} = \Big\{\bbm 1 \\ 2 \ebm, \bbm 2 \\ 1 \ebm, \bbm 3 \\ 3 \ebm \Big\},  \\
K_{\Dt_3} =  \Big\{\bbm 1 \\ 1 \ebm, \bbm 2 \\ 2 \ebm, \bbm 3 \\ 3 \ebm \Big\}. &&  \nonumber
\end{eqnarray}
For each $k\ge 2$, let $y^*\in \re^{\mathbb{U}_k}$ be such that
\begin{eqnarray}  \label{eq:ystar}
y^*_{\Dt_1} = \frac{1}{2} \Big( \bbm 1 \\ 1 \ebm_{2k} + \bbm 2 \\ 2 \ebm_{2k} \Big), &&
y^*_{\Dt_2} = \frac{1}{2} \Big( \bbm 1 \\ 2 \ebm_{2k} + \bbm 2 \\ 1 \ebm_{2k} \Big), \\
y^*_{\Dt_3} = \frac{1}{2} \Big( \bbm 1 \\ 1 \ebm_{2k} + \bbm 2 \\ 2 \ebm_{2k} \Big). &&  \nonumber
\end{eqnarray}
The above $y^*$ is well-defined,
i.e., $(y_{\Dt_i}^*)_{\af} = (y_{\Dt_j}^*)_{\af}$ for all $\af \in \N_{2k}^{\Dt_i} \cap \N_{2k}^{\Dt_j}$,
and it is feasible for the sparse moment relaxtion (\ref{eq:spa_mom}).
So, $f^{smo}_k \le \langle f,y^* \rangle =\frac{9}{2}$.
Since
\begin{eqnarray*}
x_1+x_2+x_3 - \frac{9}{2}
&=& (x_2-2x_1+2)h_{1,2} - h_{2,2} + (2x_3-x_1-1)h_{3,2}  \\
&&  + \frac{3}{2}(x_1-x_2)^2 + \frac{1}{2}(x_2+x_3-3)^2 + \frac{3}{2}(x_1-x_3)^2 \\
& \in &  \ideal{h}_{spa,2} + \Sigma[\xdt{1}]_{2}+ \Sigma[\xdt{2}]_{2}+ \Sigma[\xdt{3}]_{2},
\end{eqnarray*}
we have $f^{smo}_k\ge f^{spa}_k \ge \frac{9}{2}$ for all $k\ge 2$.
Hence, $f^{smo}_k = f^{spa}_k = \frac{9}{2}$ and $y^*$ is a minimizer of (\ref{eq:spa_mom}).
However, $f^{smo}_k < f_{\min} = 9$,
since the only feasible point of~(\ref{eq:extraction-x-X}) is $(3,3,3)$.
This shows that condition (\ref{decomp_i})
is not sufficient for the sparse Moment-SOS hierachy to be tight.
\end{eg}

For the decomposition \reff{decomp_i} to hold,
we typically need to assume the {\it flat truncation} condition:
there exists $t \in [k_0,k]$ such that ($d_i$ is given in (\ref{degreedi}))
\be\label{FTspa}
\rank \, M_{\Dt_i}^{(t)}[y^*_{\Dt_i}] = \rank \, M_{\Dt_i}^{(t-d_i)}[y^*_{\Dt_i}].
\ee
When (\ref{FTspa}) holds, we have the decomposition (\ref{decomp_i}) with
$r_i  =  \rank \, M_{\Dt_i}^{(t)}[y^*_{\Dt_i}]$.
We refer to \cite{henrion2005detecting,Lau05} for this fact.
Under some genericity assumptions, the dense Moment-SOS hierarchy is tight if and only if
the flat truncation holds (see \cite[Theorem~2.2]{nie2013certifying}
or \cite[Section~5.3]{nie2023moment}).
For the sparse Moment-SOS hierarchy, if (\ref{FTspa}) holds with $t=k$ for all $i$ and $\rank \,  M_{\Dt_{ij}}^{(k)}[y^*_{\Dt_i}] = 1$ for all $i\ne j$ when $\Dt_{i}\cap \Dt_j\ne \emptyset$,
then $f^{smo}_k = f_{\min}$ and one can extract minimizers for (\ref{sparse:pop}); see \cite[Theorem~3.7]{Lasserre06}.
Moreover, by Theorem~\ref{optmin}, if (\ref{FTspa}) holds for some $t\le k$ for all $i$ and $f^{smo}_t = f^{smo}_k$, then each point $x^*$ such that every $\xdt{i}^*\in\mc{X}_{\Dt_i}$ minimizes (\ref{sparse:pop}), and tightness of the sparse Moment relaxation (\ref{eq:spa_mom}) is certified when such $x^*$ exists.

The connectedness of the sparsity pattern set $\mc{D} \coloneqq \{\Dt_1, \ldots, \Dt_m\}$
can be given by the graph $G = ([m], E)$ whose vertex set is $[m]$
and whose edge set $E$ consists of pairs $(i,j)$ such that $\Dt_i \cap \Dt_j \ne \emptyset.$
The set $\mc{D}$ is said to be a {\it connected cover} of $[n]$
if $G$ is a connected graph and $\Dt_1 \cup \cdots \cup \Dt_m = [n]$.
The following gives conditions for the existence of
a representing measure for a truncation of $y^*$.
For a degree $t \le k$, denote (refer to \reff{u} for the notation of $\U_t$)
\be \label{tms:spar:2t}
y^* |_{spa, 2t} = \big( y^*_{\af} \big)_{ \af \in \U_t}, \quad
[u]_{spa, 2t} =  \big( u^{\af} \big)_{ \af \in \U_t}.
\ee

\begin{thm}  \label{thm:spar2full:mom}
Suppose $\{\Dt_1, \ldots, \Dt_m\}$ is a connected cover of $[n]$ and satisfies the RIP.
Assume $y^*$ is a minimizer of \reff{eq:spa_mom} and \reff{FTspa} holds for each $i$.
Let $r_i$ be as in \reff{decomp_i}. Assume $r_1 = \cdots = r_m = r$.
If for all $i \ne j$ with $\Dt_{i} \cap \Dt_j \ne \emptyset$ it holds
\be\label{FTDt_ij}
r = \rank \, M_{\Dt_{ij} }^{(t)}[y^*_{\Dt_{ij}}] = \rank \, M_{\Dt_{ij}}^{(t-1)}[y^*_{\Dt_{ij}}],
\ee
then there are points $u^{(1)}, \ldots, u^{(r)} \in K$ such that
\be\label{decomp_spa_general}
\boxed{
\begin{gathered}
y^* |_{spa, 2t} = \lambda_1[u^{(1)}]_{spa, 2t} + \cdots + \lambda_r[u^{(r)}]_{spa, 2t}, \\
\lambda_1 >0, \ldots, \lambda_r >0, \, \lambda_1 + \cdots + \lambda_r = 1.
\end{gathered}
}
\ee
Moreover, $u^{(1)}, \ldots, u^{(r)}$ are minimizers of \reff{sparse:pop}.
\end{thm}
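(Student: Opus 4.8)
The plan is to produce, for each block, a finitely atomic representing measure of $y^*_{\Dt_i}|_{2t}$ via flat truncation, and then to glue these local measures into $r$ global points of $K$ using the running intersection property. Since \reff{FTspa} holds, the decomposition \reff{decomp_i} gives
\[
y^*_{\Dt_i}|_{2t} = \sum_{l=1}^{r} \lmd_{i,l} [u^{(i,l)}]_{2t}, \quad u^{(i,l)} \in K_{\Dt_i},
\]
with $r$ distinct atoms and positive weights summing to one; equivalently, the flat moment matrix $M_{\Dt_i}^{(t)}[y^*_{\Dt_i}]$ admits a \emph{unique} $r$-atomic representing measure $\mu_i \coloneqq \sum_{l} \lmd_{i,l}\,\delta_{u^{(i,l)}}$ by the flat extension theorem.

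The first key step is to analyze the overlaps. Fix $i \ne j$ with $\Dt_{ij} \ne \emptyset$. The pushforward $(\mathfrak{p}_{ij})_\ast \mu_i$ represents $y^*_{\Dt_{ij}}|_{2t}$, and so does $(\mathfrak{p}_{ij})_\ast \mu_j$. By \reff{FTDt_ij}, the matrix $M_{\Dt_{ij}}^{(t)}[y^*_{\Dt_{ij}}]$ is flat with rank $r$, so it too has a unique $r$-atomic representing measure. Because this unique measure already carries $r$ atoms, the projection $\mathfrak{p}_{ij}$ cannot merge any two of the $r$ atoms of $\mu_i$; hence $\mathfrak{p}_{ij}$ is injective on $\{u^{(i,1)}, \ldots, u^{(i,r)}\}$, and likewise for $\mu_j$. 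Comparing the two $r$-atomic pushforwards, which must coincide, yields a weight-preserving bijection $\pi_{ij}\colon [r] \to [r]$ with
\[
\mathfrak{p}_{ij}\big(u^{(i,l)}\big) = \mathfrak{p}_{ij}\big(u^{(j,\pi_{ij}(l))}\big), \qquad \lmd_{i,l} = \lmd_{j,\pi_{ij}(l)} \quad (l \in [r]).
\]

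The second, and hardest, step is to make these pairwise matchings globally consistent and so assemble global points. Here the RIP is essential: order the blocks so that for each $j \ge 2$ one has $\Dt_j \cap (\Dt_1 \cup \cdots \cup \Dt_{j-1}) \subseteq \Dt_{t(j)}$ for some $t(j) \le j-1$. Processing the blocks in this order, I inductively relabel the atoms of $\Dt_j$ by $\pi_{t(j)j}$ so that they agree with the already-fixed labeling of block $t(j)$ on the overlap $\Dt_{t(j)j}$. Because the new overlap is contained in the \emph{single} previous block $\Dt_{t(j)}$, no conflicting constraint arises from the other earlier blocks, so there is no cycle of bijections to reconcile; this is exactly what chordality buys us. After relabeling, define global points $u^{(1)}, \ldots, u^{(r)} \in \re^n$ by setting the coordinates of $u^{(l)}$ on each $\Dt_i$ to be $u^{(i,l)}$. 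Consistency on overlaps, together with $\Dt_1 \cup \cdots \cup \Dt_m = [n]$ from the connected cover hypothesis, makes this well defined, and since each $\mathfrak{p}_i(u^{(l)}) = u^{(i,l)} \in K_{\Dt_i}$ we get $u^{(l)} \in K$. The common weight $\lmd_l \coloneqq \lmd_{i,l}$ is independent of $i$ by the weight-preserving property of each $\pi_{ij}$.

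Finally I verify \reff{decomp_spa_general} and the minimizer property. For each $\af \in \U_t$ there is some $i$ with $\af \in \N_{2t}^{\Dt_i}$, and restricting the block identity $y^*_{\Dt_i}|_{2t} = \sum_l \lmd_l [\mathfrak{p}_i(u^{(l)})]_{2t}$ to that coordinate gives \reff{decomp_spa_general}. Since $t \ge k_0 \ge \lceil \deg(f)/2 \rceil$, each $\langle f_i, y^*_{\Dt_i} \rangle = \sum_l \lmd_l f_i(\mathfrak{p}_i(u^{(l)}))$, whence
\[
f_k^{smo} = \langle f, y^* \rangle = \sum_{l=1}^r \lmd_l\, f\big(u^{(l)}\big).
\]
Each $u^{(l)} \in K$ forces $f(u^{(l)}) \ge f_{\min} \ge f_k^{smo}$, while the displayed average equals $f_k^{smo}$; with $\lmd_l > 0$ and $\sum_l \lmd_l = 1$ this is possible only if $f(u^{(l)}) = f_{\min}$ for every $l$, so $u^{(1)}, \ldots, u^{(r)}$ are minimizers of \reff{sparse:pop}. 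I expect the main obstacle to be the gluing in the third step: ensuring the local bijections $\pi_{ij}$ patch together without contradiction, which relies crucially on the chordal (RIP) structure to rule out inconsistent cycles.
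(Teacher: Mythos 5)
Your proposal is correct and takes essentially the same route as the paper's proof: flat truncation gives the unique $r$-atomic representing measure on each block, the overlap rank condition \reff{FTDt_ij} forces the projections $\mathfrak{p}_{ij}$ to be injective on the atoms and yields weight-preserving matchings, the RIP ordering glues the local atoms into global points of $K$ (the paper organizes this gluing as an induction on $m$), and the same averaging argument shows every glued point attains $f_{\min}$. The sequential relabeling via $\pi_{t(j)j}$ that you describe is exactly the paper's inductive step in different words, so there is no substantive difference.
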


The proof of Theorem~\ref{thm:spar2full:mom}
is given in Subsection~\ref{pf_thm:spar2full:mom}.
For the dense case (i.e., $m=1$),
if the flat truncation (\ref{FTspa}) holds,
then there exists a unique decomposition (up to permutations) given as in (\ref{decomp_spa_general}),
and the tightness of the dense moment relaxation can be certified.
However, for the sparse case, the condition (\ref{FTspa})
may not guarantee existence of (\ref{decomp_spa_general})
or tightness of the sparse moment relaxation (\ref{eq:spa_mom}).
To see this, consider the problem (\ref{eq:extraction-x-X})
in Example~\ref{ex:extraction-not-minimizer} and the minimizer
$y^*\in\re^{\mathbb{U}_2}$ as in (\ref{eq:ystar}).
Both conditions (\ref{FTspa}) and (\ref{FTDt_ij})
hold for all $i=1,2,3$ with $t=2$ and $r=2$.
But the decomposition (\ref{decomp_spa_general}) does not hold and
$ f^{smo}_k < f_{\min}$ for all $k$.
The RIP fails for Example~\ref{ex:extraction-not-minimizer}.
When RIP holds, conditions (\ref{FTspa}) and (\ref{FTDt_ij})
imply the decomposition (\ref{decomp_spa_general}) and tightness of
the sparse moment relaxation (\ref{eq:spa_mom}).

If the set $\mc{D} = \{\Dt_1, \ldots, \Dt_m\}$ is not a connected cover of $[n]$,
then $\mc{D}$ can be expressed as a union of disjoint subsets, say,
$\mc{D} = \mc{D}_1 \cup \cdots \cup \mc{D}_{\ell}, $
such that each $\mc{D}_i$ is a connected cover for a subset of $[n]$.
Then, the sparse optimization problem \reff{sparse:pop}
can be split into a union of smaller sized problems that do not have mutually joint variables.

\section{Some sufficient conditions for tightness}
\label{sc:sufficient}

This section gives some sufficient conditions for
the sparse Moment-SOS hierarchy to be tight.
The sparse hierarchy of (\ref{eq:spa_sos})-(\ref{eq:spa_mom})
is tight if and only if \reff{eq:p+fmin_in_idl} or \reff{eq:sep_pi_eps} holds for some $k$.
This leads to the following assumption.

\begin{ass}\label{eq:f+p>=0}
There exist polynomials $p_i \in \re[\xdt{i}]$ satisfying
\be  \label{assump}
\boxed{
\begin{array}{c}
\ p_1  + \cdots + p_m  + f_{\min}  =0 , \\
f_i + p_i \ge 0~ \text{on} ~ K_{\Dt_i}, \,  i=1\ddd m.
\end{array}
}
\ee
\end{ass}

For the dense case (i.e., $m=1$), Assumption~\ref{eq:f+p>=0} holds automatically for $p_1 = -f_{\min}$
if $-\infty<f_{\min}<+\infty$ (note that $f_{\min}=+\infty$ if $K=\emptyset$).
However, it may not hold for the sparse case. We refer to Example~\ref{ex:no_pi} for such an exposition.
Besides that, we remark that \reff{assump} does not imply
$f_i + p_i \in \mbox{Ideal}_{\Dt_i}[h_i] + \qm_{\Dt_i}[g_i]$.
This section explores various conditions for Assumption~\ref{eq:f+p>=0} to hold
and for the sparse Moment-SOS hierarchy of
(\ref{eq:spa_sos})-(\ref{eq:spa_mom}) to be tight.

\subsection{The convex case}

We consider the case that (\ref{sparse:pop}) is a convex optimization problem.
For each $i=1\ddd m$, we write that
\be \label{higi:tuple}
h_i = (h_{i,1}\ddd h_{i,\ell_i}),\quad  g_i = (g_{i,1}\ddd g_{i,s_i}).
\ee
Assume each $h_i$ is linear (or it does not appear),
all $f_{i}$ and $-g_{i,j}$ are convex,
and $u$ is a minimizer of \reff{sparse:pop}.
Under certain constraint qualification like Slater's condition
(i.e., there is a feasible point $x$ for \reff{sparse:pop} such that all $g_i(x)>0$),
there exist Lagrange multipliers $\lmd_{i,j}$ and $\nu_{i,j}$ such that
($\perp$ means that the product is $0$)
\be\label{eq:KKTequation}
\left\{
\begin{gathered}
\displaystyle \sum_{i=1}^m \nabla f_i(\udt{i}) = \sum_{i=1}^m \Big[\sum_{j=1}^{s_i}\lmd_{i,j}\nabla g_{i,j}(\udt{i}) + \sum_{j=1}^{\ell_i}\nu_{i,j}\nabla h_{i,j}(\udt{i})\Big],\\
0\le \lmd_{i,j}\perp g_{i,j}(\udt{i}) \ge 0,\ i = 1, \ldots, m, \ j=1,\ldots, s_i.
\end{gathered}
\right.
\ee
This is called the first order optimality condition (FOOC).
By Theorem~\ref{thm:sepa:match}, if Assumption~\ref{eq:f+p>=0} is satisfied with polynomials $p_i\in\re[\xdt{i}]$ such that every $f_i+p_i$ is convex,
then the sparse Moment-SOS hierarchy is tight under certain conditions; see \cite[Chapter~7.2]{nie2023moment}.
Interestingly, when (\ref{sparse:pop}) has a minimizer satisfying the FOOC (\ref{eq:KKTequation}),
such polynomials $p_i\in\re[\xdt{i}]$ always exist.
This is shown as follows.

\begin{thm}\label{tm:convex}
Suppose that each $h_i$ is linear (or it does not appear),
all $f_{i}$ and $-g_{i,j}$ are convex,
and $u$ is a minimizer of (\ref{sparse:pop}).
If (\ref{eq:KKTequation}) holds,
then Assumption~\ref{eq:f+p>=0} holds.
Furthermore, if in addition, the sub-Hessian
\[
\nabla_{\Dt_i}^2 \bigg[f_i(\udt{i}) - \sum_{j=1}^{s_i}\lmd_{i,j}g_{i,j}(\udt{i}) \bigg]
\]
is positive definite and each $\idea_{\Dt_i}[h_i] + \qm_{\Dt_i}[g_i]$
is archimedean, then the sparse Moment-SOS hierarchy of
(\ref{eq:spa_sos})-(\ref{eq:spa_mom}) is tight.
\end{thm}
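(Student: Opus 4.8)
The plan is to establish the first assertion by an explicit construction of the $p_i$, and then to reduce the tightness claim to a block-wise finite-convergence result for convex polynomial optimization. For each $i$, introduce the polynomial
\[
L_i \;\coloneqq\; f_i - \sum_{j=1}^{s_i}\lmd_{i,j}g_{i,j} - \sum_{j=1}^{\ell_i}\nu_{i,j}h_{i,j}\;\in\;\re[\xdt{i}],
\]
which is convex in $\xdt{i}$ because $f_i$ and each $-g_{i,j}$ are convex, each $\lmd_{i,j}\ge 0$, and each $h_{i,j}$ is affine. I would then define the \emph{affine} correction polynomials
\[
p_i(\xdt{i}) \;\coloneqq\; -\,f_i(\udt{i}) \;-\; \nabla_{\Dt_i}L_i(\udt{i})^{T}(\xdt{i}-\udt{i}) \;\in\;\re[\xdt{i}].
\]

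To verify Assumption~\ref{eq:f+p>=0}, first note that by complementary slackness in \reff{eq:KKTequation} and $h_{i,j}(\udt{i})=0$ we have $L_i(\udt{i})=f_i(\udt{i})$; moreover on $K_{\Dt_i}$ one has $L_i = f_i - \sum_j\lmd_{i,j}g_{i,j}\le f_i$ since $\lmd_{i,j}\ge 0$ and $g_{i,j}\ge 0$ there. Combining this with the tangent-plane inequality for the convex $L_i$ gives, on $K_{\Dt_i}$,
\[
f_i + p_i \;=\; f_i - L_i(\udt{i}) - \nabla_{\Dt_i}L_i(\udt{i})^{T}(\xdt{i}-\udt{i}) \;\ge\; L_i - L_i(\udt{i}) - \nabla_{\Dt_i}L_i(\udt{i})^{T}(\xdt{i}-\udt{i}) \;\ge\; 0 .
\]
For the summation condition, $\sum_i p_i = -\sum_i f_i(\udt{i}) - \sum_i \nabla_{\Dt_i}L_i(\udt{i})^{T}(\xdt{i}-\udt{i})$, where the first term equals $-f(u)=-f_{\min}$. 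Reading the coupled stationarity equation in \reff{eq:KKTequation} coordinatewise shows that for every $k\in[n]$ the coefficient $\sum_{i:\,k\in\Dt_i}\partial_{x_k}L_i(\udt{i})$ vanishes, so the linear term is identically zero as a polynomial. Hence $p_1+\cdots+p_m+f_{\min}=0$ and Assumption~\ref{eq:f+p>=0} holds.

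For the tightness claim I would exploit that each $p_i$ is affine, so $\phi_i\coloneqq f_i+p_i$ is convex with $\nabla^2_{\Dt_i}\phi_i=\nabla^2_{\Dt_i}f_i$. Writing $\phi_i = R_i + \sum_j\lmd_{i,j}g_{i,j} + \sum_j\nu_{i,j}h_{i,j}$, where $R_i$ is the second-order Taylor remainder of $L_i$ at $\udt{i}$, the term $R_i$ is convex and globally nonnegative with $R_i(\udt{i})=0$ and $\nabla^2_{\Dt_i}R_i(\udt{i})=\nabla^2_{\Dt_i}L_i(\udt{i})\succ 0$ (using that $h$ is affine). Since the zero set of a convex nonnegative function is convex and a positive definite Hessian makes $\udt{i}$ isolated in it, $\udt{i}$ is the \emph{unique} zero of $R_i$, hence the unique minimizer of $\phi_i$ on $K_{\Dt_i}$, with value $0$. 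By construction $\udt{i}$ satisfies the KKT system of the block problem $\min_{\xdt{i}\in K_{\Dt_i}}\phi_i$ with multipliers $\lmd_{i,j},\nu_{i,j}$, and the associated Lagrangian sub-Hessian there equals $\nabla^2_{\Dt_i}L_i(\udt{i})\succ 0$. With $\idea_{\Dt_i}[h_i]+\qm_{\Dt_i}[g_i]$ archimedean, the finite-convergence theorem for convex polynomial optimization (see \cite[Chapter~7.2]{nie2023moment} and \cite{Nie14}) yields an order $k_i$ with $f_i+p_i\in \idea_{\Dt_i}[h_i]_{2k_i}+\qm_{\Dt_i}[g_i]_{2k_i}$. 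Taking $k=\max_i k_i$ and invoking Theorem~\ref{thm:sepa:match}(i) gives $f-f_{\min}\in \ideal{h}_{spa,2k}+\qmod{g}_{spa,2k}$, so $f^{spa}_k=f_{\min}$ and the hierarchy is tight. The main obstacle is precisely this last invocation: because $\phi_i$ attains its minimum $0$ on $K_{\Dt_i}$ at $\udt{i}$ and need not itself be SOS, placing it in the truncated quadratic module at a finite order is the delicate point, and I would need to confirm that convexity together with the positive-definite Lagrangian sub-Hessian and archimedeanness is exactly what drives that representation (equivalently, that the block-level second-order sufficient condition holds at the unique minimizer).
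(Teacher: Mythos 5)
Your proposal is correct and takes essentially the same route as the paper: your $p_i$ is exactly the paper's affine correction $-(x-u)^T\big[\nabla f_i(\udt{i})-\sum_j\lmd_{i,j}\nabla g_{i,j}(\udt{i})-\sum_j\nu_{i,j}\nabla h_{i,j}(\udt{i})\big]-f_i(\udt{i})$, your tangent-plane verification of Assumption~\ref{eq:f+p>=0} is the paper's observation that $\udt{i}$ satisfies the FOOC of the convex block problem \reff{eq:minf+p} with optimal value $0$, and your tightness step is the paper's block-wise invocation of the convex finite-convergence result (\cite[Corollary~3.3]{deklerk2011}, equivalently \cite[Theorem~7.2.5]{nie2023moment}) combined with Theorem~\ref{thm:sepa:match}(i). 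The ``delicate point'' you flag at the end is exactly what those cited results supply (KKT point, positive definite Lagrangian sub-Hessian, archimedeanness yield membership of $f_i+p_i$ in the truncated ideal plus quadratic module), so there is no gap.
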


\begin{proof}
For each $i=1\ddd m$, let
\[
p_i(x) \coloneqq -(x-u)^T\big[\nabla f_i(\udt{i}) - 
\sum_{j=1}^{s_i}\lmd_{i,j}\nabla g_{i,j}(\udt{i}) - 
\sum_{j=1}^{\ell_i}\nu_{i,j}\nabla h_{i,j}(\udt{i})\big] - f_i(\udt{i}).
\]
Since $f_i$, $g_i$, $h_i$ only depend on $\xdt{i}$,
we have $p_i \in \re[\xdt{i}]$. Note
\[
f_1(\udt{1}) + \cdots + f_m(\udt{m}) = f_{\min}.
\]
The equation in (\ref{eq:KKTequation}) and the above choice of $p_i$ imply
\[
p_1 +\cdots+ p_m + f_{\min} = 0,
\]
\[
\nabla f_i(\udt{i})+ \nabla p_i(\udt{i}) =  
\sum_{j=1}^{s_i}\lmd_{i,j}\nabla g_{i,j}(\udt{i}) +
\sum_{j=1}^{\ell_i}\nu_{i,j}\nabla h_{i,j}(\udt{i}),
\]
so $\udt{i}$ satisfies the FOOC for
\be\label{eq:minf+p} \left\{
\begin{array}{cl}
\displaystyle\min_{\xdt{i}} & f_i(\xdt{i})+p_i(\xdt{i}) \\
\st & h_i(\xdt{i}) = 0, \, g_i(\xdt{i}) \ge 0.
\end{array}
\right.
\ee
Since $p_i(\xdt{i})$ is linear, $f_i(\xdt{i}) + p_i(\xdt{i})$
is also convex in $\xdt{i}$,
hence $\udt{i}$ is a minimizer of (\ref{eq:minf+p}).
Its minimum value is $0$, so $f_i+p_i\ge0 $ on $K_{\Dt_i}$.
Therefore, Assumption~\ref{eq:f+p>=0} holds.
The tightness of the hierarchy of (\ref{eq:spa_sos})-(\ref{eq:spa_mom})
follows from Theorem~\ref{thm:sepa:match} and \cite[Corollary~3.3]{deklerk2011}
(or \cite[Theorem~7.2.5]{nie2023moment}).
\end{proof}

Theorems~\ref{optmin} and \ref{thm:spar2full:mom}
can be applied to get minimizers of \reff{sparse:pop}.
Note that a convex optimization problem typically has a unique optimizer,
or otherwise it has infinitely many optimizers.
For this reason, when the conditions \reff{decomp_i}-\reff{suppxdti}
and/or \reff{FTDt_ij}-\reff{decomp_spa_general} hold,
we typically have $r_i=1$ for all $i$.
For such cases, if $y^*$ is a minimizer of \reff{eq:spa_mom},  the point
\[
x^* \, \coloneqq \, (y^*_{e_1}, \ldots, y^*_{e_n} )
\]
is a minimizer of \reff{sparse:pop}.

Next, we consider the special case that the defining polynomials of (\ref{sparse:pop}) are SOS-convex.
Recall that a polynomial $\phi(x)$ is SOS-convex
if its Hessian $\nabla^2 \phi(x) = H(x)^T H(x)$
for some matrix polynomial $H(x)$.
We refer to \cite[Chap.~7]{nie2023moment} for SOS-convex polynomials.
For the dense case (i.e., $m=1$), when $f$ and all $-g_{i,j}$
are SOS-convex and $h$ is linear (or it does not appear),
the dense Moment-SOS relaxations are tight for all $k\ge k_0$; see \cite{Las09} or \cite[Section~7.2]{nie2023moment}.
In the following, we show that the sparse Moment-SOS relaxations
\reff{eq:spa_sos}-\reff{eq:spa_mom} are also tight for all $k \ge k_0$
and the condition \reff{eq:p+fmin_in_idl} holds, under the SOS-convexity assumption.

\begin{thm}\label{tm:sosconvex}
Assume the feasible set $K\ne \emptyset$ and
the minimum value $f_{\min} > -\infty$.
Suppose each $h_i$ is linear or it does not appear, each $f_{i}$ and $-g_{i,j}$
are SOS-convex in $x_{\Dt_i}$.
Then, we have:
\begin{enumerate}[(i)]
\item For all $k \ge k_0$, $f^{smo}_k = f_{\min}$.
Moreover, if Slater's condition holds, then $f^{spa}_k = f_{\min}$ and
\be \label{finIQ:soscvx}
f - f_{\min} \in \ideal{h}_{spa,2k}+\qmod{g}_{spa,2k}.
\ee

\item For every minimizer $y^*$ of \reff{eq:spa_mom}, the point
$
x^* = (y^*_{e_1}, \ldots, y^*_{e_n} )
$
is a minimizer of \reff{sparse:pop}.

\end{enumerate}
\end{thm}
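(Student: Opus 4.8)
The plan is to treat the two halves of part (i) separately, and to deduce part (ii) from the proof of the first half of (i). The central tool is the moment version of Jensen's inequality for SOS-convex polynomials: if $\phi\in\re[\xdt{i}]$ is SOS-convex with $\deg\phi\le 2k$ and $y_{\Dt_i}$ is a tms with $(y_{\Dt_i})_0=1$ and $M_{\Dt_i}^{(k)}[y_{\Dt_i}]\succeq 0$, then $\langle\phi,y_{\Dt_i}\rangle\ge\phi(\hat{x}_{\Dt_i})$, where $\hat{x}_{\Dt_i}=((y_{\Dt_i})_{e_j})_{j\in\Dt_i}$ is the vector of first-order moments (see \cite{Las09,nie2023moment}).

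First I would prove $f_k^{smo}=f_{\min}$ for every $k\ge k_0$. Since every feasible point of \reff{sparse:pop} induces a feasible $y$ for \reff{eq:spa_mom} with the same objective value, we always have $f_k^{smo}\le f_{\min}$. For the reverse inequality, let $y$ be any feasible point of \reff{eq:spa_mom} and set $x^\ast=(y_{e_1},\ldots,y_{e_n})$, so that its block projection is $\hat{x}_{\Dt_i}=\mathfrak{p}_i(x^\ast)$ by the consistency of the overlapping entries of $y$. Applying Jensen's inequality to each SOS-convex $f_i$ (legitimate since $\deg f_i\le 2k_0\le 2k$) gives $\langle f_i,y_{\Dt_i}\rangle\ge f_i(\hat{x}_{\Dt_i})$, hence $\langle f,y\rangle\ge f(x^\ast)$. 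I would then check $x^\ast\in K$: for each linear $h_{i,j}$, the entry of the localizing vector $\mathscr{V}_{h_i}^{spa,2k}[y]=0$ indexed by the constant monomial reads $\langle h_{i,j},y_{\Dt_i}\rangle=h_{i,j}(\hat{x}_{\Dt_i})=0$; for each $g_{i,j}$, the $(1,1)$ entry of $L_{g_i}^{spa,k}[y]\succeq 0$ gives $\langle g_{i,j},y_{\Dt_i}\rangle\ge 0$, and Jensen applied to the SOS-convex $-g_{i,j}$ gives $g_{i,j}(\hat{x}_{\Dt_i})\ge\langle g_{i,j},y_{\Dt_i}\rangle\ge 0$. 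Thus $\hat{x}_{\Dt_i}\in K_{\Dt_i}$ for all $i$, so $x^\ast\in K$ and $\langle f,y\rangle\ge f(x^\ast)\ge f_{\min}$. Taking the infimum over feasible $y$ yields $f_k^{smo}\ge f_{\min}$, completing the first claim.

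For the ``moreover'' part, I would invoke Slater's condition to produce, via the first order optimality condition and exactly the construction in the proof of Theorem~\ref{tm:convex}, affine polynomials $p_i\in\re[\xdt{i}]$ with $p_1+\cdots+p_m+f_{\min}=0$ such that each $f_i+p_i$ is SOS-convex (being the sum of an SOS-convex and an affine polynomial), is nonnegative on $K_{\Dt_i}$, and attains its minimum value $0$ at the KKT point $\udt{i}$. Global Slater restricts to Slater for each block, so the SOS-convex representation theorem \cite[Cor.~3.3]{deklerk2011} (or \cite[Thm.~7.2.5]{nie2023moment}) applies to the block problem \reff{eq:minf+p} and gives $f_i+p_i\in\idea_{\Dt_i}[h_i]_{2k}+\qm_{\Dt_i}[g_i]_{2k}$; the degree bound is controlled because the SOS multiplier has degree at most $\deg(f_i+p_i)\le 2k_0\le 2k$. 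Then \reff{eq:p+fmin_in_idl} holds, and Theorem~\ref{thm:sepa:match}(i) yields \reff{finIQ:soscvx}, whence $f_k^{spa}=f_{\min}$.

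Finally, part (ii) follows from the proof of the first half of (i). If $y^\ast$ is a minimizer of \reff{eq:spa_mom} and $x^\ast=(y^\ast_{e_1},\ldots,y^\ast_{e_n})$, the same chain gives $f_{\min}=f_k^{smo}=\langle f,y^\ast\rangle\ge f(x^\ast)\ge f_{\min}$ together with $x^\ast\in K$, forcing $f(x^\ast)=f_{\min}$; hence $x^\ast$ minimizes \reff{sparse:pop}. The main obstacle I expect is establishing the Jensen moment inequality and the SOS-convex representation with the correct degree bound (so that membership already holds at $k_0$); these are exactly the SOS-convexity inputs cited above, and the genuinely sparse content is only the consistency of first-order moments across overlapping blocks, which is immediate.
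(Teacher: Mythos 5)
Your treatment of the first claim of (i) and of part (ii) is exactly the paper's argument: Jensen's inequality for SOS-convex polynomials applied blockwise, the observation that linearity of $h_{i,j}$ gives $h_{i,j}(\hat{x}_{\Dt_i}) = \langle h_{i,j}, y_{\Dt_i}\rangle = 0$, and the combination of $\langle g_{i,j}, y_{\Dt_i}\rangle \ge 0$ with Jensen for $-g_{i,j}$ to get feasibility of $x^*$, so that $\langle f, y\rangle \ge f(x^*) \ge f_{\min}$ for every feasible $y$. Nothing to add there.

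For the ``moreover'' part of (i) you depart from the paper, and this is where the problem lies. The paper's proof is a short duality argument: Slater's condition makes the moment relaxation \reff{eq:spa_mom} strictly feasible, so strong duality holds between \reff{eq:spa_sos} and \reff{eq:spa_mom} and the SOS relaxation attains its optimal value; since $f_k^{smo} = f_{\min}$, that attainment is precisely \reff{finIQ:soscvx}. You instead construct the shifting polynomials $p_i$ from the KKT system as in Theorem~\ref{tm:convex}, apply the Lasserre/Helton--Nie SOS-convexity representation blockwise to get $f_i + p_i \in \idea_{\Dt_i}[h_i]_{2k_0} + \qm_{\Dt_i}[g_i]_{2k_0}$, and conclude via Theorem~\ref{thm:sepa:match}(i). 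This route is sound in outline, and it even buys something the paper's proof does not: an explicit certificate \reff{eq:p+fmin_in_idl} with degree control at $k_0$, which in particular verifies Assumption~\ref{eq:f+p>=0} in the SOS-convex case. But it has a genuine gap: the construction presupposes that \reff{sparse:pop} has a global minimizer $u$ at which \reff{eq:KKTequation} holds. The theorem only assumes $K \ne \emptyset$ and $f_{\min} > -\infty$; attainment of $f_{\min}$ is not a hypothesis, and Theorem~\ref{tm:convex} takes the existence of a minimizer as an assumption rather than proving it. For convex polynomial programs attainment does follow from a Frank--Wolfe type theorem (Belousov--Klatte), but you neither prove nor cite such a result, and nothing else in the paper supplies it; you also cannot extract it from part (ii), since the moment relaxation \reff{eq:spa_mom} need not attain its optimum either. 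The paper's duality argument sidesteps the issue entirely because it never needs a minimizer of \reff{sparse:pop}. To repair your version, either add the attainment theorem for convex polynomial programs as an explicit ingredient, or replace this step by the strong-duality argument.
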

\begin{proof}
Suppose $y$ is feasible for the relaxation~\reff{eq:spa_mom}
and let $v = (y_{e_1}, \ldots, y_{e_n})$. Since they are SOS-convex,
by Jensen's inequality (see \cite{Las09} or \cite[Chap.~7]{nie2023moment}), we have
\[
f_i(v_{\Dt_i}) \le \langle f_i, y_{\Dt_i} \rangle, \quad
-g_{i,j}(v_{\Dt_i}) \le - \langle g_{i,j}, y_{\Dt_i} \rangle, \quad
i=1,\ldots, m.
\]
When $h_i$ is linear, $h_i(v_{\Dt_i}) = \langle h_i, y_{\Dt_i} \rangle$.
The feasibility constraint in \reff{eq:spa_mom} implies that
$g_{i,j}(v_{\Dt_i}) \ge 0$, so $v$ is a feasible point for \reff{sparse:pop}.
Also note that
\[
f(v) = \sum_{i=1}^m  f_i(v_{\Dt_i}) \leq
\sum_{i=1}^m \langle f_i, y_{\Dt_i} \rangle = \langle f, y \rangle .
\]
The above holds for all $y$ that is feasible for \reff{eq:spa_mom},
so $f_{\min}  \leq f^{smo}_k$. On the other hand, we always have
$f_{\min}  \ge  f^{smo}_k$, so $f^{smo}_k = f_{\min}$.
Therefore, if $y^*$ is a minimizer of \reff{eq:spa_mom},
then $x^*$ is a minimizer of \reff{sparse:pop}.

When the Slater's condition holds,
the moment relaxation \reff{eq:spa_mom} has strictly feasible points.
So, the strong duality holds between
\reff{eq:spa_sos} and \reff{eq:spa_mom} and
\reff{eq:spa_sos} achieves its optimal value.
Therefore, $f^{spa}_k = f^{smo}_k = f_{\min}$
and \reff{finIQ:soscvx} holds.
\end{proof}

\subsection{The case of sufficient optimality conditions}

Under the archimedeanness,
the dense Moment-SOS hierarchy in \cite{Lasserre2001} is tight when
linear independence constraint qualification condition (LICQC),
strict complementarity condition (SCC) and
second order sufficient condition (SOSC)
hold at every minimizer, as shown in \cite{Nie14}.
We refer to \cite[Section~5.1]{nie2023moment} for these conditions.
For sparse Moment-SOS relaxations, we have a similar conclusion
under Assumption~\ref{eq:f+p>=0}.

\begin{thm}
\label{tm:finite_sosc}
Suppose Assumption~\ref{eq:f+p>=0} holds and each
$\idea_{\Dt_i}[h_i]+\qm_{\Dt_i}[g_i]$ is archimedean in $\re[\xdt{i}]$.
Assume the LICQC, SCC and SOSC hold at every minimizer of the optimization problem
\be \label{eq:split_pop}
\left\{ \baray{rl}
\min &  f_i( \xdt{i} ) + p_i( \xdt{i} )  \\
\st &  h_i( \xdt{i} )  =  0, \,
    g_i( \xdt{i} ) \ge  0.
\earay \right.
\ee
Then, it holds that
\begin{enumerate}[(i)]
\item When $k$ is big enough, we have $f^{spa}_k = f^{smo}_k = f_{\min}$.
Moreover, if each $\idea_{\Dt_i}[h_i]$ is real radical,
then
\be\label{fminachieved:opcd}
f - f_{\min} \in \ideal{h}_{spa}+\qmod{g}_{spa}.
\ee

\item When $k$ is big enough, every minimizer $y^*$ of \reff{eq:spa_mom}
satisfies the flat truncation condition: there exists a degree
$t \in [k_0,k] $ such that for every $i$,
\be\label{FTspa4}
r_i = \rank \, M_{\Dt_i}^{(t)}[y^*_{\Dt_i}] = \rank \, M_{\Dt_i}^{(t-d_i)}[y^*_{\Dt_i}],
\ee
where $d_i$ is as in \reff{degreedi}.
Therefore, the decomposition \reff{decomp_i} holds and
each point $x^*$ as in Theorem~\ref{optmin}
is a minimizer of \reff{sparse:pop}.

\item
Suppose $\{\Dt_1, \ldots, \Dt_m\}$ is a connected cover of $[n]$
(see the definition before Theorem~\ref{thm:spar2full:mom}) and satisfies the RIP.
Assume \reff{FTspa4} holds for all $i$ and let
$\mathcal{X}_{\Dt_i}$ be the set as in \reff{suppxdti}.
For all $i \ne j$ with $\Dt_{i} \cap \Dt_j \ne \emptyset$, suppose the projection
\[
\mathfrak{p}_{ij}(\mathcal{X}_{\Dt_i}) = \{\mathfrak{p}_{ij}(u^{(i,1)}), \ldots,
\mathfrak{p}_{ij}(u^{(i,r_i)}) \}
\]
is a set of $r_i$ distinct points. Then,
when $t$ is big enough, we have for all $i,j$
\be\label{FTyij}
r \coloneqq r_1 = \cdots = r_m =
 \rank \, M_{\Dt_{ij} }^{(t)}[y^*_{\Dt_{ij}}] =
\rank \, M_{\Dt_{ij}}^{(t-1)}[y^*_{\Dt_{ij}}],
\ee
and there exist $u^{(1)}, \ldots, u^{(r)} \in K$ such that
(see \reff{tms:spar:2t} for the notation)
\be\label{decomp_4.10}
\boxed{
\begin{gathered}
y^* |_{spa, 2t} = \lambda_1[u^{(1)}]_{spa, 2t} + \cdots + \lambda_r[u^{(r)}]_{spa, 2t}, \\
\lambda_1 >0, \ldots, \lambda_r>0,   \lambda_1 + \cdots + \lambda_r  =  1.
\end{gathered}
}
\ee
Moreover, $u^{(1)}, \ldots, u^{(r)}$ are minimizers of \reff{sparse:pop}.
\end{enumerate}
\end{thm}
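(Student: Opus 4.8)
The plan is to reduce the whole statement to the $m$ \emph{decoupled} subproblems \reff{eq:split_pop}, prove finite convergence for each of them with the dense results of \cite{Nie14,Nie13Finite}, and then glue the pieces together using Lemma~\ref{lm:sep} and the extraction Theorems~\ref{optmin} and \ref{thm:spar2full:mom}. The first thing I would do is pin down the optimal value $\rho_i$ of \reff{eq:split_pop}. Since each $\idea_{\Dt_i}[h_i]+\qm_{\Dt_i}[g_i]$ is archimedean, each $K_{\Dt_i}$ is compact, so $\rho_i$ is attained and the optimality conditions at its minimizers make sense. I claim $\rho_1=\cdots=\rho_m=0$. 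Indeed, Assumption~\ref{eq:f+p>=0} gives $f_i+p_i\ge 0$ on $K_{\Dt_i}$, hence $\rho_i\ge 0$; conversely, using $\sum_i p_i=-f_{\min}$ so that $f-f_{\min}=\sum_i(f_i+p_i)$, and assuming $K\neq\emptyset$,
\[
0=\inf_{x\in K}\big(f(x)-f_{\min}\big)=\inf_{x\in K}\sum_{i=1}^m\big(f_i(\xdt{i})+p_i(\xdt{i})\big)\ge\sum_{i=1}^m\rho_i\ge 0,
\]
which forces every $\rho_i=0$.

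For part (i), each subproblem \reff{eq:split_pop} now satisfies archimedeanness together with LICQC, SCC and SOSC at every minimizer, so the dense finite-convergence theorem of \cite{Nie14} yields an order $k$ with $f_i+p_i=f_i+p_i-\rho_i\in\idea_{\Dt_i}[h_i]_{2k}+\qm_{\Dt_i}[g_i]_{2k}$. Taking the maximum of these orders and applying Theorem~\ref{thm:sepa:match}(i) (equivalently Lemma~\ref{lm:sep} with $\gamma=f_{\min}$ and $G_i=\idea_{\Dt_i}[h_i]_{2k}+\qm_{\Dt_i}[g_i]_{2k}$) gives $f-f_{\min}\in\ideal{h}_{spa,2k}+\qmod{g}_{spa,2k}$, so $f^{spa}_k=f_{\min}$, and weak duality $f^{spa}_k\le f^{smo}_k\le f_{\min}$ then forces $f^{smo}_k=f_{\min}$. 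For the refinement, when each $\idea_{\Dt_i}[h_i]$ is real radical the subproblem certificate can be taken in the full ideal plus quadratic module (the equality ideal absorbs $f_i+p_i$ exactly, cf. \cite{Nie13Finite}), and Lemma~\ref{lm:sep} upgrades the conclusion to \reff{fminachieved:opcd}.

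For part (ii), the key linking step is to show that each subvector $y^*_{\Dt_i}$ is an optimal solution of the \emph{dense} moment relaxation of subproblem $i$. Feasibility of $y^*$ for \reff{eq:spa_mom} restricts to feasibility of $y^*_{\Dt_i}$ for that relaxation, whose optimal value is $\rho_i=0$; hence $\langle f_i+p_i,y^*_{\Dt_i}\rangle\ge 0$ for every $i$. On the other hand, by consistency of $y^*$ and $y^*_0=1$,
\[
\sum_{i=1}^m\langle f_i+p_i,\,y^*_{\Dt_i}\rangle=\langle f,y^*\rangle+\Big\langle\textstyle\sum_{i}p_i,\,y^*\Big\rangle=f_{\min}-f_{\min}=0,
\]
so each summand vanishes and $y^*_{\Dt_i}$ is optimal. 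The dense flat-truncation result under optimality conditions (\cite{Nie13Finite}, \cite[Ch.~5--6]{nie2023moment}) then gives \reff{FTspa4} and the decomposition \reff{decomp_i}; since at the order where flat truncation appears the subproblems are already tight, one also has $f^{smo}_t=f^{smo}_k=f_{\min}$, so Theorem~\ref{optmin} makes any $x^*$ with $\xdt{i}^*\in\mc{X}_{\Dt_i}$ a minimizer of \reff{sparse:pop}.

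For part (iii), I would simply verify the hypotheses of Theorem~\ref{thm:spar2full:mom}: the connected-cover and RIP assumptions are given, and blockwise flat truncation comes from part (ii); what remains is $r_1=\cdots=r_m=r$ together with the overlap condition \reff{FTyij}$=$\reff{FTDt_ij}. Here the distinct-projection hypothesis is decisive: the atomic measure $\sum_l\lambda_{i,l}\delta_{u^{(i,l)}}$ represents $y^*_{\Dt_i}$, its pushforward under $\mathfrak{p}_{ij}$ is supported on the $r_i$ distinct points $\mathfrak{p}_{ij}(\mc{X}_{\Dt_i})$, so for large $t$ one gets $\rank M_{\Dt_{ij}}^{(t)}[y^*_{\Dt_{ij}}]=\rank M_{\Dt_{ij}}^{(t-1)}[y^*_{\Dt_{ij}}]=r_i$; consistency of $y^*$ makes the two pushforwards (from the $i$- and $j$-sides) share all moments up to degree $2t$, hence equal as measures, forcing $r_i=r_j$, and connectedness propagates this to a common $r$. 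This is exactly \reff{FTyij}, after which Theorem~\ref{thm:spar2full:mom} delivers the global decomposition \reff{decomp_4.10} and the fact that $u^{(1)},\ldots,u^{(r)}$ minimize \reff{sparse:pop}. \textbf{The main obstacle} I expect is precisely this gluing: without RIP the local representing measures need not be mutually consistent (as Example~\ref{ex:extraction-not-minimizer} shows), and the distinct-projection hypothesis is the ingredient that upgrades block-wise flat truncation to the matched overlap ranks \reff{FTyij} needed to invoke Theorem~\ref{thm:spar2full:mom}.
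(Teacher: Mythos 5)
Your overall strategy---decouple into the $m$ subproblems \reff{eq:split_pop}, apply the dense results to each, and glue with Lemma~\ref{lm:sep}, Theorem~\ref{optmin} and Theorem~\ref{thm:spar2full:mom}---is the same as the paper's, and your parts (ii) and (iii) are essentially the paper's arguments (in (iii) your pushforward-measure reasoning for $r_i=r_j$ and the overlap ranks \reff{FTyij} is exactly how the paper proceeds). However, part (i) contains a genuine gap. You claim that \cite[Theorem~1.1]{Nie14} ``yields an order $k$ with $f_i+p_i\in\idea_{\Dt_i}[h_i]_{2k}+\qm_{\Dt_i}[g_i]_{2k}$.'' That theorem asserts finite convergence of the \emph{values} of the dense hierarchy, i.e., the supremum defining the SOS bound equals the minimum at some finite order $N_i$; it does \emph{not} assert that this supremum is attained. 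What it actually gives is: there exists $N_i$ such that for every $\eps>0$, $f_i+p_i+\eps\in\idea_{\Dt_i}[h_i]_{2N_i}+\qm_{\Dt_i}[g_i]_{2N_i}$. Exact membership (with $\eps=0$) can fail when $\idea_{\Dt_i}[h_i]$ is not real radical---which is precisely why the theorem's ``Moreover'' clause carries the real-radical hypothesis, under which the paper invokes \cite[Theorem~9.5.3]{marshall2008positive} and \cite[Theorem~3.1]{Nie14} to obtain $f_i+p_i\in\idea_{\Dt_i}[h_i]+\qm_{\Dt_i}[g_i]$. As written, your argument proves the strictly stronger (and in general false) statement that the sparse SOS relaxation \reff{eq:spa_sos} attains its optimum whenever the optimality conditions hold, and it renders the real-radical hypothesis redundant; indeed, your own ``refinement'' step then claims only the untruncated membership \reff{fminachieved:opcd}, which is \emph{weaker} than what you had already asserted---an internal inconsistency that flags the error.

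The gap is repairable, and the repair is exactly the paper's proof: work with the $\eps$-perturbed certificates. Summing them gives, for all $k\ge N\coloneqq\max_i N_i$ and all $\eps>0$,
\[
f-(f_{\min}-m\eps)\;=\;\sum_{i=1}^m\big(f_i+p_i+\eps\big)-\Big(\sum_{i=1}^m p_i+f_{\min}\Big)\;\in\;\ideal{h}_{spa,2k}+\qmod{g}_{spa,2k},
\]
whence $f^{spa}_k\ge f_{\min}-m\eps$ for every $\eps>0$, so $f^{spa}_k=f_{\min}$, and weak duality gives $f^{smo}_k=f_{\min}$. The same $\eps$-certificates are also what you need in part (ii): your step ``the dense moment relaxation of subproblem $i$ has optimal value $\rho_i=0$, hence $\langle f_i+p_i,y^*_{\Dt_i}\rangle\ge 0$'' silently uses tightness of that moment relaxation at order $k$; the clean justification is to pair the feasible tms $y^*_{\Dt_i}$ against $f_i+p_i+\eps\in\idea_{\Dt_i}[h_i]_{2k}+\qm_{\Dt_i}[g_i]_{2k}$ to get $\langle f_i+p_i,y^*_{\Dt_i}\rangle+\eps\ge 0$ and let $\eps\to 0$. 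With these corrections your proof coincides with the paper's.
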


The proof of Theorem~\ref{tm:finite_sosc} is given in Subsection~\ref{pf_tm:finite_sosc}.
For the dense case (i.e., $m=1$), the flat truncation (\ref{FTspa4})
is sufficient for the moment relaxation to be tight.
However, for the sparse case, the condition (\ref{FTspa4}) alone cannot guarantee tightness,
because it may not imply \reff{decomp_4.10}.
Example~\ref{ex:extraction-not-minimizer} is an exposition for this.

\subsection{The case of finite sets}

We discuss the case that each real variety $V_{\re}(h_i)$ is finite. In the following,
we prove the sparse Moment-SOS hierarchy of (\ref{eq:spa_sos})-(\ref{eq:spa_mom})
is tight if the equality constraint of \reff{eq:split_pop}
defines a finite real variety for each $i$, under Assumption~\ref{eq:f+p>=0}.
For the dense case (i.e., $m=1$), similar results are shown in \cite{Nie13Finite}.

\begin{thm}
\label{tm:finite_variety}
Suppose Assumption~\ref{eq:f+p>=0} holds and each real variety $V_{\re}(h_i)$ is finite.
Then, it holds that
\begin{enumerate}[(i)]
\item  When $k$ is big enough, we have $f^{spa}_k = f^{smo}_k = f_{\min}$.
Moreover, if each $\idea_{\Dt_i}[h_i]$ is real radical, then
\be\label{fminachieved:finite_var}
f - f_{\min} \in \ideal{h}_{spa,2k}+\qmod{g}_{spa,2k}.
\ee

\item When $k$ is big enough, every minimizer $y^*$ of \reff{eq:spa_mom}
satisfies the flat truncation condition: there exists a degree
$t\in [k_0,k]$ such that for every $i$,
\be\label{FTspa_finite_var}
r_i = \rank \, M_{\Dt_i}^{(t)}[y^*_{\Dt_i}] = \rank \, M_{\Dt_i}^{(t-d_i)}[y^*_{\Dt_i}],
\ee
where $d_i$ is as in \reff{degreedi}.
Therefore, the decomposition \reff{decomp_i} holds and
each point $x^*$ as in Theorem~\ref{optmin}
is a minimizer of \reff{sparse:pop}.

\item
Suppose $\{\Dt_1, \ldots, \Dt_m\}$ is a connected cover of $[n]$
(see the definition before Theorem~\ref{thm:spar2full:mom}) and satisfies the RIP.
Assume \reff{FTspa_finite_var} holds for all $i$ and let
$\mathcal{X}_{\Dt_i}$ be the set as in \reff{suppxdti}.
For all $i \ne j$ with $\Dt_{i} \cap \Dt_j \ne \emptyset$, suppose the projection
\[
\mathfrak{p}_{ij}(\mathcal{X}_{\Dt_i}) = \{\mathfrak{p}_{ij}(u^{(i,1)}), \ldots,
\mathfrak{p}_{ij}(u^{(i,r_i)}) \}
\]
is a set of $r_i$ distinct points. Then, when $t$ is big enough, we have for all $i,j$,
\be  \label{FTyD_ij}
\baray{c}
r \coloneqq r_1 = \cdots = r_m =
  \rank \, M_{\Dt_{ij} }^{(t)}[y^*_{\Dt_{ij}}] =
\rank \, M_{\Dt_{ij}}^{(t-1)}[y^*_{\Dt_{ij}}],
\earay
\ee
and there exist $u^{(1)}, \ldots, u^{(r)} \in K$ such that
(see \reff{tms:spar:2t} for the notation)
\be
\boxed{
\begin{gathered}
y^* |_{spa, 2t} = \lambda_1[u^{(1)}]_{spa, 2t} + \cdots + \lambda_r[u^{(r)}]_{spa, 2t}, \\
\lambda_1 >0, \ldots, \lambda_r>0,   \lambda_1 + \cdots + \lambda_r  =  1.
\end{gathered}
}
\ee
Moreover, $u^{(1)}, \ldots, u^{(r)}$ are minimizers of \reff{sparse:pop}.
\end{enumerate}
\end{thm}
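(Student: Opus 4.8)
The plan is to reduce Theorem~\ref{tm:finite_variety} to the machinery already assembled in the paper, treating it as the ``finite variety'' analogue of the sufficient-optimality-condition result Theorem~\ref{tm:finite_sosc}. The three parts have a clear logical skeleton: part (i) establishes finite convergence of the value, part (ii) upgrades this to flat truncation on each block, and part (iii) glues the block-wise measures into a global measure supported on $K$ using the RIP. The strategy for part (i) is to invoke Assumption~\ref{eq:f+p>=0}, which furnishes polynomials $p_i \in \re[\xdt{i}]$ with $p_1 + \cdots + p_m + f_{\min} = 0$ and $f_i + p_i \ge 0$ on $K_{\Dt_i}$. By Theorem~\ref{thm:sepa:match}(i), tightness of the sparse hierarchy is equivalent to the memberships $f_i + p_i \in \idea_{\Dt_i}[h_i]_{2k} + \qm_{\Dt_i}[g_i]_{2k}$ holding simultaneously. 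So the task collapses to a \emph{single-block} problem: showing that a polynomial nonnegative on $K_{\Dt_i}$ lies in the truncated ideal-plus-quadratic-module when $V_{\re}(h_i)$ is finite. This is precisely the dense finite-variety representation theorem of \cite{Nie13Finite}, applied separately to each split problem \reff{eq:minf+p} (with objective $f_i+p_i$); here the finiteness of $V_{\re}(h_i)$ supplies the required algebraic structure. The real-radical refinement giving \reff{fminachieved:finite_var} follows because, when $\idea_{\Dt_i}[h_i]$ is real radical, nonnegativity on the variety already certifies membership without perturbation.

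For part (ii), I would argue block-by-block that each minimizer $y^*$ of \reff{eq:spa_mom}, restricted to $y^*_{\Dt_i}$, is (for large $k$) an optimal moment sequence for the dense relaxation of the split problem \reff{eq:split_pop} over the finite variety. The key input is again from \cite{Nie13Finite,nie2013certifying}: when the variety is finite and the relaxation is tight, every optimal truncated moment sequence admits flat truncation \reff{FTspa_finite_var} at some order $t\in[k_0,k]$. The subtlety is that $y^*_{\Dt_i}$ is a minimizer of the \emph{sparse} relaxation, not the dense one, so I must first show that its block-restriction is optimal for the $i$th dense subproblem — this uses the value-matching from part (i) together with the fact that the objective decouples as $\langle f,y\rangle = \sum_i \langle f_i, y_{\Dt_i}\rangle$ and that each $f_i+p_i \ge 0$ attains its minimum $0$ on $K_{\Dt_i}$. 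Once flat truncation holds, \cite{henrion2005detecting,Lau05} yields the atomic decomposition \reff{decomp_i}, and Theorem~\ref{optmin} (given $f^{smo}_t = f^{smo}_k$, which follows from tightness) delivers that each assembled $x^*$ minimizes \reff{sparse:pop}.

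Part (iii) is where the RIP does the essential work and is the step I expect to be the main obstacle. Each block now carries a finitely-atomic measure $\mu_i = \sum_{j} \lambda_{i,j}\,\delta_{u^{(i,j)}}$ representing $y^*_{\Dt_i}|_{2t}$, and the goal is to synthesize a single measure on $K$ whose projections recover all the $\mu_i$. The distinctness hypothesis on the projections $\mathfrak{p}_{ij}(\mathcal{X}_{\Dt_i})$ forces the overlap moment matrices $M_{\Dt_{ij}}^{(t)}[y^*_{\Dt_{ij}}]$ to have rank exactly $r=r_1=\cdots=r_m$, giving \reff{FTyD_ij}; this matching of ranks across every nonempty intersection is what makes the atoms consistent along shared variables. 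The plan is to order the blocks so that RIP \reff{rip} holds, then glue inductively: having built a measure on $\Dt_1\cup\cdots\cup\Dt_{j-1}$, the RIP guarantees that $\Dt_j$ meets the union inside a single prior block $\Dt_t$, so the conditional-distribution (disintegration) construction — standard in the correlative-sparsity convergence proofs of \cite{Lasserre06,Kojima09} and reused in \cite{Nie13Finite,nie2023moment} — extends the measure uniquely to $\Dt_j$ by matching the common $r$-atomic marginal on $\Dt_{tj}$. The hard part is verifying that this gluing produces a measure supported on the \emph{global} feasible set $K$ (not merely on a product of the $K_{\Dt_i}$) with exactly $r$ atoms $u^{(1)},\ldots,u^{(r)}\in K$ giving \reff{decomp_spa_general}; the distinct-projection assumption is exactly what prevents the pathology of Example~\ref{ex:extraction-not-minimizer}, where nondistinct overlaps let incompatible atoms coexist. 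Once the global atoms are in hand, each $u^{(\ell)}$ is feasible and, because $\langle f, y^*\rangle = f_{\min}$ splits additively over the atoms with each $f_i+p_i\ge 0$, every $u^{(\ell)}$ must attain $f_{\min}$, hence is a minimizer. I would carry out parts (i)--(ii) in full detail and then present part (iii) as a direct adaptation of the RIP-gluing argument already invoked for Theorem~\ref{tm:finite_sosc}, noting that the only structural change is replacing the SOSC-driven finiteness of minimizers with the a~priori finiteness of $V_{\re}(h_i)$.
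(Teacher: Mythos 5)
Your overall architecture matches the paper's: reduce to the dense split problems \reff{eq:split_pop} via Assumption~\ref{eq:f+p>=0}, invoke the finite-variety results of \cite{Nie13Finite} blockwise, and reuse the arguments of Theorem~\ref{tm:finite_sosc}(ii)--(iii) together with Theorem~\ref{thm:spar2full:mom} for the RIP gluing; indeed the paper's proofs of parts (ii) and (iii) are literally ``the same as for Theorem~\ref{tm:finite_sosc}'' with $\mathcal{X}_{\Dt_i}\subseteq V_{\re}(h_i)$ replacing the finiteness of the minimizer set. However, your part (i) contains a genuine error. You claim that, by Theorem~\ref{thm:sepa:match}(i), tightness is \emph{equivalent} to the exact memberships $f_i+p_i\in\idea_{\Dt_i}[h_i]_{2k}+\qm_{\Dt_i}[g_i]_{2k}$, and that these follow from a ``dense finite-variety representation theorem'' asserting that any polynomial nonnegative on $K_{\Dt_i}$ lies in the truncated ideal plus quadratic module when $V_{\re}(h_i)$ is finite. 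No such theorem holds, and the claim is false without the real-radical hypothesis: take $h_i=(x_1^2)$, so $V_{\re}(h_i)=\{0\}=K_{\Dt_i}$, and consider $f_i+p_i=x_1$, which is nonnegative on $K_{\Dt_i}$; if $x_1=\sigma+q\,x_1^2$ with $\sigma$ SOS, then $\sigma(0)=0$ forces $\sigma'(0)=0$ (the origin is a global minimizer of $\sigma$), contradicting $\sigma'(0)=1$. This is precisely why the theorem asserts \reff{fminachieved:finite_var} only under the additional real-radical assumption, and why the paper warns, right after Theorem~\ref{thm:sepa:match}, that tightness does not imply \reff{eq:f-fmin_in_IQ_2k} when the SOS optimum in \reff{eq:spa_sos} is not attained. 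Your route would ``prove'' \reff{fminachieved:finite_var} unconditionally, which is false.

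The correct argument --- and the one the paper gives --- is the $\eps$-perturbed one in the spirit of Theorem~\ref{thm:sepa:notattain}: the proof of Theorem~1.1 of \cite{Nie13Finite} yields a degree $N_i$ \emph{independent of} $\eps$ such that $f_i+p_i+\eps\in\idea_{\Dt_i}[h_i]_{2N_i}+\qm_{\Dt_i}[g_i]_{2N_i}$ for every $\eps>0$; summing over $i$ and using $p_1+\cdots+p_m+f_{\min}=0$ shows that $\gamma=f_{\min}-m\eps$ is feasible for \reff{eq:spa_sos} at order $N=\max_i N_i$, whence $f_k^{spa}=f_{\min}$ for all $k\ge N$ after letting $\eps\to0$. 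The uniformity of $N_i$ in $\eps$ is the entire content of the finite-variety hypothesis, and it is absent from your write-up. The same $\eps$-certificates are also what make your part (ii) rigorous: a pseudo-moment vector does not see nonnegativity on $K_{\Dt_i}$, so the key inequality $\langle f_i+p_i,y^*_{\Dt_i}\rangle\ge 0$ must be deduced from $\langle f_i+p_i+\eps,y^*_{\Dt_i}\rangle\ge 0$ (valid because the certificate has degree at most $2k$ and $(y^*_{\Dt_i})_0=1$), not from ``$f_i+p_i\ge0$ attains its minimum $0$ on $K_{\Dt_i}$'' as you suggest. With these repairs, your parts (ii) and (iii) coincide with the paper's proof.
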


The proof of Theorem~\ref{tm:finite_variety} is given in Subsection~\ref{pf_tm:finite_variety}.
Unlike the dense case, if we only assume that every real variety $V_{\re}(h_i)$ is finite,
then the sparse Moment-SOS hierarchy of (\ref{eq:spa_sos})-(\ref{eq:spa_mom})
may not be tight (as shown in Example~\ref{ex:extraction-not-minimizer}).
The sparsity pattern there ($\Dt_1 = \{1,2\}$, $\Dt_2 = \{2,3\}$, $\Dt_3 = \{1,3\}$)
does not satisfy the RIP.
However, if in addition, we assume the RIP, then Assumption~\ref{eq:f+p>=0}
holds and the sparse Moment-SOS hierarchy (\ref{eq:spa_sos})-(\ref{eq:spa_mom}) is tight, as shown below.

For each $t = 2,\ldots,m$, denote the sets
\begin{eqnarray}
\label{hatDt:t-1}
\widehat{\Dt}_{t-1} &\coloneqq&  \Dt_1 \cup \cdots \cup \Dt_{t-1}, \quad
I_t\coloneqq \widehat{\Dt}_{t-1} \cap \Delta_t, \\
\label{KhatDt:t-1}
K_{\widehat{\Dt}_{t-1}} &\coloneqq& \{
x_{\widehat{\Dt}_{t-1} }:  \xdt{i} \in K_{\Dt_i},
\enspace i=1,\ldots, t-1 \}.
\end{eqnarray}
For a set $S\subseteq \re^{\Gamma}$ with $\Gamma\supseteq I$, we denote the projection
\[ S_{I} \,\coloneqq\, \{ x_{I}: x_{\Gamma}\in S \}. \]

\begin{thm}\label{tm:ripfiniteass}
Assume that $K \ne \emptyset$, all $K_{\Delta_{i}}$ are compact, and
$\Delta_1,\ldots,\Delta_m$ satisfy the RIP. If the projections
$(K_{\widehat{\Dt}_{t-1}})_{I_t}$
and $(K_{\Delta_t})_{I_t}$
are both finite sets for all $2\le t\le m$,
then Assumption~\ref{eq:f+p>=0} holds.
\end{thm}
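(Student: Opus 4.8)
The plan is to prove the statement by induction on the number $m$ of sparsity blocks, peeling off the last block $\Delta_m$ at each step. The base case $m=1$ is immediate: take $p_1 = -f_{\min}$, so that $f_1 + p_1 = f - f_{\min} \ge 0$ on $K = K_{\Delta_1}$. For the inductive step, the key observation is that the private variables indexed by $\Delta_m \setminus I_m$ occur only in block $m$, so one can first minimize $f_m$ over them with the shared coordinates $x_{I_m}$ held fixed. Concretely, for each value $v$ in the finite set $B \coloneqq (K_{\Delta_m})_{I_m}$, I would define the value function
\[
\psi(v) \coloneqq \min\{ f_m(\xdt{m}) : \xdt{m} \in K_{\Delta_m}, \ x_{I_m} = v \},
\]
where $x_{I_m}$ denotes the $I_m$-subvector of $\xdt{m}$; this minimum is attained and finite because $K_{\Delta_m}$ is compact and $f_m$ is continuous.

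The central step is to realize $\psi$, together with a barrier on the unextendable values, by a single polynomial. By the RIP \reff{rip} applied to $j = m$, we have $I_m \subseteq \Delta_s$ for some $s \le m-1$, so any polynomial in $x_{I_m}$ lies in $\re[\xdt{s}]$. Since both $A \coloneqq (K_{\widehat{\Dt}_{m-1}})_{I_m}$ and $B$ are finite, I can interpolate to obtain $q \in \re[x_{I_m}]$ with $q(v) = \psi(v)$ for $v \in B$ and $q(v) = M$ for $v \in A \setminus B$, where $M$ is a constant fixed below. Setting $p_m \coloneqq -q$, the equality $q = \psi$ on $B$ gives $f_m + p_m = f_m - q \ge 0$ on $K_{\Delta_m}$, as required for the last block.

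I would then form the reduced $(m-1)$-block problem on $K_{\widehat{\Dt}_{m-1}}$ with objective $\widehat{f} \coloneqq f_1 + \cdots + f_{m-1} + q$, absorbing $q$ into block $s$ (so $\widehat{f}_s = f_s + q \in \re[\xdt{s}]$). This reduced problem inherits all hypotheses: the prefix $\Delta_1, \ldots, \Delta_{m-1}$ satisfies the RIP, each $K_{\Delta_i}$ stays compact, $K_{\widehat{\Dt}_{m-1}} \ne \emptyset$, and finiteness of $(K_{\widehat{\Dt}_{t-1}})_{I_t}$ and $(K_{\Delta_t})_{I_t}$ holds for $2 \le t \le m-1$. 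The crux is that its minimum value $\widehat{f}_{\min}$ equals $f_{\min}$. The inequality $\widehat{f}_{\min} \le f_{\min}$ follows by restricting a global minimizer $x^* \in K$ to $\widehat{\Dt}_{m-1}$ and using $\psi(x^*_{I_m}) \le f_m(x^*_{\Delta_m})$. For the reverse I argue pointwise on $K_{\widehat{\Dt}_{m-1}}$: when $x_{I_m} \in A \cap B$, I glue $x$ with a minimizer attaining $\psi(x_{I_m})$ along the shared coordinates $I_m$ to build a feasible $\tilde{x} \in K$ with $f(\tilde{x}) = \widehat{f}(x)$, whence $\widehat{f}(x) \ge f_{\min}$; when $x_{I_m} \in A \setminus B$, the point admits no feasible completion in block $m$, and choosing $M$ large enough that $f_1 + \cdots + f_{m-1} + M \ge f_{\min}$ on the compact set $K_{\widehat{\Dt}_{m-1}}$ forces $\widehat{f}(x) \ge f_{\min}$. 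Hence $\widehat{f}_{\min} = f_{\min}$.

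Finally, the induction hypothesis yields $\widehat{p}_1, \ldots, \widehat{p}_{m-1}$ with $\sum_{i=1}^{m-1}\widehat{p}_i + f_{\min} = 0$ and $\widehat{f}_i + \widehat{p}_i \ge 0$ on $K_{\Delta_i}$. Setting $p_i \coloneqq \widehat{p}_i$ for $i \notin \{s, m\}$, $p_s \coloneqq \widehat{p}_s + q$, and $p_m \coloneqq -q$ gives $f_i + p_i = \widehat{f}_i + \widehat{p}_i \ge 0$ on each $K_{\Delta_i}$ with $i \le m-1$, together with $f_m + p_m \ge 0$ on $K_{\Delta_m}$, while $\sum_{i=1}^m p_i = \sum_{i=1}^{m-1}\widehat{p}_i = -f_{\min}$, so $\sum_i p_i + f_{\min} = 0$. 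This is exactly Assumption~\ref{eq:f+p>=0}. The main obstacle I anticipate is the gluing/barrier bookkeeping that secures $\widehat{f}_{\min} = f_{\min}$: it is precisely here that both finiteness hypotheses are used, the finiteness of $B$ to represent the value function $\psi$ by a polynomial and the finiteness of $A$ to install a polynomial barrier on the shared values that admit no feasible completion in block $m$.
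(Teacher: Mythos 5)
Your proof is correct, and it follows essentially the same strategy as the paper's: induction on the number of blocks, peeling off $\Dt_m$, using finiteness of the two projections to build an interpolating polynomial in the shared variables $x_{I_m}$, gluing feasible points along $I_m$, and invoking the RIP to absorb that polynomial into an earlier block $\Dt_s$. The only substantive difference is the interpolation target: the paper first isolates a two-block lemma (Lemma~\ref{lm:block2}) whose polynomial equals the \emph{negative value function} $-F_1$ of the first block on all of $(K_{\widehat{\Dt}_{m-1}})_{I_m}$ and equals $F_2 - f_{\min}$ on the remaining points of $(K_{\Dt_m})_{I_m}$, which normalizes both block minimums to zero and requires no barrier; you instead interpolate the \emph{last} block's value function $\psi$ on $(K_{\Dt_m})_{I_m}$ and install a large constant $M$ on the shared values admitting no feasible completion, so your reduced problem keeps minimum value $f_{\min}$ rather than $0$. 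Both devices use the two finiteness hypotheses in exactly the same way, your case analysis ($A\cap B$ via gluing, $A\setminus B$ via the barrier, with $M \ge f_{\min} - \min_{K_{\widehat{\Dt}_{m-1}}}(f_1+\cdots+f_{m-1})$ available by compactness) is sound, and the final bookkeeping $p_s = \widehat{p}_s + q$, $p_m = -q$ closes the induction correctly, so this is a valid mirrored variant of the paper's argument rather than a different route.
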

The proof of Theorem~\ref{tm:ripfiniteass} is given in Subsection~\ref{sc:proof_tm46}.
The following result is implied by Theorems~\ref{tm:finite_variety} and \ref{tm:ripfiniteass}.

\begin{thm}
Suppose that $K\ne\emptyset$, each real variety $V_{\re}(h_i)$ is a finite set, and
$\Delta_1,\ldots,\Delta_m$ satisfy the RIP.
Then, Assumption~\ref{eq:f+p>=0} holds and all
conclusions of Theorem~\ref{tm:finite_variety} hold.
\end{thm}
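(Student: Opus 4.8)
The plan is to deduce this statement as a direct corollary of the two preceding theorems, so the whole task reduces to verifying that the hypotheses of Theorem~\ref{tm:ripfiniteass} are met and then quoting Theorem~\ref{tm:finite_variety}. First I would observe that $K_{\Dt_i} \subseteq V_{\re}(h_i)$, since every point of $K_{\Dt_i}$ satisfies $h_i(\xdt{i}) = 0$. As each $V_{\re}(h_i)$ is finite by hypothesis, every constraining set $K_{\Dt_i}$ is finite, hence compact. This supplies the compactness hypothesis of Theorem~\ref{tm:ripfiniteass}, while $K \ne \emptyset$ and the RIP are assumed directly.

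Next I would check the two finiteness-of-projection conditions appearing in Theorem~\ref{tm:ripfiniteass}. The set $(K_{\Dt_t})_{I_t}$ is the coordinate projection of the finite set $K_{\Dt_t}$ onto $I_t \subseteq \Dt_t$, hence finite. For $(K_{\widehat{\Dt}_{t-1}})_{I_t}$, recall from \reff{KhatDt:t-1} that $K_{\widehat{\Dt}_{t-1}}$ consists of those vectors $x_{\widehat{\Dt}_{t-1}}$ whose every subvector $\xdt{i}$ (for $i \le t-1$) lies in the finite set $K_{\Dt_i}$. Thus $K_{\widehat{\Dt}_{t-1}}$ is contained in the gluing of finitely many finite sets and has at most $\prod_{i=1}^{t-1} |K_{\Dt_i}|$ elements, so it is finite, and its projection onto $I_t$ is finite as well. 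Therefore all hypotheses of Theorem~\ref{tm:ripfiniteass} hold, and I conclude that Assumption~\ref{eq:f+p>=0} is satisfied.

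Finally, with Assumption~\ref{eq:f+p>=0} in hand and each $V_{\re}(h_i)$ finite (which is exactly the standing hypothesis of Theorem~\ref{tm:finite_variety}), I would invoke Theorem~\ref{tm:finite_variety} verbatim to obtain all of its conclusions (i)--(iii). The only genuinely nontrivial input is Theorem~\ref{tm:ripfiniteass} itself, which is where the RIP is used in an essential way to manufacture the separating polynomials $p_i$; the present argument contributes nothing beyond the elementary bookkeeping that finiteness of the varieties forces finiteness of all the relevant projected sets. Accordingly I do not anticipate any real obstacle here: the subtlety of the sparse setting, illustrated by Example~\ref{ex:extraction-not-minimizer} where the RIP fails and Assumption~\ref{eq:f+p>=0} breaks down, has already been absorbed into the two cited theorems.
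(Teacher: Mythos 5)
Your proposal is correct and follows exactly the route the paper intends: the paper states this result without proof, remarking only that it is implied by Theorems~\ref{tm:finite_variety} and \ref{tm:ripfiniteass}, and your argument supplies precisely the missing bookkeeping (finiteness of each $K_{\Dt_i}\subseteq V_{\re}(h_i)$, hence compactness, and finiteness of the projected sets $(K_{\widehat{\Dt}_{t-1}})_{I_t}$ and $(K_{\Dt_t})_{I_t}$) needed to invoke those two theorems. No gaps.
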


\section{The Schm\"{u}dgen type sparse relaxations}
\label{sc:schmudgen}

We write the tuples $g_i,h_i$ as in \reff{higi:tuple}.
The Schm\"{u}dgen type sparse SOS relaxation for solving \reff{sparse:pop} is
\be\label{eq:sch_sos}
\left\{ \baray{rcl}
f_{k}^{smg} \coloneqq & \max &  \gamma  \\
& \st  &  f- \gamma  \in  \ideal{h}_{spa, 2k} + \pre{g}_{spa, 2k}.
\earay \right.
\ee
The dual of \reff{eq:sch_sos}
is the Schm\"{u}dgen type sparse moment relaxation
\be\label{eq:sch_mom}
\left\{ \baray{ccl}
f_k^{smm}  \coloneqq   & \min &  \langle f, y \rangle = \langle f_1, y_{\Dt_1} \rangle + \cdots + \langle f_m, y_{\Dt_m} \rangle  \\
&  \st   &  \mathscr{V}_{h_i}^{spa, 2k}[y] = 0 \  (i \in [m]), \\
&     &  L_{g_{i,J}}^{spa, k}[y] \succeq 0 \ (i\in[m],\,J \subseteq [s_i] ), \\
&     &  y_0=1, \ y \in \re^{\U_k}.
\earay \right.
\ee
We refer to \reff{eq:qm_spa} and \reff{loc:mat:vec}
for the above notation. For a subset $J\subseteq [s_i]$, denote
$
g_{i, J} \coloneqq \prod_{j \in J} g_{i,j}.
$
For the case $J = \emptyset$, $g_{i,\emptyset} = 1$
and $L_{g_{i,\emptyset}}^{spa, k}[y]$ becomes the sparse moment matrix
$M_{\Dt_{i}}^{(k)}[y_{\Dt_{i}}]$.
Similar to Theorem~\ref{thm:sepa:match},
the theorem below follows from Lemma~\ref{lm:sep}.

\begin{thm}
\label{thm:schm:match}
For the Schm\"{u}dgen type relaxations \reff{eq:sch_sos}-\reff{eq:sch_mom}, we have:
\begin{enumerate}[(i)]
\item For each $k \ge k_0$, it holds
\be \label{f-fm:IQ:Schm}
f-f_{\min} \in \ideal{h}_{spa,2k} + \pre{g}_{spa,2k}
\ee
if and only if there exist polynomials
$p_i  \in \re[\xdt{i}]$ such that
\be
\label{eq:sep_pi_sch}
\boxed{
\begin{array}{c}
\ p_1  + \cdots + p_m  + f_{\min} =0   , \\
f_i + p_i \in \idea_{\Dt_i}[h_i]_{ 2k} +\po_{\Dt_i}[g_i]_{ 2k}, \, i \in [m].
\end{array}
}
\ee

\item When \reff{eq:sep_pi_sch} holds for some $k$,
the minimum value $f_{\min}$ of (\ref{sparse:pop}) is achievable
if and only if there is a common zero point in $K$ for all $f_i + p_i$.
\end{enumerate}
\end{thm}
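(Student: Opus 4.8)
The plan is to obtain both parts directly from Lemma~\ref{lm:sep}, exactly as Theorem~\ref{thm:sepa:match} was derived, the only change being that the preordering replaces the quadratic module throughout. For part~(i) I would set $\gamma = f_{\min}$ and take
\[
G = \ideal{h}_{spa,2k} + \pre{g}_{spa,2k}, \qquad G_i = \idea_{\Dt_i}[h_i]_{2k} + \po_{\Dt_i}[g_i]_{2k}.
\]
By the definitions \reff{eq:idl_spa} and \reff{eq:qm_spa}, one has $G = G_1 + \cdots + G_m$ with each $G_i \subseteq \re[\xdt{i}]$, so the hypotheses of Lemma~\ref{lm:sep} are met verbatim, and the asserted equivalence of \reff{f-fm:IQ:Schm} with \reff{eq:sep_pi_sch} follows at once.

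For part~(ii) I would transcribe the argument of Theorem~\ref{thm:sepa:match}(ii), the only new ingredient being the observation that every element of $G_i = \idea_{\Dt_i}[h_i]_{2k} + \po_{\Dt_i}[g_i]_{2k}$ is nonnegative on $K_{\Dt_i}$: the ideal part vanishes on $V_{\re}(h_i) \supseteq K_{\Dt_i}$, while the preordering part is a sum of products $g_{i,J} = \prod_{j \in J} g_{i,j}$ against squares and is therefore nonnegative wherever all $g_{i,j} \ge 0$, in particular on $K_{\Dt_i}$; hence $f_i + p_i \ge 0$ on $K_{\Dt_i}$. For the forward direction I would take a minimizer $u \in K$, evaluate the polynomial identity $p_1 + \cdots + p_m + f_{\min} = 0$ at $u$ to get $\sum_i p_i(\udt{i}) = -f_{\min}$, and combine with $f(u) = f_{\min}$ to obtain $\sum_i \big(f_i(\udt{i}) + p_i(\udt{i})\big) = 0$; since every summand is nonnegative, each must vanish, so $u$ is a common zero of all $f_i + p_i$. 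The reverse direction runs the same computation backward: a common zero $u \in K$ yields $\sum_i f_i(\udt{i}) = -\sum_i p_i(\udt{i}) = f_{\min}$, so $f(u) = f_{\min}$ and the minimum is attained.

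I do not expect a genuine obstacle here, since the preordering shares with the quadratic module the two structural features on which the proof rests: it decomposes blockwise as $\pre{g}_{spa,2k} = \po_{\Dt_1}[g_1]_{2k} + \cdots + \po_{\Dt_m}[g_m]_{2k}$ with each summand lying in $\re[\xdt{i}]$ (which is what Lemma~\ref{lm:sep} requires), and each $\po_{\Dt_i}[g_i]_{2k}$ is nonnegative on $K_{\Dt_i}$ (which is what part~(ii) requires). The single point meriting a moment's care is confirming this nonnegativity for the products $g_{i,J}$, which is immediate from $g_{i,j} \ge 0$ on $K_{\Dt_i}$; beyond that, the argument is a verbatim copy of the quadratic-module case.
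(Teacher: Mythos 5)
Your proof is correct and takes essentially the same approach as the paper: the paper derives both parts by invoking Lemma~\ref{lm:sep} with the preordering replacing the quadratic module, exactly mirroring the proof of Theorem~\ref{thm:sepa:match}. The one point you single out for care---that every element of $\idea_{\Dt_i}[h_i]_{2k} + \po_{\Dt_i}[g_i]_{2k}$ is nonnegative on $K_{\Dt_i}$---is indeed the only ingredient beyond the quadratic-module case, and your verification of it is correct.
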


For the dense case (i.e., $m=1$), if the feasible set is finite,
the Schm\"{u}dgen type dense Moment-SOS hierarchy is tight; see \cite[Theorem~4.1]{Nie13Finite}.
Interestingly, this result can be extended to the sparse case.

\begin{thm}
\label{thm:schm:finitevariety}
Suppose Assumption~\ref{eq:f+p>=0} holds and each $K_{\Dt_i}$ is a finite set. Then, we have:
\begin{enumerate}[(i)]
\item The Schm\"{u}dgen type sparse Moment-SOS hierarchy of
\reff{eq:sch_sos}-\reff{eq:sch_mom} is tight, i.e.,
$f_{k}^{smg} = f_{\min}$ when $k$ is big enough.

\item When $k$ is large enough, every minimizer $y^*$ of \reff{eq:sch_mom}
satisfies the flat truncation: there exists a degree $t\in [k_0,k]$ such that for every $i$,
\be \label{FT_smg}
r_i = \rank \, M_{\Dt_i}^{(t)}[y^*_{\Dt_i}] = \rank \, M_{\Dt_i}^{(t-d_i)}[y^*_{\Dt_i}],
\ee
where $d_i$ is as in \reff{degreedi}.
Therefore, the decomposition \reff{decomp_i} holds and
each point $x^*$ as in Theorem~\ref{optmin}
is a minimizer of \reff{sparse:pop}.

\item
Suppose $\{\Dt_1, \ldots, \Dt_m\}$ is a connected cover of $[n]$
(see the definition before Theorem~\ref{thm:spar2full:mom}) and satisfies the RIP.
Assume \reff{FT_smg} holds for all $i$.
Let $\mathcal{X}_{\Dt_i}$ be the set as in \reff{suppxdti}.
For all $i \ne j$ with $\Dt_{i} \cap \Dt_j \ne \emptyset$, suppose the projection
\[
\mathfrak{p}_{ij}(\mathcal{X}_{\Dt_i}) = \{\mathfrak{p}_{ij}(u^{(i,1)}), \ldots,
\mathfrak{p}_{ij}(u^{(i,r_i)}) \}
\]
is a set of $r_i$ distinct points. Then, when $t$ is big enough, we have for all $i,j$,
\be\label{FTyDt_ij}
r \coloneqq r_1 = \cdots = r_m =
\rank \, M_{\Dt_{ij} }^{(t)}[y^*_{\Dt_{ij}}] =
\rank  \, M_{\Dt_{ij}}^{(t-1)}[y^*_{\Dt_{ij}}],
\ee
and there exist $u^{(1)}, \ldots, u^{(r)} \in K$ such that (see \reff{tms:spar:2t} for the notation)
\be\label{dec_5.7}
\boxed{
\begin{gathered}
y^* |_{spa, 2t} = \lambda_1[u^{(1)}]_{spa, 2t} + \cdots + \lambda_r[u^{(r)}]_{spa, 2t}, \\
\lambda_1 >0, \ldots, \lambda_r>0, \quad   \lambda_1 + \cdots + \lambda_r  =  1.
\end{gathered}
}
\ee
Moreover, $u^{(1)}, \ldots, u^{(r)}$ are minimizers of \reff{sparse:pop}.

\end{enumerate}
\end{thm}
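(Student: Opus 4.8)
The plan is to reduce the Schmüdgen-type claim to a combination of the separation Lemma~\ref{lm:sep}, the dense finite-set result \cite[Theorem~4.1]{Nie13Finite}, and the gluing argument already used in Theorem~\ref{tm:finite_variety}(iii). First I would use Assumption~\ref{eq:f+p>=0}: there exist $p_i\in\re[\xdt{i}]$ with $p_1+\cdots+p_m+f_{\min}=0$ and $f_i+p_i\ge 0$ on $K_{\Dt_i}$. For part~(i), the point is to certify $f_i+p_i\in\idea_{\Dt_i}[h_i]_{2k}+\po_{\Dt_i}[g_i]_{2k}$ for each $i$ when $k$ is large. Since each $K_{\Dt_i}$ is a \emph{finite} set, the polynomial $f_i+p_i$ is nonnegative on a finite semialgebraic set, so by the dense Schmüdgen finite-set theorem \cite[Theorem~4.1]{Nie13Finite} applied in the variables $\xdt{i}$, there is an order $k_i$ for which $f_i+p_i\in\idea_{\Dt_i}[h_i]_{2k_i}+\po_{\Dt_i}[g_i]_{2k_i}$. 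Taking $k\ge\max_i k_i$ and invoking Lemma~\ref{lm:sep} with $\gm=f_{\min}$, $f_i\mapsto f_i$, $G_i=\idea_{\Dt_i}[h_i]_{2k}+\po_{\Dt_i}[g_i]_{2k}$, I obtain \reff{f-fm:IQ:Schm}, hence $f^{smg}_k=f_{\min}$ by Theorem~\ref{thm:schm:match}.

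For part~(ii), I would argue that the minimizers of the Schmüdgen moment relaxation \reff{eq:sch_mom} satisfy flat truncation block-by-block. Because each $K_{\Dt_i}$ is finite, the truncated moment problem for each subvector $y^*_{\Dt_i}$ is a finite atomic moment problem; the localizing constraints $L_{g_{i,J}}^{spa,k}[y]\succeq 0$ force the support of any representing measure of $y^*_{\Dt_i}$ into $K_{\Dt_i}$. The standard finite-convergence-plus-flat-truncation argument (as in \cite{Nie13Finite,nie2013certifying}) then gives, for $k$ large, a degree $t\in[k_0,k]$ realizing \reff{FT_smg} for every $i$; this is the step where I expect to reuse the dense machinery verbatim on each block. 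Once \reff{FT_smg} holds, the decomposition \reff{decomp_i} follows from \cite{henrion2005detecting,Lau05}, and Theorem~\ref{optmin} (with $f^{smm}_t=f^{smm}_k$, which holds since both equal $f_{\min}$) yields that any $x^*$ with $\xdt{i}^*\in\mc{X}_{\Dt_i}$ minimizes \reff{sparse:pop}.

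Part~(iii) is essentially identical in structure to Theorem~\ref{thm:spar2full:mom} and Theorem~\ref{tm:finite_variety}(iii), so I would invoke that reasoning directly. Under the RIP and the connected-cover hypothesis, the blockwise atomic decompositions \reff{decomp_i} must be glued into global atoms $u^{(1)},\ldots,u^{(r)}\in K$. The key is consistency on overlaps $\Dt_{ij}$: the distinct-projection hypothesis on $\mathfrak{p}_{ij}(\mathcal{X}_{\Dt_i})$ guarantees that the marginal tms $y^*_{\Dt_{ij}}$ has exactly $r$ atoms, which forces the overlap flat-truncation rank condition \reff{FTyDt_ij} for large $t$ and pins down a \emph{unique} matching of atoms across blocks. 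The RIP then lets me glue sequentially along a chordal elimination order to produce the global decomposition \reff{dec_5.7}; that each $u^{(\ell)}\in K$ and is a minimizer follows since its projections lie in the respective $K_{\Dt_i}$ and achieve $f_{\min}$.

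\textbf{Main obstacle.} The crux is the gluing in part~(iii): matching blockwise atoms into globally consistent points. Flat truncation on each $K_{\Dt_i}$ alone is insufficient (Example~\ref{ex:extraction-not-minimizer} shows this fails without RIP). The argument must show that the distinct-projection assumption plus RIP propagates the overlap rank condition \reff{FTyDt_ij} and that the sequential gluing over the chordal structure is well-defined and produces points in $K$ rather than spurious combinations; controlling this consistency, and verifying that $t$ can be taken uniformly large across all overlaps, is where the real work lies.
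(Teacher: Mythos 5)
Your overall architecture---handle each block with the dense finite-set machinery of \cite{Nie13Finite}, then glue the blockwise atomic decompositions through Theorem~\ref{thm:spar2full:mom}---is the same as the paper's, but your proof of part (i) rests on a claim that is false in general: that $f_i+p_i$, being nonnegative on the finite set $K_{\Dt_i}$, must belong to $\idea_{\Dt_i}[h_i]_{2k_i}+\po_{\Dt_i}[g_i]_{2k_i}$ for some finite order $k_i$. Theorem~4.1 of \cite{Nie13Finite} does not give this; it gives finite convergence of \emph{values}, i.e., there is a fixed order $N_i$ such that for every $\eps>0$,
\[
f_i+p_i+\eps \,\in\, \idea_{\Dt_i}[h_i]_{2N_i}+\po_{\Dt_i}[g_i]_{2N_i},
\]
and the case $\eps=0$ (attainment) can fail. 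A one-variable counterexample: take $h_i$ empty, $g_i=\big(-x^2(x-1)^2\big)$, so $K_{\Dt_i}=\{0,1\}$, and consider $x(1-x)$, which vanishes on $K_{\Dt_i}$. If $x(1-x)=\sig_0-\sig_1\, x^2(x-1)^2$ with $\sig_0,\sig_1$ SOS, then at $x=1+t$ we get $\sig_0(1+t)=-t+O(t^2)<0$ for small $t>0$, contradicting $\sig_0\ge0$; so exact membership fails at \emph{every} order. Consequently \reff{f-fm:IQ:Schm}---which your argument would establish via Lemma~\ref{lm:sep} and Theorem~\ref{thm:schm:match}---can fail even when the hierarchy is tight, so your route proves a strictly stronger statement than is true. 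The paper's proof instead sums the $\eps$-perturbed certificates to obtain $f-(f_{\min}-m\eps)\in \ideal{h}_{spa,2N}+\pre{g}_{spa,2N}$ with $N=\max_i N_i$ independent of $\eps$, hence $f_{\min}-m\eps\le f_N^{smg}\le f_{\min}$ for all $\eps>0$, which forces $f_N^{smg}=f_{\min}$. You need this fixed-degree, all-$\eps$ formulation.

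Part (ii) has a related soft spot: before you can ``reuse the dense machinery verbatim on each block,'' you must know that $y^*_{\Dt_i}$ is a \emph{minimizer} of the block Schm\"udgen moment relaxation with objective $f_i+p_i$; the flat truncation result \cite[Theorem~5.6.7]{nie2023moment} is a property of minimizers, not of arbitrary feasible truncated sequences, and a priori $y^*_{\Dt_i}$ has no representing measure, so calling it a ``finite atomic moment problem'' is circular. The paper supplies this step as in Theorem~\ref{tm:finite_sosc}(ii): the $\eps$-certificates give $\langle f_i+p_i,\,y^*_{\Dt_i}\rangle\ge -\eps$ for all $\eps>0$, hence $\ge 0$, while part (i) together with Assumption~\ref{eq:f+p>=0} gives $\sum_{i=1}^m\langle f_i+p_i,\,y^*_{\Dt_i}\rangle = f_k^{smm}-f_{\min}=0$, forcing each term to vanish. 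Your part (iii) does match the paper: since $\mathcal{X}_{\Dt_i}\subseteq K_{\Dt_i}$ is finite, the overlap ranks stabilize for large $t$, the flat truncation $y^*_{\Dt_{ij}}|_{2t}$ has a unique representing measure which matches atoms across blocks, and Theorem~\ref{thm:spar2full:mom} completes the gluing.
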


The proof of Theorem~\ref{thm:schm:finitevariety} is given in Subsection~\ref{pf_thm:schm:finitevariety}.
The flat truncation (\ref{FT_smg}) is sufficient for the dense Schm\"{u}dgen type moment relaxation (i.e., $m=1$) to be tight,
while this condition (\ref{FT_smg}) alone cannot guarantee tightness for the sparse case,
because it may not imply \reff{dec_5.7}.

To conclude this section, we show that the Schm\"{u}dgen type sparse Moment-SOS hierarchy of
(\ref{eq:sch_sos})-(\ref{eq:sch_mom}) is tight when all $K_{\Dt_i}$ are finite sets and the RIP holds.
This is an interesting generalization of Theorem~4.1 in \cite{Nie13Finite}.

\begin{thm}\label{cor:ripass}
Suppose all $K_{\Delta_{i}}$ are finite sets and $K\ne\emptyset$.
If $\Delta_1,\ldots,\Delta_m$ satisfy the RIP, then
Assumption~\ref{eq:f+p>=0} holds and all conclusions
in Theorem~\ref{thm:schm:finitevariety} hold.
\end{thm}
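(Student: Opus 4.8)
The plan is to deduce Theorem~\ref{cor:ripass} directly from Theorem~\ref{thm:schm:finitevariety} by verifying that its hypothesis, namely Assumption~\ref{eq:f+p>=0}, holds under the stated conditions. Since Theorem~\ref{thm:schm:finitevariety} already delivers all three conclusions once Assumption~\ref{eq:f+p>=0} is in force and each $K_{\Dt_i}$ is finite, the entire content of this corollary reduces to producing polynomials $p_i \in \re[\xdt{i}]$ satisfying \reff{assump}. The key observation is that a finite set $K_{\Dt_i}$ is automatically compact, so the hypotheses of Theorem~\ref{tm:ripfiniteass} are within reach; the remaining task is to check that the finiteness of each $K_{\Dt_i}$, together with the RIP, forces the projections $(K_{\widehat{\Dt}_{t-1}})_{I_t}$ and $(K_{\Dt_t})_{I_t}$ to be finite for all $2 \le t \le m$.

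First I would record that each $K_{\Dt_i}$ being a finite set implies it is compact, and that $K \ne \emptyset$ is assumed. Next I would verify the finiteness of the two families of projections appearing in Theorem~\ref{tm:ripfiniteass}. The projection $(K_{\Dt_t})_{I_t}$ is the image of the finite set $K_{\Dt_t}$ under $\mathfrak{p}$, hence finite. For $(K_{\widehat{\Dt}_{t-1}})_{I_t}$, I would note from \reff{KhatDt:t-1} that $K_{\widehat{\Dt}_{t-1}}$ consists of tuples $x_{\widehat{\Dt}_{t-1}}$ whose blocks $\xdt{i}$ lie in the finite sets $K_{\Dt_i}$ for $i = 1, \ldots, t-1$; since there are only finitely many choices for each block, $K_{\widehat{\Dt}_{t-1}}$ itself is finite, and therefore so is its projection onto the coordinates indexed by $I_t$. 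With all these projections finite and the RIP in hand, Theorem~\ref{tm:ripfiniteass} yields Assumption~\ref{eq:f+p>=0}.

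Once Assumption~\ref{eq:f+p>=0} is established, I would invoke Theorem~\ref{thm:schm:finitevariety}: its hypotheses are exactly Assumption~\ref{eq:f+p>=0} together with finiteness of each $K_{\Dt_i}$, both of which now hold, and the RIP and connected-cover requirements for its part~(iii) are supplied by the hypothesis of the corollary. Thus all three conclusions of Theorem~\ref{thm:schm:finitevariety} follow verbatim. I do not expect any genuine obstacle here, since the corollary is designed as a packaging of the two preceding theorems; the only point requiring a small argument is the finiteness of $K_{\widehat{\Dt}_{t-1}}$, which is a direct consequence of its defining description as a set cut out by finitely many finite-block constraints. If one wished to be fully self-contained, one could alternatively note that the proof of Theorem~\ref{tm:ripfiniteass} is really driving the existence of the $p_i$, and the finite-set case is simply the cleanest instance in which its finiteness hypotheses are met.
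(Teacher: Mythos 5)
Your proposal is correct and follows essentially the same route as the paper, whose proof is simply to combine Theorem~\ref{tm:ripfiniteass} (to get Assumption~\ref{eq:f+p>=0}) with Theorem~\ref{thm:schm:finitevariety}. The only difference is that you spell out the (easy) verification that finiteness of each $K_{\Dt_i}$ gives compactness and finiteness of the projections $(K_{\widehat{\Dt}_{t-1}})_{I_t}$ and $(K_{\Dt_t})_{I_t}$, which the paper leaves implicit in the phrase ``follows directly.''
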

\begin{proof}
This follows directly from Theorem~\ref{tm:ripfiniteass}
and Theorem~\ref{thm:schm:finitevariety}.
\end{proof}

\section{Some examples}
\label{sc:example}

This section provides numerical experiments for
the sparse Moment-SOS hierarchy of (\ref{eq:spa_sos})-(\ref{eq:spa_mom}).
For Examples~\ref{ex:by_tm_33}-\ref{ex:4minimizers},
we use {\tt Yalmip} \cite{yalmip}
to implement sparse Moment-SOS relaxations, and apply {\tt Gloptipoly 3} \cite{GloPol3}
to check flat truncation conditions and extract minimizers.
For sparse convex optimization problems in Examples~\ref{ex:sosconv_exp}-\ref{ex:quartic},
we apply the software {\tt TSSOS} \cite{vmjw21,TSSOS,CSTSSOS}.
All semidefinite programs are solved by the software {\tt Mosek} \cite{mosek}.
The computation is implemented in Julia 1.10.3/MATLAB 2023b,
in an Apple MacBook Pro Laptop in macOS 14.2.1 with 12$\times$Apple M3 Pro CPU and RAM 18GB.
For neatness, only four decimal digits are displayed for computational results.

\begin{eg}
\label{ex:by_tm_33}
Consider the sparse optimization problem
\be
\left\{
\baray{cl}
\min\limits_{x \in \re^3} & f(x) =
\underbrace{x_1^2x_2(x_1^2+x_2-1)}_{f_1} +
\underbrace{x_2^2x_3(x_2^2+x_3-1)}_{f_2} \\
\st & 1-x_1^2-x_2^2 \ge 0, \quad 1- x_2^2 - x_3^2 \ge 0,
\earay
\right.
\ee
with $\Dt_{1} = \{1,2\}$ and $\Dt_2 = \{2, 3\}$.
By solving (\ref{eq:spa_sos})-(\ref{eq:spa_mom}) with $k=3$, we get $f^{spa}_3\approx 0.0666$.
The condition (\ref{eq:p+fmin_in_idl}) is satisfied for
\begin{eqnarray*}
p_1(\xdt{1}) &\approx& 0.04149+0.0426x_2-0.1275x_2^2-0.1107x_2^3+0.2197x_2^4 \\
&& +0.0669x_2^5-0.1037x_2^6,
\end{eqnarray*}
and
$p_2 = -f_{\min}-p_1$.
For $t=k=3$, (\ref{decomp_i}) holds for both $\Dt_1$ and $\Dt_2$, and
\[ \mc{X}_{\Dt_1} = \{ (-0.5100, 0.4798), (0.5100, 0.4798)\},\quad
 \mc{X}_{\Dt_2} = \{ (0.4798,0.3849)\}. \]
By Theorem~\ref{optmin}, we get two minimizers:
$
(\pm 0.5100, 0.4798, 0.3849).
$
\end{eg}

\begin{eg}
\label{ex:by_tm_34}
Consider the sparse optimization problem
\be
\left\{
\baray{cl}
\min\limits_{x \in \re^3} & f(x) =
\underbrace{x_1^2+4x_1x_2 }_{f_1} +
\underbrace{4x_2x_3 - x_3^2}_{f_2} \\
\st & 1-x_1^2-x_2^2 \ge 0, \quad 1- x_2^2 - x_3^2 \ge 0,
\earay
\right.
\ee
with $\Dt_{1} = \{1,2\}$ and $\Dt_2 = \{2, 3\}$.
Then, (\ref{eq:p+fmin_in_idl}) is satisfied with
\[
p_1(\xdt{1}) = 1.0000x_2^2+1.0000,\quad
p_2(\xdt{2}) = -1.0000x_2^2+3.0000.
\]
We solve (\ref{eq:spa_sos})-(\ref{eq:spa_mom}) with the order $k=2$.
For the minimizer $y^*$ of (\ref{eq:spa_mom}), the flat truncation condition (\ref{FTspa})
is satisfied for both $\Dt_1$ and $\Dt_2$ with $r_1 = r_2 =2$, and (\ref{FTDt_ij}) is satisfied with
\[
\rank \, M_{\Dt_{12} }^{(1)}[y^*_{\Dt_{12}}] =\rank \, M_{\Dt_{12} }^{(2)}[y^*_{\Dt_{12}}] = 2.\]
By Theorem~\ref{thm:spar2full:mom}, we get $f_{\min} = -4.0000$ and two minimizers:
\[
\pm(0.7071,-0.7071,0.7071).
\]
\end{eg}

\begin{eg}
\label{ex:4minimizers}
Consider the sparse optimization problem
\be
\left\{
\baray{cl}
\min\limits_{x \in \re^3} & f(x) = \underbrace{x_1x_2x_3-x_1x_2}_{f_1} +\underbrace{x_2x_3x_4-x_3x_4}_{f_2}   \\
\st  &  x_1^2 = x_1, \, x_2^2 = x_2, \, x_3^2 = x_3, \, x_4^2=x_4, \\
& x_1 + x_2 + x_3 \ge 1, x_2 + x_3 + x_4 \ge 1,
\earay
\right.
\ee
with $\Dt_{1} = \{1, 2, 3\}$, $\Dt_{2} = \{2, 3, 4\}$, and
\[
\begin{gathered}
h_1 = (x_1^2 - x_1, x_2^2 - x_2,x_3^2 - x_3),\quad g_1 = (x_1+x_2+x_3-1), \\
h_2 = (x_2^2 - x_2,x_3^2 - x_3,x_4^2 - x_4),\quad g_2 = (x_2+x_3+x_4-1).  \\
\end{gathered}
\]
We solve the sparse relaxation (\ref{eq:spa_sos})-(\ref{eq:spa_mom}) with $k=2$.
For the minimizer $y^*$ of (\ref{eq:spa_mom}), the condition (\ref{decomp_i})
holds with $t=k=2$ for both $\Dt_1$ and $\Dt_2$.
By Theorem~\ref{optmin}, we get $f_{\min} = f^{smo}_2=-1.0000$ and four minimizers:
\[ \begin{gathered}
(0.0000,0.0000,1.0000,1.0000), \quad
(1.0000,0.0000,1.0000,1.0000), \\
(1.0000,1.0000,0.0000,0.0000), \quad
(1.0000,1.0000,0.0000,1.0000). \\
\end{gathered}\]
Moreover, the condition (\ref{eq:p+fmin_in_idl}) is satisfied for
\begin{eqnarray*}
p_1(\xdt{1}) &\approx & 0.5001+0.4195x_2-0.2165x_3 -0.1484x_2x_3+0.1559x_2^2\\
&&-0.4111x_3^2-0.8565x_2^3+0.0166x_2^2x_3+0.1820x_2x_3^2-0.7216x_3^3\\
&&+0.6349x_2^2x_3^2-0.4251x_2^3x_3-0.2599x_2x_3^3+0.7810x_2^4+0.8491x_3^4,
\end{eqnarray*}
and $p_2 = f_{\min} - p_1$.
\end{eg}

\begin{eg}
\label{ex:sosconv_exp}
Consider the optimization problem:
\be
\left\{
\baray{cl}
\min\limits_{x \in \re^3} & f(x) = f_1(x_1,x_2) + f_2(x_2,x_3) + f_3(x_1,x_3)\\
\st & 1-x_1^4-x_2^4 \ge 0,\
  1- x_2^4-x_3^4 \ge 0,\  1-x^4_1-x_3^4 \ge 0.
\earay
\right.
\ee
In the above, $f_1\,\coloneqq\,x_1^6+x_2^6+x_1^3x_2^3+x_1$,
\[
\begin{gathered}
f_2\,\coloneqq\,x_2^6+x_3^6+x_2^3x_3^3-x_2,\quad
f_3\,\coloneqq\,x_1^6+x_3^6+x_1^3x_3^3+2x_3.
\end{gathered}
\]
The sparse relaxation (\ref{eq:spa_sos})-(\ref{eq:spa_mom})
is tight for all $k\ge3$. Solving it with $k=3$, we get
$
f^{spa}_{3}\approx -2.2561.
$
We get $f_{\min} = f^{spa}_{3}$ and the minimizer
$(-0.6036,0.6852,-0.7092).$
\end{eg}

\begin{eg}
\label{ex:qcqp}
Consider the convex quadratic optimization problem:
\be  \label{sparse:QCQP}
\left\{ \baray{cll}
  \displaystyle \min_{x\in\re^n} &  \displaystyle  \sum _{i=1}^m \big[ \xdt{i} ^T Q_i \xdt{i}+b_i ^T \xdt{i}\big] \\
 \st  &  1- \xdt{i}^T B_i \xdt{i}-c_i^T \xdt{i}  \geq 0 ,\  i=1,\ldots, m.
\earay \right.
\ee
We set $m=n$ and each block $\Delta_i$ consists of $w$ elements as
\be\label{eq:qcqp_delta_i}
\Dt_i = \left\{ \begin{array}{ll}
\{ i,\ldots,i+w-1 \} & \mbox{for } i\le n-w+1,\\
\{ i,\ldots,n,1,\ldots, w-n+i-1 \} & \mbox{for } i> n-w+1.
\end{array}
\right.
\ee
The $b_i$ and $c_i$ are randomly generated vectors obeying Gaussian distribution.
Each $Q_i = R_i^TR_i$ is randomly generated, where the entries of $R_i$ obey Gaussian distribution.
The matrix $B_i$ is generated in the same way.
By Theorem~\ref{tm:sosconvex}, the sparse relaxation (\ref{eq:spa_sos})-(\ref{eq:spa_mom}) is tight for all $k\ge1$.
We report the computational time (in seconds) for solving (\ref{sparse:QCQP}) by the sparse relaxation (\ref{eq:spa_sos})-(\ref{eq:spa_mom}) and dense relaxation in \cite{Lasserre2001} with order $k=1$.
In Table~\ref{tab:qcqp}, we display the time by the sparse relaxation (\ref{eq:spa_sos})-(\ref{eq:spa_mom}) on the left and the time by the dense relaxation on the right, inside each parenthesis.
For instance, when $n=100$ and $w=5$, the sparse relaxation took $0.04$ second,
while the dense relaxation took $2.24$ seconds.
When $n=500$, the dense relaxations
cannot be solved since the computer is out of memory (oom).

\begin{table}[htb]
\caption{The computational time (in seconds) for solving (\ref{sparse:QCQP})
by the sparse Moment-SOS relaxations (left) and the dense Moment-SOS relaxation (right).}
\label{tab:qcqp}
\centering
\begin{tabular}{|l|c|c|c|c|c}\hline
& $n=100$ & $n=200$ & $n=300$ & $n=500$  \\
\hline
 $w=5$ &  (0.04, 2.24) & (0.10, 82.01) & (0.17, 2205.36) & (0.35, oom)  \\
\hline
$w=10$ &(0.11, 2.58) &  (0.28, 87.02)& (0.44, 2221.67)& (0.88, oom)  \\
\hline
$w=20$ & (0.54, 2.76)  & (1.81, 87.37) & (2.03, 2477.07)& (3.72, oom) \\
\hline
\end{tabular}
\end{table}
\end{eg}

\begin{eg}
\label{ex:quartic}
Consider the sparse convex polynomial optimization
\be  \label{sparse:quartic}
\left\{ \baray{cll}
  \displaystyle \min_{x\in\re^n} &  \displaystyle  \sum _{i=1}^m \Big[ b_i^T\xdt{i} + \xdt{i}^TQ_i\xdt{i} + \big({\xdt{i}^{[2]}}\big)^T D_i \xdt{i}^{[2]}  \Big] \\
 \st  &  1 - c_i^T\xdt{i} - \xdt{i}^TB_i\xdt{i} - \big({\xdt{i}^{[2]}}\big)^T H_i \xdt{i}^{[2]}\ge 0,\ i=1\ddd m.
\earay \right.
\ee
In the above, for $\xdt{i} = \big[ x_{j_1} \ x_{j_2}\ \ldots\ x_{j_{n_i}} \big]^T$, we denote
\[\xdt{i}^{[2]} \,\coloneqq\, \big[ x^2_{j_1} \ x^2_{j_2}\ \ldots\ x^2_{j_{n_i}} \big]^T.\]
We set $m=n$ and each block $\Dt_i$ is given as in (\ref{eq:qcqp_delta_i}).
All $b_i, c_i, Q_i$ and $B_i$ are randomly generated in the same way as in Example~\ref{ex:qcqp}.
Each matrix $D_i = R_i^TR_i$ is randomly generated with the entries
of $R_i$ obeying the uniform distribution on $[0,1]$, and each matrix $H_i$
is generated in the same way.
For these choices, the objective is a sum of SOS-convex polynomials
and the constraining polynomials are SOS-concave.
This can be shown as in \cite[Example~7.1.4]{nie2023moment}.
By Theorem~\ref{tm:sosconvex}, the sparse relaxation (\ref{eq:spa_sos})-(\ref{eq:spa_mom}) is tight for all $k\ge2$.
We report the computational time (in seconds) for solving (\ref{sparse:quartic})
by the sparse relaxation (\ref{eq:spa_sos})-(\ref{eq:spa_mom})
and dense relaxation in \cite{Lasserre2001} with $k=2$.
In Table~\ref{tab:quartic}, we display the computational time by the sparse relaxation (\ref{eq:spa_sos})-(\ref{eq:spa_mom}) on the left,
and the time by the dense relaxation on the right, inside each parenthesis.
When $n$ is $50$ or $100$, the dense relaxations cannot be solved
since the computer is out of memory (oom).

\begin{table}[htb]
\centering
\caption{Computational time (in seconds) for solving (\ref{sparse:quartic})
by the sparse Moment-SOS relaxations (left) and by the dense Moment-SOS relaxation (right).}
\label{tab:quartic}
\begin{tabular}{|l|c|c|c|c|}\hline
& $n=20$ & $n=30$ & $n=50$ & $n=100$  \\
\hline
$w=5$ & (0.10, 17.53)  &  (0.15, $1928.50$) &(0.26, oom)  & (0.67, oom)   \\
\hline
$w=8$ & (0.39, 17.76)  &  (0.63, $1978.11$) &(1.05, oom)  & (2.16, oom)   \\
\hline
$w=10$ & (1.09, 18.82) &  (1.74, $2134.91$) & (3.36, oom) &  (6.96, oom) \\ \hline
\end{tabular}
\end{table}

\end{eg}

We conclude this section with an example such that the sparse Moment-SOS hierarchy (\ref{eq:spa_sos})-(\ref{eq:spa_mom}) is not tight,
while the dense one is tight.

\begin{eg}\label{ex:no_pi}
Consider the optimization problem
\be\label{eq_box_f1+f2}
\left\{
\begin{array}{cl}
\displaystyle \min_{x \in \re^3} & f(x) = \underbrace{x_1^2 + (x_1x_2-1)^2}_{f_1} +
\underbrace{(x_2x_3)^2 + (x_3-1)^2}_{f_2} \\
\st & 1-x_1^2 \ge 0,\ 1-x_2^2 \ge 0,\, 1-x_3^2 \ge 0.
\end{array}\right.
\ee
Here, $\Dt_{1} = \{1, 2\}$ and $\Dt_{2} = \{2, 3\}$.
For fixed $x_2\in[-1,1]$, the minimum value of $f_1(x_1,x_2)$ over $x_1\in[-1,1]$ is
$f_{1, \min} = (x_2^2 + 1)^{-1}$,
which is attained at $x_1=x_2(1+x_2^2)^{-1}$.
Similarly, the minimum value of $f_2(x_2, x_3)$
over $x_3\in[-1,1]$ is $f_{2, \min} = x_2^2(x_2^2 + 1)^{-1},$
which is attained at $x_3 = (1+x_2^2)^{-1}$.
Hence, we have
\[
f_{\min} = f_{1, \min} + f_{2, \min} = 1.
\]
However, Assumption~\ref{eq:f+p>=0} fails to hold. Suppose otherwise it holds,
then there exist $p_1 \in \re[x_1, x_2]$ and $p_2 \in \re[x_2, x_3]$ such that
$p_1  + p_2  + 1 =0$ and
\be\label{eg_suff_cond}
f_1 + p_1 \ge 0 \ \mbox{on} \ [-1,1]^2, \quad f_2 + p_2 \ge 0 \ \mbox{on} \ [-1,1]^2.
\ee
Since $x_2$ is the only joint variable of $\xdt{1}$ and $\xdt{2}$, both $p_1$ and $p_2$ depend only on $x_2$.
So, by the first condition of (\ref{eg_suff_cond}), we have
\be\label{f1min+p1>=0}
f_{1, \min} + p_1(x_2) = (x_2^2+1)^{-1} + p_1(x_2) \ge 0 \quad \forall x_2 \in [-1,1].
\ee
Since $p_2 = -1-p_1$, the second condition of (\ref{eg_suff_cond}) implies
\be\label{f2min-p1>=0}
f_{2, \min} -1- p_1(x_2) = -(x_2^2+1)^{-1}- p_1(x_2)\ge 0 \quad \forall x_2 \in [-1,1].
\ee
Combining (\ref{f1min+p1>=0}) and (\ref{f2min-p1>=0}), we get
\[
p_1(x_2) =  -(x_2^2+1)^{-1}\quad \forall x_2 \in [-1,1].
\]
However, the above cannot hold since $p_1$ is a polynomial.
Therefore, Assumption~\ref{eq:f+p>=0} does not hold for (\ref{eq_box_f1+f2}), and hence the sparse Moment-SOS hierarchy of (\ref{eq:spa_sos})-(\ref{eq:spa_mom}) is not tight.
On the other hand, one can verify that
\[
f(x) - f_{\min} = (x_1x_2 + x_3 - 1)^2 + (x_1 - x_2x_3)^2 ,
\]
which shows that the dense Moment-SOS relaxation is tight.
\end{eg}

\section{Some proofs}
\label{sc:some_proofs}

\subsection{Proof of Theorem~\ref{optmin}}
\label{pf_optmin}

\begin{proof}(Proof of Theorem~\ref{optmin})
Since \reff{decomp_i} holds, we have
\[
M_{\Dt_i}^{(t)}[y^*_{\Dt_i}] = \rho_i [\xdt{i}^*]_{t}[\xdt{i}^*]_{t}^T + W_{\Dt_i}
\]
for a positive scalar $\rho_i >0$ and a moment matrix $W_{\Dt_i} \succeq 0$.
Let
\[
\rho \coloneqq \min_{1 \le i \le m} \rho_i.
\]
Since $(y^*_{\Dt_i})_0=1$, we have $0 < \rho \le \rho_i \le 1$ for every $i\in[m]$, hence
\[
\widehat{W}_{\Dt_i} \, \coloneqq \,
(\rho_i-\rho) [\xdt{i}^*]_{t}[\xdt{i}^*]_{t}^T + W_{\Dt_i} \succeq 0.
\]
Note that $\widehat{W}_{\Dt_i}$ and $W_{\Dt_i}$ are also moment matrices
for some tms's.

For the case $\rho=1$, we have $\widehat{W}_{\Dt_i}=W_{\Dt_i}=0$ for all $i$.
So, $\mathcal{X}_{\Dt_i}$ consists of the single point $\xdt{i}^*$,
and hence $y^*_{\Dt_i}|_{2t} = [\xdt{i}^*]_{2t}$ for all $i\in[m]$.
Then, one can see that
\[
f_k^{smo} = \langle f, y^* \rangle = \sum_{i=1}^m \langle f_i, y^*_{\Dt_i} \rangle
= \sum_{i=1}^m  f_i(\xdt{i}^*) = f(x^*).
\]
Since each $\xdt{i}^* \in K_{\Dt_i}$,
we know $x^*$ is a minimizer of \reff{sparse:pop}.

For the case $0<\rho<1$, define the new tms
\[
\hat{y} \coloneqq (\hat{y}_\af)_{\af \in \U_t} \quad \text{where each} \quad
\hat y_{\af}= \left(x_{\Dt_i}^*\right)^{\af} .
\]
Let $\tilde y \coloneqq  (\tilde y_\af)_{\af\in \U_t}$ be the tms such that
$y^* = \rho \hat y +(1-\rho)\tilde y$, then
\begin{eqnarray*}
	\mathscr{V}_{h_i}^{spa, 2t}[y^*] &=&  \rho \mathscr{V}_{h_i}^{spa, 2t}[\hat{y}] +
	(1-\rho)\mathscr{V}_{h_i}^{spa, 2t}[\tilde{y}], \\
	L_{g_i}^{spa, t}[y^*] &=& \rho L_{g_i}^{spa, t}[\hat{y}]
       +(1-\rho)L_{g_i}^{spa, t}[\tilde{y}], \\
	M_{\Dt_i}^{(t)}[y^*_{\Dt_i}] &=& \rho M_{\Dt_i}^{(t)}[\hat{y}_{\Dt_i}]+
	(1-\rho)M_{\Dt_i}^{(t)}[\tilde{y}_{\Dt_i}].
\end{eqnarray*}
Both $\hat{y}$ and $\tilde{y}$ are feasible for \reff{eq:spa_mom}
with the relaxation order equal to $t$, because
\begin{eqnarray*}
	\mathscr{V}_{h_i}^{spa, 2t}[\tilde{y}] &=& \frac{1}{1-\rho}
       \left(\mathscr{V}_{h_i}^{spa, 2t}[y^*]-
	\rho  \mathscr{V}_{h_i}^{spa, 2t}[\hat{y}] \right) =0, \\
	L_{g_i}^{spa, t}[\tilde{y}] &=& \frac{1}{1-\rho}\left(L_{g_i}^{spa, t}[y^*]-
	\rho L_{g_i}^{spa, t}[\hat{y}] \right) \succeq 0, \\
	M_{\Dt_i}^{(t)}[\tilde{y}_{\Dt_i}] &=& \frac{1}{1-\rho}\left(M_{\Dt_i}^{(t)}[y^*_{\Dt_i}] -
	\rho  M_{\Dt_i}^{(t)}[\hat{y}_{\Dt_i}] \right) \succeq 0.
\end{eqnarray*}
Since $f_t^{smo} =f_k^{smo}$, the truncation $y^*|_{spa,2t}$
is a minimizer for \reff{eq:spa_mom} with $k = t$, so
\[
\langle f, y^* \rangle \le \langle f, \hat{y} \rangle,  \quad
\langle f, y^* \rangle \le \langle f, \tilde{y} \rangle.
\]
On the other hand, it also holds that
\[
f_t^{smo} = \langle f, y^* \rangle = \rho \langle f, \hat{y} \rangle
+ (1 - \rho) \langle f, \tilde{y} \rangle.
\]
Since $0<\rho<1$, we must have
\[
f_t^{smo} = \langle f, y^* \rangle = \langle f, \hat{y} \rangle = \langle f, \tilde{y} \rangle.
\]
Note that $f(x^*) = \langle f, \hat{y} \rangle$, so $f(x^*) = f_t^{smo}$.
Since $f_k^{smo} \le f_{\min}$, we have
\[
f(x^*) = f_t^{smo} \le f_{\min} \le f(x^*).
\]
This shows that $x^*$ is a minimizer of \reff{sparse:pop}.
\end{proof}

\subsection{Proof of Theorem~\ref{thm:spar2full:mom}}
\label{pf_thm:spar2full:mom}

\begin{proof}(Proof of Theorem~\ref{thm:spar2full:mom})
Pick arbitrary $i \ne j$ with $\Dt_{i} \cap \Dt_j \ne \emptyset$,
then \reff{FTDt_ij} holds, by the given assumption. The flat truncation \reff{FTspa}
implies the decomposition \reff{decomp_i}. Note that
\[
\begin{gathered}
	y^*_{\Dt_{ij}} |_{2t} = \lambda_{i,1}[\mathfrak{p}_{ij}(u^{(i,1)})]_{2t} + \cdots + \lambda_{i,r}[\mathfrak{p}_{ij}(u^{(i,r)})]_{2t}, \\
		y^*_{\Dt_{ji}} |_{2t} = \lambda_{j,1}[\mathfrak{p}_{ji}(u^{(j,1)})]_{2t} + \cdots + \lambda_{j,r}[\mathfrak{p}_{ji}(u^{(j,r)})]_{2t}.
\end{gathered}
\]
Note $y^*_{\Dt_{ij}} |_{2t} = y^*_{\Dt_{ji}} |_{2t}$,
since they are common entries of $y^*$.
The condition \reff{FTDt_ij} implies that $y^*_{\Dt_{ij}} |_{2t}$ and $y^*_{\Dt_{ji}} |_{2t}$
have the same unique representing measure, whose support consists of $r$ distinct points.
This is shown in \cite{curto2005truncated,henrion2005detecting,Lau05}.
So, $\mathfrak{p}_{ij}(u^{(i,1)})$, \ldots, $\mathfrak{p}_{ij}(u^{(i,r)})$ are distinct points in  $\mathfrak{p}_{ij}(K_{\Dt_{i}})$, and $\mathfrak{p}_{ji}(u^{(j,1)})$, 
\ldots, $\mathfrak{p}_{ji}(u^{(j,r)})$ 
are distinct points in $\mathfrak{p}_{ji}(K_{\Dt_{j}})$.
Since the representing measure is unique,
\[
\{\mathfrak{p}_{ij}(u^{(i,1)}), \ldots, \mathfrak{p}_{ij}(u^{(i,r)})\} \, =  \,
\{\mathfrak{p}_{ji}(u^{(j,1)}), \ldots, \mathfrak{p}_{ji}(u^{(j,r)})\}.
\]
Up to permutation, we have
\be\label{decomp_eq}
\begin{array}{c}
	\mathfrak{p}_{ij}(u^{(i,1)}) = \mathfrak{p}_{ji}(u^{(j,1)}), \ldots,
	\mathfrak{p}_{ij}(u^{(i,r)}) = \mathfrak{p}_{ji}(u^{(j,r)}), \\
	\lmd_{i,1} = \lmd_{j,1}, \ldots, \lmd_{i,r} = \lmd_{j,r}.
\end{array}
\ee

Next, we show that there exist points $u^{(1)}, \ldots, u^{(r)} \in \re^n$
such that \reff{decomp_spa_general} holds, by induction on $m$.
For the base step (i.e., $m=2$), let $u^{(l)}$ be such that ($l = 1 \ddd r$)
\[
(u^{(l)})_k = (u^{(1,l)})_k \ \text{ for } \ k \in \Dt_1,
\quad (u^{(l)})_k = (u^{(2,l)})_k \ \text{ for } \ k \in \Dt_2.
\]
Since (\ref{decomp_eq}) holds for $i=1$ and $j=2$, the above $u^{(l)}$ is well-defined.
The decomposition \reff{decomp_spa_general} holds for such $u^{(l)}$.
For the inductive step, assume the conclusion holds for $m-1$, and we prove it for $m$.
By the induction, there exist
\[
\bar{u}^{(1)}, \ldots, \bar{u}^{(r)} \in \re^{\bar{n}},
\quad \text{where} \quad  \bar{n} = |\Dt_1 \cup \cdots \cup \Dt_{m-1}|,
\]
such that each $u^{(i,l)}$ is the projection of $\bar{u}^{(l)}$ from $\re^{\bar{n}}$ to $\re^{\Dt_i}$.
Since $\{\Dt_1, \ldots, \Dt_m\}$ is a connected cover of $[n]$ and satisfies the RIP,
we have $\emptyset \ne \Dt_m\cap (\cup_{i=1}^{m-1}\Dt_i) \subseteq \Dt_{i^*}$ for some $i^*\in[m-1]$.
Then, we construct vectors
$
u^{(1)}, \ldots, u^{(r)} \in \re^{n}
$
such that
\[
(u^{(l)})_k = (\bar{u}^{(l)})_k \quad \text{for} \quad k \in \Dt_1 \cup \cdots \cup \Dt_{m-1},
\]
\[
(u^{(l)})_k = (u^{(m,l)})_k \quad \text{for} \quad k \in \Dt_m.
\]
Since (\ref{decomp_eq}) holds for $i=i^*$ and $j=m$, the above $u^{(l)}$ is well-defined.
Therefore, every $u^{(i,l)}$ is the projection of $u^{(l)}$ from $\re^n$ to $\re^{\Dt_i}$.
This means that \reff{decomp_spa_general} holds.

Since $(u^{(j)} )_{\Dt_i} \in K_{\Dt_i}$ for all $i$,
we have $u^{(j)} \in K$ for all $j$.
So every $[u^{(j)}]_{spa,2t}$ is feasible for (\ref{eq:spa_mom}) with the relaxation order equal to $t$,
and every $[u^{(j)}]_{spa,2k}$ is feasible for (\ref{eq:spa_mom}) with the relaxation order equal to $k$.
Therefore,
\[  f_k^{smo} = \lip f,y^*|_{2t} \rip = \lip f,y^*|_{2k} \rip \le
\langle f,[u^{(j)}]_{spa,2k} \rangle = \langle f,[u^{(j)}]_{spa,2t} \rangle, \]
for all $j=1,\ldots,r$.
Moreover, the decomposition (\ref{decomp_spa_general}) implies
\[
f_k^{smo} = \lip f,y^*|_{2t} \rip = \lmd_1 \langle f,[u^{(1)}]_{spa,2t} \rangle + \cdots +
\lmd_r  \langle f,[u^{(r)}]_{spa,2t} \rangle.
\]
Since each $\lmd_i > 0$, the above implies that for all $j=1,\ldots,r$,
\[
f_k^{smo} = \lip f,y^*|_{2t} \rip =
\langle f,[u^{(j)}]_{spa,2t} \rangle = f(u^{(j)})\ge f_{\min}.
\]
Since $f_k^{smo} \le f_{\min}$, all $u^{(1)}\ddd u^{(r)}$
must be minimizers of (\ref{sparse:pop}).
\end{proof}

\subsection{Proof of Theorem~\ref{tm:finite_sosc}}
\label{pf_tm:finite_sosc}

\begin{proof}(Proof of Theorem~\ref{tm:finite_sosc})
(i) Since each $\idea_{\Dt_i}[h_i]+\qm_{\Dt_i}[g_i]$ is archimedean,
the feasible sets $K_{\Dt_i}$ and $K$ are all compact.
So, \reff{sparse:pop} achieves its minimum value, say, at a minimizer $u \in K$.
Assumption~\ref{eq:f+p>=0} implies
\[
(f_1+p_1) + \cdots + (f_m+p_m) = f - f_{\min}.
\]
Since $f(u) = f_{\min}$ and each $f_i+p_i \ge 0$ on $K_{\Dt_i}$,
the minimum value of \reff{eq:split_pop} is $0$.
By \cite[Theorem~1.1]{Nie14},
if the LICQC, SCC, and SOSC hold at every minimizer of \reff{eq:split_pop},
then there is a relaxation order $N_i$ such that for all $\eps>0$,
\be
\label{eq:fi+pi=phi+si}
f_i+p_i- (-\eps) \in
\idea_{\Dt_i}[h_i]_{2N_i} + \qm_{\Dt_i}[g_i]_{2N_i}.
\ee
Let $N \coloneqq \max\{ N_1\ddd N_m\}$.
The above then implies
\[
f+p_1+\cdots+p_m + m\epsilon
\in \ideal{h}_{spa,2N}+\qmod{g}_{spa,2N}.
\]
By Assumption~\ref{eq:f+p>=0}, we have $p_1+\dots+p_m+f_{\min} = 0$,
so for all $k \ge N$
\begin{eqnarray*}
	f- (f_{\min} - m\epsilon) &=&  f+\sum_{i=1}^m p_i + m\epsilon -
	\Big(\sum_{i=1}^m p_i+f_{\min} \Big) \\
	& \in & \ideal{h}_{spa,2k}+\qmod{g}_{spa,2k}.
\end{eqnarray*}
Thus, for arbitrary $\eps > 0$, we have
$
f_{k}^{spa} \ge f_{\min} - m\eps.
$
On the other hand, $f_{k}^{spa} \le f_{\min}$,
so this forces $f_{k}^{spa} = f_{\min}$.

Moreover, if each $\idea_{\Dt_i}[h_i]$ is real radical,
then
\be
\label{eq:fi+pi=phi+si_radical}
f_i+p_i - 0 \in \idea_{\Dt_i}[h_i] + \qm_{\Dt_i}[g_i].
\ee
This follows from \cite[Theorem~9.5.3]{marshall2008positive}
and \cite[Theorem~3.1]{Nie14}. So, we get
\[
f+p_1+\ldots+p_m \in \ideal{h}_{spa}+\qmod{g}_{spa}.\]
By Assumption~\ref{eq:f+p>=0}, $p_1+\cdots+p_m+f_{\min}=0$, so
\[f-f_{\min} = f+\sum_{i=1}^m p_i - \Big(\sum_{i=1}^m p_i+f_{\min} \Big)
\in \ideal{h}_{spa}+\qmod{g}_{spa}.
\]

\smallskip \noindent
(ii) For each degree $k\ge k_0$, the $k$th order SOS relaxation for \reff{eq:split_pop} is
\be \label{eq:sos}
\left\{ \baray{cll}
\max &  \gamma  \\
\st  &  f_i+p_i - \gamma  \in  \idea_{\Dt_{i}}[h_i]_{2k} + \qm_{\Dt_i}[g_i]_{2k}.
\earay \right.
\ee
Its dual optimization is the moment relaxation
\be\label{eq:mom}
\left\{ \baray{cll}
\min &  \langle f_i+p_i, y_{\Dt_{i}} \rangle \\
\st   &  \mathscr{V}_{h_i}^{\Dt_i, 2k}[y_{\Dt_{i}}] = 0,
   L_{g_i}^{\Dt_i, k}[y_{\Dt_{i}}] \succeq 0, \\
&  (y_{\Dt_{i}})_0=1, \ M_{\Dt_i}^{(k)}[y_{\Dt_{i}}]\succeq 0,\\
& y_{\Dt_i} \in \re^{\N^{\Dt_i}_{2k}}.
\earay \right.
\ee
The minimum value of \reff{eq:split_pop} is $0$.
Since each $\idea_{\Dt_i}[h_i]+\qm_{\Dt_i}[g_i]$ is archimedean and the LICQC, SCC, SOSC hold at each optimizer of \reff{eq:split_pop},
the Moment-SOS hierarchy of \reff{eq:sos}-\reff{eq:mom} has finite convergence.
This is shown in \cite{Nie14}.
So, there exists $N_0$ such that for all $k \ge N_0$ and all $\eps > 0$,
\[
f_i + p_i -(-\eps) \in \idea_{\Dt_{i}}[h_i]_{2k} + \qm_{\Dt_{i}}[g_i]_{2k}.
\]
Since $y^*$ is a minimizer of \reff{eq:spa_mom}, $y^*_{\Dt_i}$ is feasible for \reff{eq:mom}. Hence,
\[
\langle f_i + p_i , y^*_{\Dt_i} \rangle + \eps = \langle f_i + p_i + \eps, y^*_{\Dt_i} \rangle \ge 0.
\]
Since $\eps > 0$ can be arbitrary, we get $\langle f_i + p_i , y^*_{\Dt_i} \rangle \ge 0$.
By item (i), it holds
\begin{eqnarray*}
	0 &=& f^{smo}_k - f_{\min} = \langle f, y^* \rangle - f_{\min} =
	\langle f - f_{\min}, y^* \rangle \\
	&=& \Big\langle \sum_{i=1}^{m} f_i + p_i , y^* \Big\rangle =
	\sum_{i=1}^{m} \langle f_i+p_i, y^*_{\Dt_i} \rangle.
\end{eqnarray*}
In the above, we have used $-f_{\min} = p_1 + \cdots + p_m$ by Assumption~\ref{eq:f+p>=0}.
So, $\langle f_i + p_i , y^*_{\Dt_i} \rangle = 0$ for every $i$.
This means that $y^*_{\Dt_i}$ is a minimizer for \reff{eq:mom}.
Therefore, the conclusion follows from
Theorem~3.3 of \cite{huang2023finite} and Theorem~\ref{optmin}.

\smallskip \noindent
(iii) Since LICQC, SCC and SOSC hold at each minimizer of \reff{eq:split_pop},
the set $S_i$ of all minimizers of \reff{eq:split_pop} is finite.
Note that $\mathcal{X}_{\Dt_i} \subseteq S_i$, 
so $\mathfrak{p}_{ij}(\mathcal{X}_{\Dt_i})$ is also a finite set.
Pick arbitrary $i \ne j$ with $\Dt_{i} \cap \Dt_j \ne \emptyset$,
then $\mathfrak{p}_{ij}(\mathcal{X}_{\Dt_i})$
is a set of $r_i$ distinct points, by the given assumption.
We know that $y^*_{\Dt_{ij}}$ has a representing measure whose support is $\mathfrak{p}_{ij}(\mathcal{X}_{\Dt_i})$. When $t$ is big enough, we must have
\[
\rank \, M_{\Dt_{ij} }^{(t)}[y^*_{\Dt_{ij}}] =
\rank \, M_{\Dt_{ij}}^{(t-1)}[y^*_{\Dt_{ij}}].
\]
This is because $\rank \, M_{\Dt_{ij} }^{(t)}[y^*_{\Dt_{ij}}]$
is uniformly bounded above by the cardinality of
$\mathfrak{p}_{ij}(\mathcal{X}_{\Dt_i})$.
So, the truncation $y^*_{\Dt_{ij}}|_{2t}$ is flat and
it has a  unique representing measure, say, $\mu_{ij}$.
Since $y^*_{\Dt_{ij}}|_{2t} = y^*_{\Dt_{ji}}|_{2t}$, we have $\mu_{ij} = \mu_{ji}$.
Hence, $\mathfrak{p}_{ij}(\mathcal{X}_{\Dt_i}) =  \mathfrak{p}_{ji}(\mathcal{X}_{\Dt_j})$.
Since $\mathfrak{p}_{ij}(\mathcal{X}_{\Dt_i})$ consists of $r_i$ distinct points,
we must have $r_i = r_j$. Therefore, $r_1 = \cdots = r_m = r$, since
$\{\Dt_1, \ldots, \Dt_m\}$ is a connected cover of $[n]$.
Hence, \reff{FTyij} holds.
The remaining conclusions follow from Theorem~\ref{thm:spar2full:mom}.
\end{proof}

\subsection{Proof of Theorem~\ref{tm:finite_variety}}
\label{pf_tm:finite_variety}

\begin{proof}(Proof of Theorem~\ref{tm:finite_variety})
(i) Since each $V_{\re}(h_i)$ is finite, there exists $N_i$ such that for all $\eps > 0$,
\[
f_i + p_i + \eps \in \idea_{\Dt_i}[h_i]_{2N_i} + \qm_{\Dt_i}[g_i]_{2N_i}.
\]
This is shown in the proof of Theorem~1.1 of \cite{Nie13Finite}.
Let $N \coloneqq \max\{N_1,\ldots, N_m\}.$
Then, for all $\eps>0$, we have
\begin{eqnarray*}
	f-(f_{\min}-m\eps) &  =  &  \sum_{i=1}^m \big[f_i + p_i + \eps\big] -
      \Big(\sum_{i=1}^m p_i +f_{\min} \Big) \\
	& \in   &  \ideal{h}_{spa,2N} + \qmod{g}_{spa,2N}.
\end{eqnarray*}
So, $\gamma = f_{\min}-m\eps$ is feasible for \reff{eq:spa_sos} with $k=N$.
Hence, for all $\eps>0$,
\[
f_{\min}-m\eps \le f_{N}^{spa} \le f_{\min}.
\]
This forces $f_{k}^{spa} = f_{\min}$ for all $k\ge N$.

\smallskip
Furthermore, if each $\idea_{\Dt_i}[h_i]$ is real radical, then
\[
f_i + p_i  \in \idea_{\Dt_i}[h_i]_{2N_i} + \qm_{\Dt_i}[g_i]_{2N_i},
\]
when $N_i$ is big enough.
This is implied by Proposition~5.6.4 of \cite{nie2023moment}.
So, \reff{fminachieved:finite_var} follows.

\smallskip \noindent
(ii) Consider the relaxations \reff{eq:sos}-\reff{eq:mom}.
Since $y^*$ is a minimizer of \reff{eq:spa_mom},
$y^*_{\Dt_i}$ is a minimizer of \reff{eq:mom}.
This can be similarly shown as for Theorem~\ref{tm:finite_sosc}(ii).
Since $V_{\re}(h_i)$ is finite, \reff{FTspa_finite_var}
must be satisfied, by Theorem~5.6.1 of \cite{nie2023moment}.

\smallskip \noindent
(iii) Note that $\mathcal{X}_{\Dt_i} \subseteq V_{\re}(h_i)$,
so $\mathfrak{p}_{ij}(\mathcal{X}_{\Dt_i})$ is also a finite set.
The remaining proof is the same as for Theorem~\ref{tm:finite_sosc}(iii).
\end{proof}

\subsection{Proof of Theorem~\ref{tm:ripfiniteass}}
\label{sc:proof_tm46}

We first prove Theorem~\ref{tm:ripfiniteass} for the case $m=2$.

\begin{lem}\label{lm:block2}
Consider (\ref{sparse:pop}) with $m=2$. Assume $f_{\min} > -\infty$,
its feasible set $K \ne \emptyset$
and the following two optimization problems are bounded below:
\be  \label{blocko}
\left\{ \baray{cll}
\displaystyle \min &     f_1(\xdt{1}) \\
  \st  &  h_1( \xdt{1} ) = 0, \\
      &  g_1( \xdt{1} ) \ge 0, \
\earay \right. \quad
\left\{ \baray{cll}
\displaystyle \min &     f_2(\xdt{2}) \\
  \st  &  h_2( \xdt{2} ) = 0 , \\
      &  g_2( \xdt{2} ) \ge 0. \
\earay \right.
\ee
If the projections $\mathfrak{p}_{12}(K_{\Delta_1})$ and $\mathfrak{p}_{21}(K_{\Delta_2})$
are both finite sets, then there exists $p\in \re[x_{\Delta_{12}}]$ such that
\be \label{ass(4.1):m=2}
f_1+p \ge 0 \,\  \mathrm{ on } \,\  K_{\Delta_1}, \quad
f_2-p-f_{\min} \ge 0 \,\  \mathrm{ on } \,\  K_{\Delta_2},
\ee
the minimal values of $f_1+p$ on $K_{\Delta_1}$ and
$f_2-p-f_{\min}$ on $K_{\Delta_2}$ are both zeros.
\end{lem}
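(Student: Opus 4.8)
The plan is to construct the polynomial $p\in\re[x_{\Delta_{12}}]$ explicitly on the finite common projection, and then interpolate. Let me write $P_1 \coloneqq \mathfrak{p}_{12}(K_{\Delta_1})$ and $P_2 \coloneqq \mathfrak{p}_{21}(K_{\Delta_2})$; both are finite subsets of $\re^{\Delta_{12}}$. The key observation is that $f_1+p$ and $f_2-p-f_{\min}$ couple only through the shared variables $x_{\Delta_{12}}$, so $p$ is a function of $x_{\Delta_{12}}$ alone and the two nonnegativity requirements decouple over the fiber structure. For each value $v$ of the shared variables, define
\[
\phi_1(v) \coloneqq \inf\{ f_1(\xdt{1}) : \xdt{1}\in K_{\Delta_1},\ \mathfrak{p}_{12}(\xdt{1})=v \},
\]
\[
\phi_2(v) \coloneqq \inf\{ f_2(\xdt{2}) : \xdt{2}\in K_{\Delta_2},\ \mathfrak{p}_{21}(\xdt{2})=v \}.
\]
Because the two problems in \reff{blocko} are bounded below, each $\phi_i(v)$ is finite (it equals $+\infty$ when the fiber is empty). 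I would then want $p$ to satisfy $p(v)\ge -\phi_1(v)$ for $v\in P_1$ and $p(v)\le \phi_2(v)-f_{\min}$ for $v\in P_2$.

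\textbf{The consistency inequality.}
The crux is to check that a choice of $p$ satisfying both sets of pointwise constraints exists. On the overlap $P_1\cap P_2$, I need $-\phi_1(v)\le \phi_2(v)-f_{\min}$, i.e.\ $\phi_1(v)+\phi_2(v)\ge f_{\min}$ for every shared $v$. This is exactly where $f_{\min}$ being the \emph{global} minimum enters: for any $v\in P_1\cap P_2$, picking near-optimal $\xdt{1}$ and $\xdt{2}$ in the respective fibers produces a feasible $x\in K$ (their shared coordinates agree at $v$, and the non-shared coordinates are free), so $f(x)=f_1(\xdt{1})+f_2(\xdt{2})\ge f_{\min}$; taking infima gives $\phi_1(v)+\phi_2(v)\ge f_{\min}$. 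For $v$ in only one of the two sets, one side imposes no constraint, so consistency is automatic there.

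\textbf{Interpolation and attainment.}
Given the pointwise-consistent target values, I would define $p$ on the finite set $P_1\cup P_2$ by any value in the (nonempty, by the above) admissible interval at each point—for instance $p(v)=-\phi_1(v)$ on $P_1$ and $p(v)=\phi_2(v)-f_{\min}$ on $P_2\setminus P_1$—and extend to a genuine polynomial in $\re[x_{\Delta_{12}}]$ by Lagrange/finite interpolation, which is always possible over a finite set of points. Then $f_1+p\ge 0$ on $K_{\Delta_1}$ and $f_2-p-f_{\min}\ge 0$ on $K_{\Delta_2}$ by construction. For the attainment claim, note $\min_{K_{\Delta_1}}(f_1+p)=\min_{v\in P_1}(\phi_1(v)+p(v))\le \phi_1(v_0)+p(v_0)=0$ at any $v_0$ where $p(v_0)=-\phi_1(v_0)$, and it is $\ge 0$ everywhere, so the minimum is exactly $0$; symmetrically for $f_2-p-f_{\min}$, where attainment at the global minimizer forces both block minima to be $0$ simultaneously. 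The main obstacle I anticipate is \textbf{justifying that the infima $\phi_i(v)$ are attained} (or handling the non-attained case via an $\epsilon$-argument), since $K_{\Delta_i}$ need not be compact under the stated hypotheses—only boundedness below is assumed—so I would either invoke compactness if available in the ambient theorem, or replace exact values by $\epsilon$-approximate fiber infima and absorb the slack, noting the finiteness of $P_1\cup P_2$ keeps the interpolation well-defined throughout.
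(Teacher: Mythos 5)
Your proposal is correct and is essentially the paper's own proof: your fiber value functions $\phi_1,\phi_2$ are exactly the paper's $F_1,F_2$ defined in \reff{firstblock}--\reff{secondblock}, your choice $p=-\phi_1$ on $P_1$ and $p=\phi_2-f_{\min}$ on $P_2\setminus P_1$ is precisely the paper's definition of $p$ via Lagrange interpolation on the finite set $\mathfrak{p}_{12}(K_{\Delta_1})\cup\mathfrak{p}_{21}(K_{\Delta_2})$, and your consistency inequality $\phi_1(v)+\phi_2(v)\ge f_{\min}$ on $P_1\cap P_2$ (with equality at some point, obtained by gluing fiber points into a feasible point of \reff{sparse:pop}) is the paper's identity \reff{eq:F1+F2}. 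The attainment concern you flag at the end is not a real obstacle: since $P_1\cup P_2$ is finite, the infimum of $f_1+p$ over $K_{\Delta_1}$ equals $\min_{v\in P_1}\left[\phi_1(v)+p(v)\right]=0$ whether or not the fiber infima are attained (and likewise for $f_2-p-f_{\min}$), so reading ``minimal value'' as infimum the argument closes without any $\epsilon$-modification --- the paper's own proof has exactly the same feature.
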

\begin{proof}
By the assumption, we can write that
\[
\mathfrak{p}_{12}(K_{\Delta_1}) \cup \mathfrak{p}_{21}(K_{\Delta_2}) =
\{u^{(1)},\ldots,u^{(D)}\}\subseteq \re^{{\Delta_{12}}}.
\]
Let $\varphi_{1}, \ldots, \varphi_{D} \in \re[x_{\Delta_{12}}]$
be interpolating polynomials such that
$\varphi_i(u^{(j)}) = 0$ for $i \ne j$ and
$\varphi_i(u^{(j)}) = 1$ for $i = j$.
Denote the sets
\be \label{sets:U0U1U2}
U_0\coloneqq\mathfrak{p}_{12}(K_{\Delta_1}) \cap \mathfrak{p}_{21}(K_{\Delta_2}),\enspace U_1\coloneqq\mathfrak{p}_{12}(K_{\Delta_1})\backslash U_0,\enspace U_2\coloneqq\mathfrak{p}_{21}(K_{\Delta_2})\backslash U_0.
\ee
Define the optimal value functions:
\begin{align}
\label{firstblock}
\left\{ \baray{rcl}
F_1(u) \coloneqq & \displaystyle \min &     f_1( \xdt{1} ) \\
& \st & \xdt{1} \in K_{\Dt_1}, \,  x_{\Delta_{12}}=u ,
\earay \right.
\\
\label{secondblock}
\left\{ \baray{rcl}
F_2(u) \coloneqq & \displaystyle \min &     f_2( \xdt{2} ) \\
& \st & \xdt{2} \in K_{\Dt_2}, \, x_{\Delta_{12}}=u .
\earay \right.
\end{align}
For each $u\in U_0\cup U_1$, the problem (\ref{firstblock})
is feasible and its optimal value is finite.
Similarly, for each $u\in U_0\cup U_2$, the problem (\ref{secondblock})
is feasible and its optimal value is finite.
For all $u\in U_0$, $F_1(u) + F_2(u) $ equals the minimum value of
\[
\left\{ \baray{cll}
 \displaystyle \min &    f_1(\xdt{1}) + f_2( \xdt{2} ) \\
 \st & \xdt{1} \in K_{\Dt_1}, \,  \xdt{2} \in K_{\Dt_2}, \,  x_{\Delta_{12}}=u.
\earay \right.
\]
Since (\ref{sparse:pop}) is feasible and $U_0$
is given as in \reff{sets:U0U1U2},  we have
\be\label{eq:F1+F2}
\min_{u\in U_0} [F_1(u)+F_2(u)] \,\, = \,\, f_{\min}.
\ee
For $i = 1, \ldots, D$, denote the values
\[
v_i\coloneqq\left\{
\begin{array}{rl}
-F_1 (u^{(i)})  &  \mathrm{~if~} u^{(i)}\in U_0\cup U_1, \\
F_2(u^{(i)}) -f_{\min}   & \mathrm{~if~} u^{(i)}\in  U_2 .
\end{array}
\right.
\]
Let $p \coloneqq  v_1\varphi_1 + \cdots + v_D\varphi_D$,
then $p\in \re[x_{\Delta_{12}}]$ and
\be\label{eq:pu}
p(u)=\left\{
\begin{array}{rl}
-F_1 (u)  &  \mathrm{~if~} u\in U_0\cup U_1, \\
F_2(u)  -f_{\min}    & \mathrm{~if~} u\in  U_2.
\end{array}
\right.
\ee
Observe the relations
\[
\min_{x_{\Delta_1}\in K_{\Delta_1}} \big[ f_1(x_{\Delta_1})+p(x_{\Delta_{12}}) \big] =
\min_{u\in \mathfrak{p}_{12}(K_{\Delta_1})} \big[ F_1(u)+p(u) \big],
\]
\[
\min_{x_{\Delta_2}\in K_{\Delta_2}} \big[ f_2(x_{\Delta_2})-p(x_{\Delta_{12}})-f_{\min} \big] =
\min_{u\in \mathfrak{p}_{21}(K_{\Delta_2})} \big[ F_2(u)-p(u)-f_{\min} \big].
\]
Since $\mathfrak{p}_{12}(K_{\Delta_1}) = U_0\cup U_1$, \eqref{eq:pu} implies
\[
\min_{u\in \mathfrak{p}_{12}(K_{\Delta_1})} [F_1(u)+p(u)] \,\, = \,\,  0.
\]
Since $\mathfrak{p}_{21}(K_{\Delta_2}) = U_0\cup U_2$, it holds
\[
\min_{u\in \mathfrak{p}_{21}(K_{\Delta_2})} [F_2(u)-p(u)-f_{\min}] \, = \,
\min_{u\in U_0\cup U_2} [F_2(u)-p(u)-f_{\min}].
\]
Also note that
\[
F_2(u) - p(u) = f_{\min} \,\, \mbox{if} \, \, u \in U_2, \quad
F_2(u) - p(u) = F_1(u) + F_2(u)  \,\, \mbox{if} \,\, u\in U_0.
\]
So, $ \min\limits_{u\in U_2} [F_2(u)-p(u)-f_{\min}] = 0$, and (\ref{eq:F1+F2}) implies
\[
\min_{u\in U_0} [F_2(u)-p(u)-f_{\min}]  \, = \,
\min_{u\in U_0} [F_1(u)+F_2(u)-f_{\min}] = 0.
\]
This completes the proof.
\end{proof}

In the following, we prove Theorem~\ref{tm:ripfiniteass} by induction.

\begin{proof}[Proof of Theorem~\ref{tm:ripfiniteass}.]
The conclusion holds when $m=2$, by Lemma~\ref{lm:block2}.
Suppose it holds for $m=k$. We prove it also holds for $m=k+1$.
The problem~\eqref{sparse:pop} can be viewed to have two sparsity blocks:
$\widehat{\Delta}_{m-1}$ and $\Dt_{m}$ (see \reff{hatDt:t-1} for the notation).
Consider the following two optimization problems:
\be  \label{blockom}
\left\{ \baray{cl}
  \min &  f_1(\xdt{1}) + \cdots + f_{m-1}(\xdt{m-1}) \\
 \st & \xdt{i} \in K_{\Dt_i}, \,\, i =1, \ldots,  m-1,
\earay \right.
\quad
\left\{ \baray{cl}
\min & f_m(\xdt{m})  \\
 \st & \xdt{m} \in K_{\Dt_m}.
\earay \right.
\ee
The feasible set for the above left optimization problem is
$K_{\widehat{\Dt}_{m-1}}$ (see \reff{KhatDt:t-1} for the notation).
By the assumption, both $K_{\widehat{\Dt}_{m-1}}$ and $K_{\Delta_m}$
are nonempty and compact; the optimal values of both optimization problems
in \reff{blockom} are finite.
By Lemma~\ref{lm:block2}, there exists $p\in \re[x_{\widehat{\Dt}_{m-1}}]\cap\re[x_{\Dt_m}]$ such that
\be\label{eq:m2blockm}\left\{
\begin{array}{ll}
 f_1+\cdots+f_{m-1} + p \ge 0\quad \mbox{on} \quad K_{\widehat{\Dt}_{m-1}},\\
 f_m - p -f_{\min}\ge 0\quad \mbox{on} \quad K_{\Delta_m}.
\end{array}\right.
\ee
Furthermore, the minimum values of $f_1+\cdots+f_{m-1} + p$ on $K_{\widehat{\Dt}_{m-1}}$
and $f_m - p -f_{\min}$ on $K_{\Delta_m}$ are both equal to $0$.
The RIP ensures that there exists $i^*\in [m-1]$ such that
$\widehat{\Dt}_{m-1}\cap \Dt_m\subseteq \Delta_{i^*}$.
For $i = 1, \ldots, m-1$, let
\[
\tilde f_i  \coloneqq
\left\{\begin{array}{ll}
f_i,  & \enspace   \mathrm{~if~} i\neq i^*,\\
f_i+p, & \enspace \mathrm{~if~} i=i^*.
\end{array}
\right.
\]
Then $\tilde f_i\in \re[x_{\Delta_i}]$ for all $i = 1, \ldots, m-1$
and the minimum value of $\tilde f_1+\cdots+\tilde f_{m-1} $
on $K_{\widehat{\Dt}_{m-1}}$ is  zero.
By the induction hypothesis, there exist polynomials $\tilde{q}_i  \in \re[\xdt{i}]$ such that
\be  \label{assumpequivqm-1}
\left\{
\begin{gathered}
\tilde q_1  + \cdots + \tilde q_{m-1}   =0 , \\
\tilde f_i + \tilde q_i \ge 0~ \text{on} ~ K_{\Dt_i},\,
i=1\ddd m-1 .
\end{gathered}\right.
\ee
Let $q_m\coloneqq-p-f_{\min}$,
and for each $i = 1, \ldots, m-1$, let
$$
 q_i \coloneqq \left\{\begin{array}{ll}
\tilde q_i,  & \enspace   \mathrm{~if~} i\neq i^*,\\
\tilde q_i+p, & \enspace \mathrm{~if~} i=i^*.
\end{array}\right.
$$
Then, we have $q_i \in \re[x_{\Delta_i}]$ for every $i\in [m]$ and
\[\left\{\begin{gathered}
q_1+\cdots+q_m=\tilde q_1+\cdots+\tilde q_{m-1}-f_{\min},\\
f_m+q_m=f_m-p-f_{\min},\\
f_i+q_i =\tilde f_i+\tilde q_i,\enspace   i = 1, \ldots, m-1.
\end{gathered}\right. \]
Therefore, \eqref{eq:m2blockm} and~\eqref{assumpequivqm-1} imply that
\be  \label{assumpequivqm}
\left\{
\begin{gathered}
 q_1  + \cdots +  q_{m} +f_{\min}   =0 , \\
 f_i +  q_i \ge 0~ \text{on} ~ K_{\Dt_i}, \, i=1\ddd m .
\end{gathered}\right.
\ee
This completes the proof.
\end{proof}

\subsection{Proof of Theorem~\ref{thm:schm:finitevariety}}
\label{pf_thm:schm:finitevariety}

\begin{proof}(Proof of Theorem~\ref{thm:schm:finitevariety})
(i) As we have shown in Theorem~\ref{tm:finite_sosc}(i),
the minimum value of \reff{eq:split_pop} is $0$.
Since $K_{\Dt_i}$ is finite, there exists a degree $N_i$ such that for all $\eps > 0$,
\[
f_i + p_i+ \epsilon \in \idea_{\Dt_i}[h_i]_{2N_i}
+ \mbox{Pre}_{\Dt_i}[g_i]_{2N_i}.
\]
This is shown in the proof of Theorem 4.1 of \cite{Nie13Finite}.
Note that
\begin{align*}
	\sum_{i=1}^m\left(f_i + p_i + \epsilon \right)
	= f + m\epsilon - f_{\min}  +\Big (\sum_{i=1}^mp_i + f_{\min} \Big ).
\end{align*}
Let $N \coloneqq \max\left\{N_1\ddd N_m \right\}.$
Then, for all $\eps>0$, we have
\begin{eqnarray*}
	f - (f_{\min} - m \eps) &=& \sum_{i=1}^m\left(f_i + p_i + \epsilon \right) -
	\Big ( \sum_{i=1}^mp_i + f_{\min} \Big ) \\
	&\in& \ideal{h}_{spa, 2N} + \pre{g}_{spa, 2N}.
\end{eqnarray*}
This implies that $\gm = f_{\min} - m\eps$ is feasible for
\reff{eq:sch_sos} with the order $k=N$. Hence, for all $\eps>0$,
\[
f_{\min}-m\eps \le f_{N}^{smg} \le f_{\min}.
\]
This forces $f_{N}^{smg} = f_{\min}$, so
$f_{k}^{smg} = f_{\min}$ for all $k\ge N$.

\smallskip \noindent
(ii) Suppose $y^*$ is a minimizer of \reff{eq:sch_mom}.
In the same way as for Theorem~\ref{tm:finite_sosc}(ii),
we can show that each $y^*_{\Dt_{i}}$ is a minimizer of the moment relaxation
\be\label{eq:smgKi}
\left\{ \baray{cll}
\min &  \langle f_i + p_i, y_{\Dt_{i}} \rangle \\
\st   &  \mathscr{V}_{h_i}^{\Dt_{i}, 2k}[y_{\Dt_{i}}] = 0 \, (i \in [m]),  \\
&  L_{g_{i,J}}^{\Dt_{i}, k}[y_{\Dt_{i}}] \succeq 0 \,
(i \in [m], \, J \subseteq [s_i]), \\
&  (y_{\Dt_{i}})_0=1, \ y_{\Dt_{i}} \in \re^{\N^{\Dt_{i}}_{2k}}.
\earay \right.
\ee
Since each $K_{\Dt_i}$ is finite, by \cite[Theorem~5.6.7]{nie2023moment},
we know $y^*_{\Dt_i}$ satisfies the flat truncation \reff{FT_smg} when $k$ is large enough.
The remaining part follows from Theorem~\ref{optmin}.

\smallskip \noindent
(iii) Note that $\mathcal{X}_{\Dt_i} \subseteq K_{\Dt_i}$,
so $\mathfrak{p}_{ij}(\mathcal{X}_{\Dt_i})$ is also a finite set.
Then, by the same proof as for Theorem~\ref{tm:finite_sosc}(iii),
we can get \reff{FTyDt_ij} and \reff{dec_5.7}.
\end{proof}

%
%

\section{Conclusions and Discussions}
\label{sc:conclusion}

This paper studies the sparse Moment-SOS hierarchy of relaxations
\reff{eq:spa_sos}-\reff{eq:spa_mom}
for solving sparse polynomial optimization problem \reff{sparse:pop}.
We show that this sparse Moment-SOS hierarchy is tight if and only if
\reff{eq:p+fmin_in_idl} or \reff{eq:sep_pi_eps} holds, i.e.,
the objective can be equivalently written as a sum of sparse nonnegative polynomials,
each of which belongs to the sum of the ideal and quadratic module
generated by the corresponding constraints.
Under Assumption~\ref{eq:f+p>=0}, we give some sufficient conditions
for this sparse hierarchy to be tight: optimality conditions for
\reff{eq:split_pop} or finiteness of individual constraining sets.
We also prove some conditions for Assumption~\ref{eq:f+p>=0} to hold.
In particular, we show that the sparse Moment-SOS hierarchy is tight
under some convexity assumptions.

Here are some interesting questions for future work.
\bit

\item
When the RIP holds, if $K$ is a finite set but each $K_{\Dt_i}$ may not,
is the Schm\"{u}dgen-type sparse hierarchy
of \reff{eq:sch_sos}-\reff{eq:sch_mom} always tight?

\item
Does Assumption~\ref{eq:f+p>=0} hold when
$f_i(\xdt{i})$, $g_i(\xdt{i})$, $h_i(\xdt{i})$ are generic polynomials?

\item
When the RIP holds, does the sparse Positivstellensatz hold?
That is, when the RIP holds and $K=\emptyset$,
do there exist $\sig \in \pre{g}_{spa}$ and
$\phi \in \ideal{h}_{spa}$ such that (\ref{positiv}) holds?

\eit

\subsection*{Acknowledgements}

This project was begun at a SQuaRE at the American Institute of Mathematics (AIM).
The authors would like to thank AIM for providing
a supportive and mathematically rich environment.
Jiawang Nie and Linghao Zhang are partially supported by
the National Science Foundation grant DMS-2110780.
Zheng Qu is partially supported by the Research Center for Intelligent Operations Research
at Department of Applied Mathematics, the Hong Kong Polytechnic University.
Xindong Tang is partially supported by the Hong Kong Research Grants Council HKBU-15303423.

\end{document}